\pgfplotsset{compat=1.15}
\numberwithin{equation}{section}
\def\@tocline#1#2#3#4#5#6#7{\relax
  \ifnum #1>\c@tocdepth 
  \else
    \par \addpenalty\@secpenalty\addvspace{#2}%
    \begingroup \hyphenpenalty\@M
    \@ifempty{#4}{%
      \@tempdima\csname r@tocindent\number#1\endcsname\relax
    }{%
      \@tempdima#4\relax
    }%
    \parindent\z@ \leftskip#3\relax \advance\leftskip\@tempdima\relax
    \rightskip\@pnumwidth plus4em \parfillskip-\@pnumwidth
    #5\leavevmode\hskip-\@tempdima
      \ifcase #1
       \or\or \hskip 1em \or \hskip 2em \else \hskip 3em \fi%
      #6\nobreak\relax
    \hfill\hbox to\@pnumwidth{\@tocpagenum{#7}}\par
    \nobreak
    \endgroup
  \fi} 
\title[]{On $1/2$ estimate for global Newlander-Nirenberg theorem}           
\author[]{Ziming Shi}
\address{Department of Mathematics,
	Southern University of Science and Technology, ShenZhen, China}
\email{shizm@sustech.edu.cn}
\keywords{} 
\subjclass[2020]{32Q40, 32Q60, 32T15} 
\newcommand{\dist}{\operatorname{dist}}
\newcommand{\supp}{\operatorname{supp}}
\newcommand{\Span}{\operatorname{span}}
\newcommand{\lip}{\operatorname{Lip}}
\newtheorem{thm}{Theorem}[section]
\newtheorem{cor}[thm]{Corollary} 
\newtheorem{prop}[thm]{Proposition}
\newtheorem{lemma}[thm]{Lemma}
\newcommand{\fl}[1]{\lfloor #1\rfloor}
\theoremstyle{definition}
\newtheorem{defn}[thm]{Definition}
\newtheorem{exmp}[thm]{Example}
\newtheorem{ques}[thm]{Question}
\newtheorem{note}[thm]{Notation}
\theoremstyle{remark}
\newtheorem{rem}[thm]{Remark}
\newtheorem*{clm}{Claim}
\newtheorem*{ack}{Acknowledgment}
\renewcommand{\th}[1]{\begin{thm}\label{#1}}
	\renewcommand{\eth}{\end{thm}}
\newcommand{\co}[1]{\begin{cor}\label{#1}}
	\newcommand{\eco}{\end{cor}}
\newcommand{\pr}[1]{\begin{prop}\label{#1}}
	\newcommand{\epr}{\end{prop}}
\newcommand{\df}[1]{\begin{defn}\label{#1}}
	\newcommand{\edf}{\end{defn}}
\newcommand{\ex}[1]{\begin{exmp}\label{#1}} 
	\newcommand{\eex}{\end{exmp}}
\newcommand{\qu}[1]{\begin{ques}\label{#1}}
	\newcommand{\equ}{\end{ques}}  
\newcommand{\mk}{\begin{rem}}
	\newcommand{\emk}{\end{rem}}
\newcommand{\cl}{\begin{clm}}
	\newcommand{\ecl}{\end{clm}} 
\newcommand{\ac}{\begin{ack}}
	\newcommand{\eac}{\end{ack}} 
\newcommand{\ga}{\begin{gather}}
\newcommand{\ega}{\end{gather}}
\newcommand{\gan}{\begin{gather*}}
\newcommand{\egan}{\end{gather*}}
\newcommand{\al}{\begin{gngn}}
	\newcommand{\eal}{\end{align}}
\newcommand{\aln}{\begin{align*}}
\newcommand{\ealn}{\end{align*}}
\newcommand{\eq}[1]{\begin{equation}\label{#1}}
\newcommand{\eeq}{\end{equation}}
\newcommand{\pa}{\partial{}}
\newcommand{\na}{\nabla}
\newcommand{\db}{\dbar}
\newcommand{\we}{\wedge}
\newcommand{\sm}{\setminus}
\newcommand{\seq}{\subseteq}
\newcommand{\pp}[2]{\frac{\partial #1}{\partial #2}}
\newcommand{\Z}{\mathbb{Z}}
\newcommand{\R}{\mathbb{R}} 
\newcommand{\C}{\mathbb{C}}
\newcommand{\N}{\mathbb{N}}
\newcommand{\U}{\mathcal{U}} 
\newcommand{\mf}{\mathfrak}
\newcommand{\0}{\mathbf{0}} 
\newcommand{\1}{\mathbf{1}}
\newcommand{\ov}{\overline}
\newcommand{\ti}{\tilde}
\newcommand{\wti}{\widetilde}
\newcommand{\hht}{\widehat}
\newcommand{\dbar}{\overline\partial}
\newcommand{\all}{\alpha}
\newcommand{\del}{\delta}
\newcommand{\var}{\varphi}
\newcommand{\e}{\epsilon}
\newcommand{\ve}{\varepsilon}
\newcommand{\om}{\omega}
\newcommand{\Om}{\Omega}
\newcommand{\thh}{\theta}
\newcommand{\La}{\Lambda}
\newcommand{\la}{\lambda}
\newcommand{\gm}{\gamma}
\newcommand{\si}{\sigma}
\newcommand{\yh}{\frac{1}{2}}
\newcommand{\re}[1]{(\ref{#1})}
\newcommand{\rl}[1]{Lemma~\ref{#1}}
\newcommand{\rc}[1]{Corollary~\ref{#1}}
\newcommand{\rp}[1]{Proposition~\ref{#1}}
\newcommand{\rt}[1]{Theorem~\ref{#1}}
\newcommand{\rd}[1]{Definition~\ref{#1}}
\newcommand{\nn}{\nonumber}
\newcommand{\nid}{\noindent}
\newcounter{pp}
\newcommand{\bpp}{\begin{list}{$\hspace{-1em}\alph{pp})$}{\usecounter{pp}}}
	\newcommand{\epp}{\end{list}}
\newcounter{ppp}
\newcommand{\bppp}{\begin{list}{$\hspace{-1em}(\roman{ppp})$}{\usecounter{ppp}}}
	\newcommand{\eppp}{\end{list}}
\newcommand{\Ac}{\mathcal{A}}
\newcommand{\Dc}{\mathcal{D}}
\newcommand{\Bs}{\mathscr{B}}
\newcommand{\Cf}{\mathfrak{C}}
\newcommand{\Df}{\mathfrak{D}}
\newcommand{\Ec}{\mathcal{E}}
\newcommand{\Gf}{\mathfrak{G}}
\newcommand{\Hc}{\mathcal{H}}
\newcommand{\Kb}{\mathbb{K}}
\newcommand{\Kc}{\mathcal{K}}
\newcommand{\Sc}{\mathcal{S}}
\newcommand{\Ss}{\mathscr{S}}
\newcommand{\Tc}{\mathcal{T}}
\newcommand{\Uc}{\mathcal{U}}
\begin{document} 
	\definecolor{rvwvcq}{rgb}{0.08235294117647059,0.396078431372549,0.7529411764705882}
\maketitle  

\begin{abstract} 
  Given a formally integrable almost complex structure $J$ defined on the closure of a bounded domain $D \subset \mathbb C^n$, and provided that $J$ is sufficiently close to the standard complex structure, the global Newlander-Nirenberg problem asks whether there exists a global diffeomorphism defined on $\overline D$ that transforms $J$ into the standard complex structure, under certain geometric and regularity assumptions on $D$. In this paper we prove a quantitative result of this problem. Assuming $D$ is a strictly pseudoconvex domain in $\mathbb C^n$ with $C^2$ boundary, and that the almost complex structure $J$ belongs to the H\"older-Zygmund class $\Lambda^r(\overline D)$ for $r>\frac{3}{2}$, we show the existence of a global diffeomorphism (independent of $r$) in the class $\Lambda^{r+\frac12-\varepsilon}(\overline D)$, for any $\varepsilon>0$. 
\end{abstract} 
\vspace{1cm} 
	
\tableofcontents

\medskip   
\section{Introduction}
Let $M$ be a real differentiable manifold. An almost complex structure near a point $p$ in $M$ is defined as a tensor field $J$, which is, at every point in some neighborhood $U$ of $p$, an endomorphism of the tangent space $T_x(M)$ such that $J^2 = -I$, where $I$ denotes the identity transformation of $T_x(M)$. Consequently there exists a decomposition of the complexified tangent bundle into the $+i$ and $-i$ eigenspaces of $J$: $\C TM = \Sc^+_J \oplus \Sc^-_J$. If $M$ is a complex manifold with local holomorphic coordinate chart $\{ U^k, z^k \}$, then the complex structure on $M$ gives rise to the standard almost complex structure $J_{st}$, defined by $S^+_{J_{st}} = \Span 
\{ \pp{}{z^k_\all} \}_{\all=1}^n $ on $U^k$. 
Conversely, given an almost complex structure $J$ on $M$, one wants to know whether $J$ is induced by the complex structure on $M$, in other words, whether there exists a diffeomorphism $z_\all \to w_\beta$ so that in the new coordinate $S^+_J= \Span \left\{ \pp{}{w_\all} 
\right\}_{\all=1}^n$. In this case we say that $J$ is integrable, and $\{ w_\all  \}_{\all=1}^n$ is a holomorphic coordinate chart compatible with $J$. 

We call an almost complex structure $J$ formally integrable if $S^+_J$ is closed under the Lie bracket $[\cdot, \cdot] $ (An equivalent condition is the vanishing of the Nijenhuis tensor.) In particular, an integrable structure is formally integrable. 
The classical local Newlander-Nirenberg theorem asserts that the converse is also true locally, i.e. for a formally integrable almost complex structure $J$ defined near an interior point of $M$, there exists a local holomorphic coordinate system in a neighborhood of $p$ which is compatible with $J$.  

For real analytic $J$, the local Newlander-Nirenberg theorem follows easily from the analytic Frobenius theorem. However if $J$ is only $C^\infty$ or less regular, the proof becomes much more difficult. The reader may refer to the work of Newlander-Nirenberg \cite{N-N57}, Nijenhuis-Woolf \cite{N-W63}, Malgrange \cite{Mal69} and Webster \cite{We89}. We point out that Webster's proof yields the sharp regularity result in the H\"older space, namely, if $J$ is $C^{k+\all} $ for $k \geq 1$ and $\all \in (0,1)$ in a neighborhood of a point $p$,   then there exists a local diffeomorphism near $p$ 
of the class $C^{k+1+\all}$ such that the new coordinate is compatible with $J$.   

We now consider the analogous global problem. Suppose a formally integrable almost complex structure $J$ is defined on the closure of a relatively compact subset $D$ in a complex manifold $M$, such that $J$ is a small perturbation of $J_{st}$ (as measured by certain norms such as the H\"older or H\"older-Zygmund norm), one wants to know whether $J$ is integrable on $\ov D$, or equivalently, if there exists a global diffeomorphism on $\ov D$ inducing a holomorphic coordinate system compatible with $J$. Moreover, we are interested in the global regularity of such coordinate. 
We shall henceforth refer to this problem as the global Newlander-Nirenberg problem.  
 
Under the assumption that both the boundary $bD$ and the almost complex structure $J$ are $C^\infty$, Hamilton \cite{Ha77} proved the existence of a $C^\infty$ diffeomorphism on $\ov D$ under which the new coordinate is compatible with $J$, if $D$ satisfies 
1) $H^1(D, \Tc D) = 0$, where $\Tc$ stands for the holomorphic tangent bundle of $D$; and 2) the Levi form on $bD$ has either $n-1$ positive eigenvalues or at least two negative eigenvalues. There is also a local version of Newlander-Nirenberg theorem with boundary for strictly pseudoconvex hypersurface, due to Catlin \cite{Ca88} and Hanges-Jacobowitz \cite{H-J89}, independently. We note that all these results are carried out in the $C^\infty$ category (boundary, structure and the resulting diffeomoprhism are all $C^\infty$) using $\dbar$-Neumann-type methods. 
More recently, Gan and Gong \cite{G-G24} proved a global Newlander-Nirenberg theorem on a strictly pseudoconvex domain $D$ in $\C^n$ with $C^2$ boundary. Assuming $J \in \La^r(\ov D)$, $r>5$, they proved the existence of a diffeomorphism in the class $\La^{r-1}(\ov D)$, assuming that $|J-J_{st}|_{\La^r(\ov D)} < \del(r)$ for some sufficiently small $\del(r)$. Their method is based on the $\db$ homotopy formula, an approach originally pioneered by Webster in his proof of the classical (local) Newlander-Nirenberg theorem \cite{We89}.   

To formulate our results we first introduce some notations. Let $p \in M$, we identify an almost complex structure $J$ near $p$ by a set of vector fields $\{ X_{\ov \all} \}_{\all=1}^n$ such that
\[
  \text{ $X_{\ov 1}, \dots, X_{\ov n}, \ov{X_{\ov 1}}, \dots, \ov{X_{\ov n}}$ are linearly independent at $p$},  
\]
where $\{ X_{\ov \all} \}$ spans the eigenspace $S_J^-$ with eigenvalue $-i$. 
We can write  
\[
  X_{\ov \all} = a_{\ov \all}^\beta \pp{}{z_\beta} + b^\beta_{\ov \all} \pp{}{\ov z_\beta} .  
\]
Here we have used Einstein convention to sum over the repeated index $\beta$, and we shall adopt this convention throughout the paper. 
$\ov {X_{\ov \all}}$ denotes the complex conjugate of $X_{\ov \all}$ and $\{ \ov{X_{\ov \all} } \}_{\all=1}^n$ is a basis for the eigenspace $S_J^+$ with eigenvalue $i$.  

For two vector fields $V,W$ on $M$, we denote their Lie bracket as $[V, W] = VW-WV$. The formal integrability of $J$ on $\ov D$ means there exist functions $C^{\ov \gm}_{\ov \all \ov \beta}$ such that 
\[
  [X_{\ov \all}, X_{\ov \beta} ] = C^{\ov \gm}_{\ov \all \ov \beta} X_{\ov \gm}, \quad \text{for $\all, \beta= 1, \dots, n$} 
\]
everywhere on $\ov D$. 

By an $\R$ linear change of coordinates, we can transform $X_{\ov \all}$ to the form 
\[
  X_{\ov \all} = \pp{}{\ov z_\all} + A_{\ov \all}^\beta \pp{}{z_\beta}. 
\]
We now state our main result for the global Newlander-Nirenberg problem. We use the notation $a^-$ to mean that $a-\ve$ for any $\ve >0$. We use $\La^r(\ov D)$ or $|\cdot|_{D, r}$ to denote the H\"older-Zygmund norm on $D$ (See \rd{Def::H-Z} below).   
\begin{thm}  \label{Thm::intro_main_thm}  
Let $D$ be a domain in $\C^n$ with $C^2$ boundary that is strictly pseudoconvex with respect to the standard complex structure in $\C^n$, for $n \geq 2$. Let $3/2 < m \leq \infty$ and let $\{ X_{\ov \all} = \pp{}{\ov z_\all} + A_{\ov \all}^\beta \pp{}{z_\beta} \}_{\all=1}^n$ be $\La^m(\ov D)$ vector fields defining 
a formally integrable almost complex structure on $\ov D$. Let $\e_0, \ti \e_0$ be small positive constants such that $m > \frac{3}{2} + \ti \e_0$ and $0 < \e_0 < \ti \e_0$. 
There exists $\del_0= \del_0(D, |A|_{\frac32+ \ti \e_0}, \ti \e_0)>0$ such that if $|A|_{D,1+\e_0}  < \del_0$, then there exists an embedding $F$ of $\ov D$ into $\C^n$ such that $dF(X_{\ov 1}), \dots, dF(X_{\ov n})$ are in the span of $\left\{ \pp{}{\ov z_1}, \dots, \pp{}{\ov z_n} \right\} $.  
Furthermore, $F \in \La^{m+\yh^-}(\ov D)$ if $m < \infty$ and $F \in C^\infty(\ov D)$ if $m = \infty$. The constant $\del_0$ needs to converge to $0$ as $m \to \frac{3}{2}^+$ ($\ti \e_0 \to 0$) and can be chosen to be independent of all $m$ away from $\frac32$.  
\end{thm}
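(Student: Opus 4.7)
The plan is to construct $F$ as a perturbation of the identity via a Newton-type iterative scheme. Writing the putative new coordinates as $w^j = z^j + f^j$, the condition $X_{\ov\alpha} w^j = 0$ becomes the nonlinear $\dbar$-equation
\[
\dbar f^j = -A^j - A^{\beta}_{\ov\alpha}(\pa_{z_\beta} f^j)\, d\ov z^\alpha, \qquad A^j := A^j_{\ov\alpha}\, d\ov z^\alpha,
\]
and the formal integrability identity $[X_{\ov\alpha}, X_{\ov\beta}] = C^{\ov\gamma}_{\ov\alpha \ov\beta} X_{\ov\gamma}$ forces $\dbar A^j$ to be algebraically quadratic in $A$, so the right-hand side is $\dbar$-closed modulo terms that are quadratic in $(A, \nabla f)$. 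This is precisely the structure required for a Webster-style linearization to succeed.

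The crucial analytic input is a $\dbar$-homotopy formula $\varphi = \dbar \Hc_1 \varphi + \Hc_2 \dbar \varphi$ on the $C^2$ strictly pseudoconvex domain $D$ satisfying the tame gain estimate
\[
|\Hc_1 \varphi|_{D,\, r + \yh - \ve} \ls |\varphi|_{D,\,r}, \qquad r > 0,\ \ve > 0,
\]
with constants depending only on $D, r, \ve$. This $\yh$-derivative gain---strictly better than what Webster-type kernels deliver on a $C^2$ boundary, as used by Gan-Gong---is the source of the $\yh$ in the conclusion. I would either invoke or construct such an $\Hc_1$ from a Henkin-Ramirez integral representation adapted to $C^2$ defining functions; the $\ve$-loss naturally reflects the cost of smoothing the defining function before substituting it into the kernel.

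With this tool in hand, the iteration runs as follows. Set $\Psi_0 = \mathrm{id}$, and given $\Psi_k$ under which the structure has coefficients $A^{(k)}$, define $f^{(k+1)} := -\Hc_1 A^{(k)}$ and $\Psi_{k+1} := (\mathrm{id} + f^{(k+1)}) \circ \Psi_k$. Formal integrability together with the quadratic structure of the remainder produces the low-norm contraction $|A^{(k+1)}|_{D,1+\e_0} \ls |A^{(k)}|_{D,1+\e_0}^2$, which decays geometrically as soon as $|A|_{D,1+\e_0} < \hat\ve_0$. Moser-style interpolation between the low norm and the fixed-regularity norm, combined with tame estimates for $\Hc_1$, keeps $|A^{(k)}|_{D,m}$ growing only polynomially in $k$. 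The telescoping sum $\sum_k f^{(k+1)}$ then converges in $\La^{m+\yh-\ve}(\ov D)$ thanks to the $\yh$-gain at each step; each $\Psi_k$ remains a diffeomorphism since $|f^{(k+1)}|_{C^1} \ls |f^{(k+1)}|_{D,1+\e_0} \to 0$ geometrically; and the $C^\infty$ case follows by running the scheme with $m$ arbitrarily large and $\hat\ve_0$ chosen uniformly away from $\frac{3}{2}$.

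The hard part will be the three-norm juggling imposed by the hypotheses: smallness only in $\La^{1+\e_0}$, a finite but possibly large bound in $\La^{3/2 + \ti\e_0}$ (the threshold $m > \frac{3}{2}$ corresponding to $\La^m \hookrightarrow C^1$, which is what makes product and composition estimates work cleanly), and propagation of the target regularity $\La^{m+\yh-\ve}$. A secondary difficulty is that the pushforward $(\mathrm{id}+f)_* X_{\ov\alpha}$ requires inverting the Jacobian of $\mathrm{id}+f$ and composing with the old coefficients, generating a hierarchy of quadratic error terms; producing the sharp quadratic contraction in the low norm while simultaneously preventing the high norm from blowing up is the technical heart of the argument, and it is exactly the place where the $\yh$-gain beyond $\La^{3/2}$ is indispensable---each of the $k$ steps must buy back the single derivative lost to $\pa_{z_\beta}$ in the nonlinearity, and a full $\yh$ remains in reserve to push the coordinate past the regularity of $A$.
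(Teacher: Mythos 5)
Your scheme sets $f^{(k+1)} = -\Hc_1 A^{(k)}$ with no smoothing operator, and this is where the argument breaks down. Because $\Hc_1$ gains only $\tfrac12$ derivative while the error terms in the pushforward (in particular $A\,\pa f$ and $Q\dbar A = Q[A,\pa A]$) cost a full derivative, each step produces a new $A^{(k+1)}$ that lives in a space $\tfrac12$ derivative \emph{worse} than $A^{(k)}$. After roughly $2(m-1)$ iterations the coefficients drop below $C^1$ and the scheme cannot continue; the quadratic low-norm contraction you want to exploit is never reached. There is no way to "buy back the derivative lost to $\pa_{z_\beta}$" from the $\tfrac12$ reserve, because the reserve is exactly what is being spent and it runs out. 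This is the classic Nash--Moser loss-of-derivatives obstruction, and the paper addresses it precisely by inserting a smoothing operator: the map is $f_i = -\Ec_{D_i}S_{t_i}P_i A_i$, not $-\Hc_1 A_i$. The estimate $|S_t v|_{m+1} \ls t^{-1/2}|v|_{m+1/2}$ then converts the derivative loss into a controlled factor $t_i^{-1/2}$, and the smoothing error $(I-S_t)A$ contributes an extra \emph{linear} term $C_{r,s}t^{r-s}|A|_r$ to the low-norm estimate (\rp{Prop::diffeo_est}), so the actual recursion is not the pure quadratic $|A^{(k+1)}|_{1+\e_0}\ls |A^{(k)}|_{1+\e_0}^2$ you write down but $|A'|_s \le C t^{r-s}|A|_r + C t^{-1/2}|A|_s^2$ coupled with $|A'|_m \le C_m|A|_m$.

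Two secondary points. First, the paper's homotopy operator attains the full $\tfrac12$ gain, $\Hc_q : \La^r \to \La^{r+1/2}$ (\rt{Thm::dbar}); the $\ve$ loss in the final regularity $\La^{m+\frac12-\ve}$ comes not from the kernel but from the convex interpolation of norms used to balance the decaying low norm against the controlled-growth high norm, so building an $\ve$-loss into $\Hc_1$ misplaces where the loss originates. Second, the real technical content of the paper, absent from your sketch, is constructing the smoothing operators $S_t$ directly on a bounded Lipschitz domain via Littlewood--Paley theory (\rp{Prop::SmOp}) together with the commutator estimate $|[\pa_i,S_t]u|_s \ls t^{r-s}|u|_r$ (\rp{Prop::D_St_comm_est}); this commutator identity is what makes the high-order bound $|A_{i+1}|_m \le C_m|A_i|_m$ hold \emph{without} a $t_i^{-1/2}$ factor, and is the improvement over Gan--Gong that brings the threshold down from $m>5$ to $m>\frac32$. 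Without introducing $S_t$, neither the high-norm control nor the low-norm decay can be closed.
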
 

\rt{Thm::intro_main_thm} proves the almost $1/2$ gain in regularity for the global Newlander-Nirenberg problem on strictly pseudoconvex domains in $\C^n$. It is worthwhile to note that our result is achieved assuming only that the initial almost complex structure is a small perturbation in the $\La^{1+\e_0}(\ov D)$ norm from the standard structure $J_{st}$. This is a major improvement over the previous best known result in \cite{G-G24}, where one needs to assume the smallness of the  perturbation in the $\La^m(\ov D)$ norm in order to obtain a diffeomorphism in $\La^{m-1}(\ov D)$, for $m >5$.  

The constant $\del_0$ is lower stable under small $C^2$ perturbation of the domain (see Definition \ref{Def::upp_low_stable}.) As a consequence, we can also prove  the following local Newlander-Nirenberg theorem with boundary, improving the results of \cite{Ca88}, \cite{H-J89} and \cite{G-G24}. 
\begin{thm} \label{Thm::Loc_N-N} 
  Let $3/2 < m \leq \infty$. Let $U$ be a domain in $\C^n$ whose boundary contains a piece of $C^2$ strictly pseudoconvex real hypersurface $M$, and let $X_{\ov 1}, \dots, X_{\ov n} \in \La^m(U \cup M)$ vector fields defining a formally integrable almost complex structure on $U \cup M$. Then for each $p \in M$, there exists a diffeomorphism $F$ defined on a neighborhood $\om$ of $p$ in $U \cup M$ such that $dF(X_{\ov 1}), \dots, dF(X_{\ov n})$ are in the span of $\left\{ \pp{}{\ov z_1}, \dots, \pp{}{\ov z_n} \right\} $, and $F(\om \cup M)$ is strictly pseudoconvex. Furthermore, $F \in \La^{m+\yh^-}(\ov \om)$ if $m <\infty$ and $F \in C^\infty(\ov \om)$ if $m = \infty$. 
\end{thm}

We now describe our method, which is a modified form of the KAM type argument of \cite{We89} and \cite{G-G24}. Given $D=D_0$ with an initial almost complex structure $J_0$, we look for a succession of diffeomorphisms $\{F_i \}_{i=1}^\infty$ that maps $D_{i}$ with structure $J_i$ to a new domain $D_{i+1}$ with structure $J_{i+1}$. During the iteration, the perturbation $\|A_i\|_{C^0(\ov D_i)} = \|J_i-J_{st}\|_{C^0(\ov D_i)}$ converges to $0$ while each $D_i$ remains a strictly pseudoconvex domain with $C^2$ boundary. 
The sequence of the domains $D_i$ converges to a limiting domain $D_\infty$, while the sequence of diffeomorphisms $\wti F_j = F_j \circ F_{j-1} \cdots \circ F_1$ converges to a diffeomorphism $F:D_0 \to D_\infty$ with $dF (J_0) = J_{st}$. 
For each $i$, the map $F_i$ is constructed by applying a $\dbar$ homotopy formula $A_i = \dbar P_i A_i + Q_i \dbar A_i$ on $D_i$. In Webster's proof for the classical Newlander-Nirenberg theorem, only a local homotopy formula is needed, namely the Bochner-Martinelli-Koppelman formula. The operator $P_i$ gains one derivative, and as a result there is no loss of derivative for the almost complex structure at each iteration step. 
Using the integrability condition, Webster is able to obtain the rapid convergence of the perturbation for all derivatives: $|A_{i+1}|_{C^{k+\all}(\ov D)} \leq |A_i|^2_{C^{k+\all}(\ov D)}$ for any positive integer $k$ and $\all \in (0,1)$. 

For our problem, we need to apply a global homotopy formula on $\ov D$. The associated operators $P_i, Q_i$ gain only $1/2$ derivative up to boundary, which amounts to a loss of $1/2$ derivative for the new almost complex structure after each iterating diffeomorphism, and thus the iteration will break down after finite steps. 
To avoid the loss of derivatives, Gan and Gong \cite{G-G24} applied a Nash-Moser type smoothing operator 
at each step. In this case they show that the lower-order norms $|A_i|_{\La^s(\ov {D_i})}$ converges to $0$ for $s \in (2,3)$, while the higher-order norm $|A_i|_{\La^r(\ov {D_i})}$ blows up, for all sufficiently large $r$. By using the convexity of H\"older-Zygmund norms (interpolation), they can then show that the intermediate norms $|A_i|_{D_i,m}$ converges to $0$, for $ s < m <r$. In the iteration scheme of \cite{G-G24}, the higher-order norm blows up like $|A_{i+1}|_r \leq C_r t_i^{-\yh}|A_i|_r$, $r >5$, where $t_i$ is the parameter in the smoothing operator that tends to $0$ in the iteration. In our case, we modify their method by using a different diffeomorphism $F_i$  which allows us to prove the estimate 
\begin{equation} \label{intro_r_norm_est} 
  |A_{i+1}|_{\La^r(\ov D_{i+1})} \leq C_r |A_i|_{\La^r(\ov D_i)}, \quad r > 1.  
\end{equation} 
To achieve the above goal, we construct a family of smoothing operators $\{S_t\}_{t>0}$ acting on functions defined on a bounded Lipschitz domain and satisfying certain bounds in H\"older-Zygmund space, thus avoiding the use of the extension operator required for smoothing in \cite{G-G24}. For our construction we use Littlewood-Paley functions, which is a convenient tool since the H\"older-Zygmund norm $\La^r$ is equivalent to the Besov norm $\mathscr{B}^r_{\infty,\infty}$. 

The key feature in our estimate of $A_i$ is the nice property enjoyed by the commutator $[\na, S_t] = \na S_t - S_t \na$, which replaces the role of the commutator $[\na, E]$ in \cite{G-G24}, $E$ being some extension operator. 
Our estimate roughly states that if $u \in \La^r(\ov \Om)$, then for 
any $ 0< s <r$, the $\La^s(\ov \Om)$ norm of $[\na, S_t]$ tends to $0$ like $t^{r-s}$ (as $t \to 0)$.

It is plausible to conjecture that one should be able to gain exactly $1/2$ derivative in regularity for the global Newlander-Nirenberg problem, in view of the corresponding $1/2$ gain for the regularity of the $\dbar$ equation on strictly pseudoconvex domains. However, our method necessarily incurs a loss of arbitrarily small $\ve$ in smoothness, due to the use of the convex interpolation of norms. One could also ask whether the assumption that $J \in \La^r(\ov D)$, for $r>3/2$ can be replaced by 
$J \in C^1(\ov D)$ or $J \in \La^r(\ov D)$ for $r >1$. In our proof, the index $3/2$ comes from the need to control the $C^2$ norm and the Levi form of each iterating domain so that the domains remain strictly pseudoconvex.  

This non-linear, dynamical method of proving the Newlander-Nirenberg theorem using $\db$ homotopy formula 
was originated by Webster; it has found powerful applications in the CR vector bundle problem and the more difficult local CR embedding problem. See for example Gong-Webster \cite{G-W10,G-W11,G-W12} where they used such methods to obtain several sharp estimates on these problems. We also mention the paper by Polyakov \cite{Pol04} which uses similar techniques for the CR Embedding problems for compact regular pseudoconcave CR submanifold. 

The paper is organized as follows. In Section 2, we collect some basic properties of the H\"older Zygmund space $\La^s$. In particular we recall the Littlewood-Paley characterization of $\La^s$ using the Besov norm $\Bs^s_{\infty,\infty}$. In Section 2.2 we construct the important Moser-type smoothing operator on bounded Lipschitz domains. We also prove the commutator estimate for $[\dbar, S_t]$ in \rp{Prop::D_St_comm_est}, which plays a key role for estimating the perturbation in the iteration. 
In Section 3 we derive for each iteration the estimates for the lower and higher order norms of the new perturbation $A_{i+1}$ in terms of the norms of previous perturbation $A_i$. This constitutes the main estimates of the paper.  
In Section 4 we set up the iteration scheme using estimates from Section 3 and induction arguments, and we establish the convergence of the lower order norms and the blow up of higher order norms. Finally, we use convexity of norms to show that the composition of the iterating maps converges to a limiting diffeomorphism in suitable function spaces.

We now fix some notations used in the paper. We will often write $\pp{}{z_\all}$ as $\pa_{\all}$ and $\pp{}{\ov z_\beta}$ as $\pa_{\ov \beta}$, and we use $A \lesssim B$ to mean that there exists a constant $C$ independent of $A,B$ such that $A \leq CB $. For a map $f: \R^d \to \R^d$, we denote its Jacobian matrix by $Df = \left[ \pp{f^i}{x_j} \right]_{1 \leq i,j \leq d}$. The set of integers is denoted by $\Z$, and the set of natural numbers $\{0,1,2,3,\dots \}$ is denoted by $\N$.  

\begin{ack}
  The author would like to thank Liding Yao and Xianghong Gong for helpful discussions. 
\end{ack}

\bigskip  

\section{Preliminaries}  

\subsection{Function spaces}   
In this section, we recall some basic results for the H\"older space $C^r(\Om)$, $0 \leq r < \infty$, and the H\"older-Zygmund space $\La^r(\Om)$, $0 < r < \infty$.   

\begin{note}
   For simplicity we write the H\"older norm $\| \cdot \|_{C^r(\Om)}$ as $\| \cdot \|_{\Om,r}$ and $\| \cdot\|_{\La^r(\Om)}$ as $| \cdot |_{\Om,r}$. 
\end{note}
We now define the notion of special and bounded Lipschitz domains. 
\begin{defn} \label{Def::Lip_dom}  
    A \emph{special Lipschitz domain} is an open set $\om \subset \R^d$ of the form $\omega=\{(x',x_d):x_d>\rho(x')\}$ with $\|\nabla\rho\|_{L^\infty}< 1$. A \emph{bounded Lipschitz} domain is a bounded open set $\Om$ whose boundary is locally the graph of some Lipschitz function. In other words, $b \Om = \bigcup_{\nu=1}^M U_\nu$, where for each $1 \leq \nu \leq M $, there exists an invertible linear transformation $\Phi_\nu:\R^d \to \R^d$ and a special Lipschitz domain $\om_\nu$ such that 
    \[
      U_\nu \cap \Om = U_\nu \cap \Phi_\nu (\om_\nu). 
    \]
Fix such covering $\{ U_\nu \}$, we define the \emph{Lipschitz norm of $\Om$ with respect to $U_\nu$, denoted as $\lip_{U_\nu}(\Om)$}, to be $\sup_{\nu} \| D \Phi_\nu\|_{C^0}$.  
\end{defn}
For a bounded Lipschitz domain $\Om$, the following H\"older estimates for interpolation, product rule and chain rule are well-known. See for instance \cite{Ho76}. 
\begin{gather*}
  \| u \|_{\Om, (1-\thh)a + \thh b} \leq C_{a,b} \| u \|_{\Om,a}^{1-\thh} \| u \|_{\Om,b}^\thh, \quad 0 \leq \thh \leq 1; 
  \\  
\| u v \|_{\Om,a} \leq C_a ( \| u \|_{\Om,a} \| v \|_{\Om,0} + \| u \|_{\Om,0} \| v \|_{\Om,a} ) 
\\ 
\| u \circ \var \|_{\Om,a} \leq C_a \left( \| u \|_{\Om',a} \| \var \|_{\Om,1}^a + \| u \|_{\Om',1} \| \var \|_{\Om,a} + \| u \|_{\Om',0} \right). 
\end{gather*}
Here $a,b \geq 0$ and $\var: \Om \to \Om'$ is a $C^1$ map between two bounded Lipschitz domains $\Om,\Om'$. 
\begin{defn}[H\"older-Zygmund space] \label{Def::H-Z} 
The H\"older-Zygmund space on $\R^d$, denoted by $\La^s(\R^d)$ for $s\in\R^+$ is defined as follows
\begin{itemize}
    \item For $0<s<1$, $\La^s(\R^d)$ consists of all $f\in C^0(\R^d)$ such that $\|f\|_{\La^s(U)}:=\sup \limits_{\R^d} |f|+\sup\limits_{x,y\in \R^d, \, x \neq y} \frac{|f(x)-f(y)|}{|x-y|^s}<\infty$.
    \item $\La^1(\R^d)$ consists of all $f\in C^0(\R^d)$ such that $\|f\|_{\La^1(\R^d)}:=\sup\limits_{\R^d} |f|+\sup\limits_{x,y\in \R^d, \, x \neq y}\frac{|f(x)+f(y)-2f(\frac{x+y}2)|}{|x-y|}<\infty$.
    \item For $s>1$ recursively, $\La^s(\R^d)$ consists of all $f\in \La^{s-1}(\R^d)$ such that $\nabla f\in\La^{s-1}(\R^d)$. We define $\|f\|_{\La^s(\R^d)}:=\|f\|_{\La^{s-1}(\R^d)}+\sum_{j=1}^d \|D_j f\|_{\La^{s-1}(\R^d)}$.
    \item We define $C^\infty(\R^d):=\bigcap_{s>0} \La^s(\R^d)$ to be the space of bounded smooth functions.
\end{itemize}
\end{defn} 
\begin{defn}
Let $\Om \subset \R^d$ be a bounded Lipschitz domain. The H\"older-Zygmund space on $\Om$, denoted by $\La^s(\Om)$ for $s>0$,  is defined as 
$ \La^s (\Om) = \{ f: \exists \: \wti f \in \La^s (\R^d) \;  s.t. \; \wti f|_\Om = f \}$ equipped with the norm: 
\[
  |f|_{\La^s(U)}:= \inf_{\wti f\in \La^s (\R^d), \: \wti f|_\Om =f} | \wti f|_{\La^s (\R^d)}. 
\] 
\end{defn} 
\begin{rem}
  There is an intrinsic definition for the space $\La^s(\Om)$, namely, one which requires only that $f$ is defined in $\Om$, rather than assuming $f$ is the restriction of a function defined on the whole space. We will not use this definition in this paper. The interested reader can refer to \cite[Section 5]{Gong25}.      
\end{rem}

Similar to that of the H\"older norm, the following estimates for the H\"older-Zygmund norm:
\begin{lemma}{\cite[Lemma 3.1]{G-G24}}  \label{Lem::H-Z} 
 Let $\Om,\Om'$ be connected bounded Lipschitz domains and let $\var$ maps $\Om$ into $\Om'$. Suppose that $\| \var \|_{\Om,1} < C$. Then we have 
\begin{gather}  \label{H-Z_convexity}
 |u|_{\Om,(1-\thh) a + \thh b } \leq C_{a,b,\Om} |u|_{\Om,a}^{1-\thh} |u|_{\Om,b}^\thh, \quad 0 \leq \thh \leq 1, \quad a,b>0. 
  \\  \label{H-Z_product}
  |uv|_{\Om,a} \leq C_a (|u|_{\Om,a} \| v \|_{\Om,\ve} + \| u \|_{\Om,\ve} |v|_{D,a}), \quad a > 0; 
  \\ \nn 
  |u \circ \var|_{\Om,1} \leq C_{D, \Om'} |u|_{\Om',1} (1+ C_{1/\ve} \| \var \|_{\Om,1+\ve}^{\frac{1}{1+\ve}});  
  \\ \label{chain_rule_a>1}  
 |u \circ \var|_{\Om,a} \leq C_{a,\Om,\Om'}[C_{1/\ve} |u|_{\Om',a} \| \var \|_{\Om,1+\ve}^{\frac{1 + 2 \ve}{1+\ve}} + C_{1/\ve} \| u \|_{\Om',1+\ve} |\var|_{\Om,a} + \|u \|_{\Om',0}), \quad a > 1. 
\\ \label{chain_rule_a<1}  
|u \circ \var|_{\Om,a} \leq |u|_{\Om',a} \| \var \|_{\Om,1}^\all, \quad 0 < a < 1.  
\end{gather}
Here $C_{1/\ve}$ is a positive constant depending on $\ve$ that tends to $\infty$ as $\ve \to 0$. 
\end{lemma}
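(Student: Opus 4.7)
The plan is to reduce all four inequalities to $\R^d$ by means of a Rychkov-type extension operator $E: \La^s(D) \to \La^s(\R^d)$, which exists for any connected bounded Lipschitz domain with operator norm bounded in terms of $\lip(D)$ and $s$. Once on $\R^d$, the H\"older-Zygmund space $\La^s$ is equivalent to the Besov space $\Bs^s_{\infty,\infty}$, whose norm is $\sup_{j\geq 0} 2^{js}\|\Delta_j u\|_{L^\infty}$ for a standard Littlewood-Paley partition of unity $\{\Delta_j\}$. I would prove each inequality on $\R^d$ via this Besov characterization and then restrict back to $D$, absorbing the extension norms into the constants $C_{a,D}$, $C_{a,D,D'}$.

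For the convexity inequality \eqref{H-Z_convexity}, I would argue block by block: with $s=(1-\thh)a+\thh b$,
\[
2^{js}\|\Delta_j u\|_{L^\infty} = \bigl(2^{ja}\|\Delta_j u\|_{L^\infty}\bigr)^{1-\thh}\bigl(2^{jb}\|\Delta_j u\|_{L^\infty}\bigr)^{\thh} \leq |u|_{D,a}^{1-\thh}|u|_{D,b}^{\thh},
\]
and take the supremum in $j$. For the product rule \eqref{H-Z_product}, I would use the Bony paraproduct decomposition $uv=T_uv+T_vu+R(u,v)$, bounding each piece in $\Bs^a_{\infty,\infty}$ by an $L^\infty$ norm times the appropriate $\Bs^a$ norm; the $\|\cdot\|_{D,\ve}$ slot with a small $\ve>0$ is inserted to convert the $L^\infty$ factor into a norm that controls low-frequency remainders at integer orders, where $\La^a \neq C^a$.

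The low-regularity chain rule \eqref{chain_rule_a<1} for $0<a<1$ is immediate from the difference-quotient definition: since $|\var(x)-\var(y)| \leq \|\var\|_{D,1}|x-y|$, the seminorm of $u\circ\var$ is bounded by $|u|_{D',a}\|\var\|_{D,1}^a$. For \eqref{chain_rule_a>1} with $a>1$, I would induct on $\fl{a}$, writing $\na(u\circ\var)=((\na u)\circ\var)\,D\var$ and applying the product rule \eqref{H-Z_product} at order $a-1$, together with the inductive hypothesis (or \eqref{chain_rule_a<1} on the fractional part) for the composed factor $(\na u)\circ\var$. The exponent $\tfrac{1+2\ve}{1+\ve}$ on $\|\var\|_{D,1+\ve}$ and the decoupled term $\|u\|_{D',1+\ve}|\var|_{D,a}$ then emerge from applying the convexity \eqref{H-Z_convexity} to interpolate $\|\var\|_{D,1+\ve}$ against $|\var|_{D,a}$, with the interpolation constant on the borderline producing the $\ve$-dependent factor $C_{1/\ve}$.

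The main obstacle I expect is the chain rule at $a>1$: one must carefully distribute derivatives between $u$ and $\var$ in a Fa\`a di Bruno-type expansion while controlling both the composition $(\na u)\circ\var$ at low regularity and the borderline integer norms through an $\ve$-absorption, which is exactly the mechanism producing the constants $C_{1/\ve}$ that blow up as $\ve \to 0$. Since the statement of this lemma is identical to \cite[Lemma 3.1]{G-G21}, the same proof given there applies verbatim, and in practice I would simply cite it.
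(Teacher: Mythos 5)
The paper does not prove this lemma at all: it is quoted verbatim from Gan--Gong and the only ``proof'' in the paper is the citation to \cite[Lemma 3.1]{G-G21} (just as, for the companion \rl{Lem::comp_long}, the paper defers to the appendix of \cite{Gong20}). So your fallback of simply citing \cite{G-G21} coincides exactly with what the paper does, and your sketch is an independent route rather than a reproduction of anything in this text. As a route it is sound and fits the paper's own toolkit: the Rychkov extension is universal (one operator bounded on $\La^s$ for all $s$ simultaneously), so extending, using the $\Bs^s_{\infty,\infty}$ characterization, and restricting does give \re{H-Z_convexity} by your block-by-block argument, and the paraproduct bound on $\R^d$ in fact gives the stronger $L^\infty$-version of \re{H-Z_product}, from which the stated form with $\|\cdot\|_{D,\ve}$ follows a fortiori (your explanation of why the $\ve$-norm appears is therefore not quite the right reason, but harmless). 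The estimate \re{chain_rule_a<1} is indeed immediate from differences. The only place where your outline is genuinely thin is \re{chain_rule_a>1}: the induction $\na(u\circ\var)=((\na u)\circ\var)\,D\var$ plus product rule is the right skeleton, but the bookkeeping that produces precisely the exponent $\frac{1+2\ve}{1+\ve}$ on $\|\var\|_{D,1+\ve}$, the decoupled term $\|u\|_{D',1+\ve}|\var|_{D,a}$, and the blow-up factor $C_{1/\ve}$ requires the full Fa\`a di Bruno/interpolation argument (as carried out in the appendix of \cite{Gong20}), and your sketch asserts rather than performs it. Since the lemma is imported wholesale from \cite{G-G21}, citing it, as you propose, is exactly what the paper intends.
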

We also need the following more general chain rule estimate. The proof for H\"older norms can be found in the appendix of \cite{Gong20} and the estimate for Zygmund norms can be done similarly. We leave the details to the reader. 
\begin{lemma} \label{Lem::comp_long}  
  Let $D_i$ be a sequence of Lipschitz domains in $\R^d$, such that $\lip(D_i)$ is uniformly bounded. Let $F_i = I +f_i$ map $D_i$ into $D_{i+1}$, with $\| f_i \|_1 \leq C_0$. Then 
  \begin{gather} \label{comp_est_Holder} 
   \| u \circ F_m \circ \cdots \circ F_1 \|_{D_0,r} 
   \leq C_r^m \left( \| u \|_r + \sum_{1 \leq i \leq m} \| u \|_1 \| f_i \|_r + \| u \|_r \| f_i \|_1 \right), \quad r \geq 0; 
   \\ \label{comp_est_H-Z_norm}  
   | u \circ F_m \circ \cdots \circ F_1 |_{D_0,r}
    \leq C_r^m \left( |u|_r + \sum_{1\leq i \leq m} \|u\|_{1+\ve} |f_i|_r 
    + C_{1/\ve} |u|_r \| f_i \|_{1+\ve}^{\frac{1+2\ve}{1+\ve}} \right), \quad r >1.   
  \end{gather} 
\end{lemma}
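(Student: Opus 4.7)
The plan is to argue by induction on $m$, repeatedly invoking the single-step chain rule estimates from \rl{Lem::H-Z}. I treat the H\"older estimate \eqref{comp_est_Holder} first. Set $u_m:=u$ and $u_{j-1}:=u_j\circ F_j$ for $j=m,\ldots,1$, so that $u_0$ is the full composition. The base case $m=1$ is a direct application of the one-step H\"older chain rule $\|u\circ\varphi\|_{D,r}\leq C_r(\|u\|_{D',r}\|\varphi\|_{D,1}^r+\|u\|_{D',1}\|\varphi\|_{D,r}+\|u\|_{D',0})$. For the inductive step I apply this to $u_0=u_1\circ F_1$, producing three contributions: a $\|u_1\|_r\|F_1\|_1^r$ term fed by the inductive hypothesis, a $\|u_1\|_1\|F_1\|_r$ term handled together with an auxiliary bound $\|u_j\|_1\leq C_1^{m-j}\|u\|_1$ obtained by iterating the chain rule at exponent $1$ (using $\|F_j\|_1\leq 1+C_0$), and a $\|u_1\|_0$ term absorbable into $\|u\|_r$.

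The $\|u\|_r\|f_i\|_1$ summand in the target inequality comes from splitting the first factor via $(1+\|f_j\|_1)^r\leq 1+K_r\|f_j\|_1$, valid on $[0,C_0]$ by convexity of $x\mapsto(1+x)^r$. This produces a clean $C\|u_j\|_r$ that feeds cleanly into the iteration, plus a correction $C\|u_j\|_r\|f_j\|_1$ that already matches the target form once the inductive bound on $\|u_j\|_r$ is inserted. Cross-terms of the shape $\|f_i\|_r\|f_j\|_1$ or $\|f_i\|_1\|f_j\|_1$ that materialize when expanding the product are absorbed using $\|f_j\|_1\leq C_0$ and only inflate the constant $C_r$; the resulting geometric growth in $j$ is packaged into the $C_r^m$ prefactor.

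For the Zygmund estimate \eqref{comp_est_H-Z_norm} the same induction works with \eqref{chain_rule_a>1} in place of the H\"older chain rule. The analogue of the auxiliary $\|u_j\|_1$ bound is a bound on $\|u_j\|_{1+\ve}$, obtained by iterating \eqref{chain_rule_a>1} at exponent $1+\ve$ (combined with \eqref{H-Z_convexity} to interpolate $|f_j|_{1+\ve}$ between $\|f_j\|_1$ and $|f_j|_r$ where needed). The factor $\|F_j\|_{1+\ve}^{(1+2\ve)/(1+\ve)}$ is then treated exactly as $\|F_j\|_1^r$ above, which is precisely the origin of the fractional exponent on $\|f_i\|_{1+\ve}$ in the statement; the $C_{1/\ve}$ constants from \eqref{chain_rule_a>1} are carried through the induction without change. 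The main obstacle in both cases is purely bookkeeping: verifying that every cross-term generated by the iteration either already matches one of the three allowed shapes or can be absorbed via $\|f_j\|_1\leq C_0$, so that only a single $C_r^m$ factor accumulates. No analytic input beyond \rl{Lem::H-Z} is required.
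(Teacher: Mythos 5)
Your overall strategy---induct on $m$ by iterating the one-step chain rule of Lemma~\ref{Lem::H-Z}---is the natural one and is exactly what the paper has in mind (the paper itself only defers to the appendix of Gong's paper for the H\"older case and says the Zygmund case ``can be done similarly''). For the H\"older estimate \eqref{comp_est_Holder} your argument is essentially correct: the crucial point is that the amplitude factor $\|F_j\|_{D,1}^r$ in the chain rule is uniformly bounded, since $\|F_j\|_1 \leq C(D) + \|f_j\|_1 \leq C(D)+C_0$, so the product of these factors over $j$ is absorbed into $C_r^m$, and the cross terms you identify really are controlled by $\|f_j\|_1 \leq C_0$. (The finer splitting $(1+\|f_j\|_1)^r \leq 1+K_r\|f_j\|_1$ is not strictly needed since the whole factor is already bounded, but it does no harm.)

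The Zygmund case, however, has a genuine gap. You claim the factor $\|F_j\|_{1+\ve}^{\frac{1+2\ve}{1+\ve}}$ appearing in \eqref{chain_rule_a>1} ``is then treated exactly as $\|F_j\|_1^r$ above.'' But the treatment of $\|F_j\|_1^r$ hinged on $\|F_j\|_1$ being uniformly bounded by the hypothesis $\|f_j\|_1 \leq C_0$, whereas $\|F_j\|_{1+\ve}$ involves the H\"older seminorm of $\nabla f_j$ and is \emph{not} controlled by $\|f_j\|_1 \leq C_0$. Since the exponent $\tfrac{1+2\ve}{1+\ve} > 1$, this factor cannot be linearized and bounded the way you do for $(1+\|f_j\|_1)^r$. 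When you feed the inductive hypothesis for $|u_1|_r$ into the leading term $|u_1|_r\|F_1\|_{1+\ve}^{(1+2\ve)/(1+\ve)}$, the resulting cross terms $\|f_1\|_{1+\ve}^{(1+2\ve)/(1+\ve)}\,|f_i|_r$ and $\|f_1\|_{1+\ve}^{(1+2\ve)/(1+\ve)}\,\|f_i\|_{1+\ve}^{(1+2\ve)/(1+\ve)}$ do not match any of the three allowed shapes on the right of \eqref{comp_est_H-Z_norm} and cannot be absorbed using only $\|f_j\|_1 \leq C_0$. The same unbounded factor also defeats your proposed auxiliary bound on $\|u_j\|_{1+\ve}$: iterating \eqref{chain_rule_a>1} at exponent $1+\ve$ again produces a product of the uncontrolled factors $\|F_j\|_{1+\ve}^{(1+2\ve)/(1+\ve)}$. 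The interpolation $|f_j|_{1+\ve}\lesssim \|f_j\|_1^{1-\theta}|f_j|_r^{\theta}$ relates norms but does not yield a uniform bound on $\|f_j\|_{1+\ve}$, so it does not repair the argument. To close the gap one either needs the stronger hypothesis $\|f_i\|_{1+\ve} \leq C_0$ (which holds in the paper's applications, where $\|f_i\|_{1+\ve}$ is in fact small), or a careful accounting in which the accumulated cross terms are tracked explicitly rather than waved through; your write-up should make this hypothesis explicit or supply that bookkeeping.
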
 
We now recall the definition of Besov space, which includes the H\"older-Zygmund space as a special case. 

In what follows we denote by $\Ss'(\R^d)$ the space of tempered distributions, and for an arbitrary open subset $U \subset \R^d$, we denote by $\Ss'(U):= \{ \wti f|_U: \wti f \in \Ss'(\R^d) \}$ the space of distributions in $U$ which are restrictions of tempered distributions in $\R^d$. 
\begin{defn}
  A \emph{classical Littlewood-Paley family} $\la$ is a a sequence $\la = (\la_j)_{j=0}^\infty$ of Schwartz functions defined on $\R^d$, such that the Fourier transform $\hht \la_j(\xi) = \int_{\R^d} 
\la_j (x) e^{-2 \pi i x \cdot \xi}$ satisfies 
\begin{itemize}
    \item  
    $\hht \la_0 \in C^\infty_c(B_2(\0))$ and $\hht \la_0 \equiv 1$ in $B_1(\0)$;  
    \item  
    $\hht \la_j (\xi) = \hht \la_0 (2^{-j} \xi) - \hht \la_0 (2^{-(j-1)}  \xi) $ for $j \geq 1$ and $\xi \in \R^d$.   
\end{itemize}
We denote by $\Cf = \Cf(\R^d)$ the set of all such families $\la$. 
\end{defn}

From the above definition, we see that if $\la = (\la_j)_{j=0}^\infty \in \Cf$, then $\supp \hht \la_j \subset \{ 2^{j-1} < |\xi| 
< 2^{j+1} \}$, for $j \geq 1$. 

In order to construct extension and smoothing operators on bounded Lipschitz domains, one needs the functions $\la_j$ to be supported in a cone. However, by a version of the uncertainty principle (for example, the Nazarov uncertainty principle \cite{Jam07}), $\hht \la_j$ cannot also be compactly supported. Therefore we need the following 
more general version of Littlewood-Paley family. 
\begin{defn}\label{Defn::Reg_LP}
		A \emph{regular Littlewood-Paley family} is a sequence $\phi=(\phi_j)_{j=0}^\infty$ of Schwartz functions, such that
		\begin{itemize}
			\item $\hht \phi_0 (0) =1$ and $\hht \phi _0(\xi) = 1+ O(|\xi|^\infty)$ as $\xi \to 0$. 
			\item $\hht \phi_j(\xi)= \hht \phi_0(2^{-j} \xi) - \hht \phi_0 (2^{-(j-1)} \xi)$, for $j\ge 1$.
		\end{itemize}
We denote by $\phi \in\mf{D} = \mf{D}(\R^d)$ the set of all such families $\phi$. 	
\end{defn}
Hence $\mf C \subset \mf D$. Also, if $\phi \in \mf D$, then $\sum_{j=0}^\infty \hht \phi_j = 1 $. 
\begin{defn}
    A \emph{generalized dyadic resolution} is a sequence $\psi=(\psi_j)_{j=1}^\infty$ of Schwartz functions, such that
		\begin{itemize}
		\item $ \hht \psi (\xi) = O(|\xi|^\infty)$ as $\xi \to 0$.  
		\item $\hht \psi_j(\xi)= \hht \psi_1(2^{-(j-1)}x)$, for $j\ge1$.
		\end{itemize}
We denote by $\Gf = \Gf(\R^d)$ the set of all such sequences $\psi$.  
\end{defn}
It is clear from the definition that if $\phi = (\phi_j)_{j=0}^\infty \in \Df$, then $(\phi_j)_{j=1}^\infty \in \Gf$. 

\begin{defn}
 We use $\Ss_0(\R^d)$ to denote the space of all infinite order moment vanishing Schwartz functions, that is, all $f \in \Ss(\R^d)$ such that $\int_{\R^d} x^\all f(x) \, dx = 0$ for all $\all \in \N^d$, or equivalently, $f \in \Ss(\R^d)$ such that $\hht f(\xi) 
= O(|\xi|^\infty)$ as $\xi \to 0$.   
\end{defn} 

In the case when $\la $ is a classical Littlewood-Paley family, we have the property that $\hht \la_j$ are compactly supported for $j \geq 1$, and $\supp \hht \la_j \cap \supp \hht \la_k = \emptyset$ if and only if $|j-k| \leq 2$. For a regular Littlewood-Paley family $\phi$, this is no longer true, as $\phi_j$ are merely Schwartz functions whose support are no longer compact. Nevertheless, we still have the following result which shows that for $j \neq k$, $\hht \la_j \cap \hht \la_k$ have only negligible overlaps.  

\begin{prop}{\cite[Corollary 3.7]{S-Y25}}  \label{Prop::LP_exp_decay} 
    Let $\eta_0, \thh_0 \in \Ss_0 (\R^d)$ and define $\eta_j(x):= 2^{jd} \eta_0 (2^jx)$ and $\thh_j(x):= 2^{jd}\thh_0 (2^j x)$ for $j \in \Z^+$. Then for any $M,N \geq 0$, there is $C = C(\eta,\thh, M,N) >0$ such that 
    \[
    \int_{\R^d} |\eta_j \ast \thh_k (x) | (1+ 2^{\max (j,k)}|x|)^N \, dx \leq C2^{-M|j-k|} , \quad \forall j,k \in \N.   
    \]
\end{prop}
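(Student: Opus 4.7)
The plan is to exploit the infinite-order vanishing moments of $\eta_0$ and $\thh_0$ together with Schwartz decay of both factors to execute an ``almost orthogonality'' estimate, in the spirit of Coifman--Meyer or the $T(1)$ theorem. By symmetry (swap the roles of $\eta$ and $\thh$ if necessary), we may assume $j\geq k$, so that the weight on the left reads $(1+2^j|x|)^N$.

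The first step is to rewrite
\[
\eta_j \ast \thh_k(x) = \int_{\R^d} \eta_j(z)\,\thh_k(x-z)\,dz.
\]
Since $\eta_0\in\Ss_0(\R^d)$, its dilate $\eta_j$ still satisfies $\int z^\all \eta_j(z)\,dz = 0$ for every multi-index $\all$. We may therefore subtract from $\thh_k(x-z)$ its order-$M$ Taylor polynomial in $z$ at $0$ without changing the integral, for any $M$ to be chosen later:
\[
\eta_j\ast\thh_k(x) = \int_{\R^d}\eta_j(z)\,R_M(x,z)\,dz,\qquad |R_M(x,z)| \leq C_M|z|^M \sup_{|t|\leq 1}|\nabla^M\thh_k(x-tz)|.
\]
Using $\nabla^M\thh_k(y) = 2^{k(d+M)}(\nabla^M\thh_0)(2^k y)$ together with Schwartz decay of $\nabla^M\thh_0$ yields a pointwise bound on $\eta_j\ast\thh_k(x)$.

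The second step is to integrate this pointwise bound against the weight $(1+2^j|x|)^N$ and perform two changes of variable. Putting $w=2^k(x-tz)$ in the innermost integral for each fixed $z,t$, and using the elementary estimate $(1+2^j|x|)^N \leq (1+2^{j-k}|w|)^N(1+2^j|z|)^N$ together with $2^{j-k}\geq 1$, one obtains
\[
\int|(\nabla^M\thh_0)(2^k(x-tz))|(1+2^j|x|)^N\,dx \ls 2^{(j-k)N-kd}(1+2^j|z|)^N.
\]
Substituting this back and then rescaling $u=2^j z$, the Schwartz decay of $\eta_0$ collapses the remaining $z$-integral into $\ls 2^{-jM}$. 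Combining all powers of $2$,
\[
\int|\eta_j\ast\thh_k(x)|(1+2^j|x|)^N\,dx \ls 2^{kM+(j-k)N-jM} = 2^{(j-k)(N-M)}.
\]
Choosing $M=M_0+N$ for any prescribed $M_0\geq 0$ yields the claimed bound $C\,2^{-M_0|j-k|}$.

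The main technical obstacle is the careful bookkeeping of the weights under the two dilations: the factor $(1+2^j|x|)^N$ degrades by $2^{(j-k)N}$ when passed to the intrinsic scale $2^{-k}$ of $\thh_k$, and this growth has to be dominated strictly by the gain $2^{-(j-k)M}$ produced by the Taylor remainder. This forces $M$ to be taken larger than $N$, which is the precise reason why the moment vanishing must hold to \emph{every} order (that is, $\eta_0,\thh_0\in\Ss_0$, not just finitely many vanishing moments), since $N$ is arbitrary. A parallel argument on the Fourier side, starting from the bound $|\hht\eta_0(\xi)| \ls |\xi|^M(1+|\xi|)^{-M-L}$ and recovering the spatial weight by integration by parts in $\xi$, would give the same conclusion but is notationally heavier than the Taylor-remainder approach sketched above.
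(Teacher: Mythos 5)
Your proof is correct, and it is essentially the standard almost-orthogonality argument; the paper itself does not reproduce a proof but simply cites [S-Y21-1, Corollary 3.7], and the Taylor-remainder/rescaling computation you give is exactly the mechanism behind that result. The one small imprecision is the $\sup_{|t|\le 1}$ in your Lagrange remainder bound, which is more cleanly handled by using the integral form of the Taylor remainder so that the change of variables $w=2^k(x-tz)$ in the $x$-integral is performed for each fixed $t\in[0,1]$ before integrating $t$ out; this makes the subsequent bookkeeping rigorous without changing any of the exponent arithmetic.
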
 
 Let $s \in \R$ and $1 \leq p, q \leq \infty$. For $\la \in \Cf$, the \emph{nonhomogeneous Besov norm} $\Bs^s_{p,q}(\la)$ of $u \in \Ss'(\R^d)$ is defined by
 \begin{equation} \label{Besov_def} 
   \| u \|_{\Bs^s_{p,q}(\la)} 
  =  \| (2^{js} \la_j \ast f )^{\infty}_{j=0} \|_{\ell^q( L^p)} 
  = \left( \sum_{j=0}^\infty 2^{jqs} \|\la_j \ast u \|_{L^p(\R^d)}^q \right)^\frac{1}{q}
  < \infty.  
 \end{equation}
The norm topology is independent of the choice of $\la$. In other words, for any $\la, \la'\in \Cf$, $1 \leq p,q \leq \infty$ and $s \in \R$, there is a $C = C_{\la,\la',p,q,s} >0$ such that for every $f \in \Ss'(\R^d)$,  
\[
  C^{-1} \| f \|_{\Bs_{p,q} (\la')} \leq \| f \|_{\Bs_{p,q} (\la)} \leq C \| f \|_{\Bs_{p,q} (\la')}. 
\]
The reader may refer to \cite[Prop.2.3.2]{Tr10} for the proof of this fact. We remark that one can also use a regular Littlewood-Paley family $\phi$ in the definition of the Besov norm \re{Besov_def}, and different choices of $\phi$ give rise to equivalent norms. See \cite{B-P-T96}.  

For this reason we will henceforth drop the reference to the choice of Littlewood-Paley family in the definition of the Besov norm and write it simply as $ \| \cdot \| _{\Bs^s_{p,q}}$.  
\begin{defn} 
The \emph{nonhomogeneous Besov space} $\Bs^s_{p,q}(\R^d)$ is defined by 
\[
  \Bs^s_{p,q}(\R^d) = \{ u \in \Ss'(\R^d): \| u \|_{\Bs^s_{p,q}} < \infty \}. 
\]
Let $\Om \subset \R^d$ be an arbitrary open subset. We define $\Bs^s_{p,q}(\Om):= \{  \wti f|_\Om : \wti f \in \Bs^s_{p,q}(\R^d) \}$, with norm defined by 
\[
  \| f \|_{\Bs^s_{p,q}(\Om)} = \inf_{\wti f|_{\Om} = f }  \| \wti f \|_{\Bs^s_{p,q}(\R^d)} .     
\] 
In other words, the space $\Bs^s_{p,q}(\Om)$ consists of exactly those $f \in \Ss'(\Om)$ which are the restrictions of $\Bs^s_{p,q}(\R^d)$. 
\end{defn} 

In practice, one would like to have an intrinsic definition of the space $\Bs^s_{p,q}(\Om)$. In a well-known paper \cite{Ry99}, Rychkov's showed that this is possible on a bounded bounded Lipschitz domain $\Om$. 
We now recall some useful construction from that paper. 

 \begin{note}
In $\R^d$, we use the $x_d$-directional cone $\Kb:= \{ (x',x_d): x_d > |x'| \}$ and its reflection 
\[
 -\Kb := \{ (x',x_d): x_d < - |x'| \}, \quad 
x' = (x_1, x_2, \dots, x_{d-1}). 
\]
\end{note}
\begin{defn} \label{Defn::K-LP_pair} 
  A \emph{$\Kb$-Littlewood-Paley pair} is a collection of Schwartz functions $(\phi_j, \psi_j)_{j=0}^\infty$ such that
\begin{itemize}
\item
$\phi = (\phi_j)_{j=1}^\infty$ and $\psi = (\psi_j)_{j=1}^\infty \in \Gf $. 
 \item 
$\supp \phi_j, \supp \psi_j\subset -\Kb \cap \{ x_d < - 2^{-j} \} $ for all $j \geq 0$. 
\item 
$\sum_{j=0}^\infty \phi_j = \sum_{j=0}^\infty \psi_j \ast \phi_j = \del_0$ is the Direc delta measure at $\0 \in \R^d$.  
\end{itemize}
\end{defn} 
For the construction of $\Kb$-Littlewood-Paley pair, the reader may refer to \cite[Prop 2.1]{Ry99} or \cite[Lemma 3.4]{S-Y25} for a slightly different exposition. Given such a pair $(\phi_j, \psi_j)$, 
Rychkov defines the following (universal) extension operator on a special Lipschitz domain $\om$ (see \rd{Def::Lip_dom}) : 
\[
  \Ec_\om f = \sum_{j=0}^\infty \psi_j \ast (\1_\om \cdot (\phi_j \ast f ) ), \quad f \in \Ss'(\om). 
\]
Given $f \in \Bs^s_{p,q} (\om)$, let $\wti f \in \Bs^s_{p,q} (\R^d)$ with $\wti f|_\om =f$. Since 
\[
  \phi_j \ast \wti f(x) = \int_{-\Kb} \wti f(x- y) \phi_j(y) \, dy = \int_{-\Kb} f(x- y) \phi_j(y) \, dy = \phi_j \ast f (x), \quad x \in \om,  
\]
we have 
\[
\left( \sum_{j=0}^\infty 2^{jqs} \| \phi_j \ast f \|_{L^p(\om)}^q \right)^{\frac{1}{q}} 
\leq \left( \sum_{j=0}^\infty 2^{jqs} \|\phi_j \ast \wti f \|_{L^p(\R^d)}^q \right)^{\frac{1}{q}} \approx \| \wti f \|_{\Bs^s_{p,q}(\R^d)},  
\]
where the last inequality holds by the fact that different regular Littlewood-Paley families $\phi$ give rise to equivalent norm $\| \cdot \|_{\Bs^s_{p,q}(\R^d)}$. 
Since this holds for any $\wti f$ with $\wti f|_\om = f$, by definition of the $\| \cdot \|_{\Bs^s_{p,q}(\om)}$ norm, we have  
\begin{equation} \label{equiv_norm_leq} 
   \| (2^{js} \phi_j \ast f)_{j=0}^\infty \|_{l^q(L^p(\om))} := \left( \sum_{j=0}^\infty 2^{jqs} \| \phi_j \ast f \|_{L^p(\om)}^q \right)^{\frac{1}{q}} 
\leq \| f \|_{\Bs^s_{p,q}(\om)}. 
\end{equation}
On the other hand, Rychkov proved the following important theorem for the universal extension operator $\Ec_\om$: 
\begin{prop}{\cite{Ry99}} \label{Prop::Ext_opt_spec_Lip}   
Let $\om$ be a special Lipschitz domain in $\R^d$. The operator $\Ec_\om$ satisfies  
\begin{itemize}
    \item $\Ec_\om$ defines a bounded map $\Bs^s_{p,q}(\om) \to \Bs^s_{p,q}(\R^d)$, for any $1\leq p,q \leq \infty$ and $s \in \R$.
    \item $\Ec_\om f|_{\om} = f$, for $f \in \Ss'(\om)$. 
\end{itemize}
\end{prop}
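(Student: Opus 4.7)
The plan is to verify the restriction identity first, using the cone support of $\phi_j, \psi_j$, and then the boundedness, using the almost-orthogonality estimate of \rp{Prop::LP_exp_decay}.

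For $\Ec_\om f|_\om = f$, the key observation is that if $x \in \om = \{x_d > \rho(x')\}$ and $y \in \supp \phi_j \subset -\Kb$ (so $-y_d > |y'|$), then the Lipschitz bound $\|\nabla \rho\|_\infty < 1$ gives
\[
(x-y)_d = x_d - y_d > \rho(x') + |y'| \geq \rho(x' - y'),
\]
and hence $x - y \in \om$; the same holds for $y \in \supp \psi_j$. Consequently, for $x \in \om$, $(\phi_j \ast f)(x)$ is well defined using only $f|_\om$, and the indicator $\1_\om$ equals $1$ on the effective integration region appearing in $\psi_j \ast (\1_\om \cdot (\phi_j \ast f))(x)$. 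Thus $\Ec_\om f(x) = \sum_j (\psi_j \ast \phi_j \ast f)(x) = f(x)$ for $x \in \om$, by the third defining property of the $\Kb$-Littlewood-Paley pair.

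For boundedness, I would fix a regular Littlewood-Paley family $\tilde\phi \in \mf{D}$ and write
\[
\tilde\phi_k \ast \Ec_\om f = \sum_{j=0}^\infty (\tilde\phi_k \ast \psi_j) \ast \bigl(\1_\om \cdot (\phi_j \ast f)\bigr).
\]
Young's inequality, together with $\|\1_\om (\phi_j \ast f)\|_{L^p(\R^d)} = \|\phi_j \ast f\|_{L^p(\om)}$, yields
\[
\|\tilde\phi_k \ast \Ec_\om f\|_{L^p(\R^d)} \leq \sum_{j=0}^\infty \|\tilde\phi_k \ast \psi_j\|_{L^1(\R^d)} \|\phi_j \ast f\|_{L^p(\om)}.
\]
Since $(\psi_j)_{j \geq 1}$ and $(\tilde\phi_k)_{k \geq 1}$ are generalized dyadic resolutions, \rp{Prop::LP_exp_decay} (taking $N = 0$) gives $\|\tilde\phi_k \ast \psi_j\|_{L^1} \leq C_M 2^{-M|j-k|}$ for any $M \geq 0$. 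Choosing $M > |s|$, multiplying by $2^{ks}$, and applying discrete Young's inequality in $\ell^q$ produces
\[
\|(2^{ks} \tilde\phi_k \ast \Ec_\om f)_k\|_{\ell^q(L^p(\R^d))} \lesssim \|(2^{js} \phi_j \ast f)_j\|_{\ell^q(L^p(\om))} \leq \|f\|_{\Bs^s_{p,q}(\om)},
\]
where the last step is \re{equiv_norm_leq}.

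The main technical subtlety I expect is the low-frequency terms $\phi_0, \psi_0$, which need not lie in $\Ss_0$ and so \rp{Prop::LP_exp_decay} does not apply directly. For these exceptional indices, the infinite-order vanishing of $\hht{\tilde\phi_k}(\xi)$ as $\xi \to 0$ for $k \geq 1$ still gives $\|\tilde\phi_k \ast \psi_0\|_{L^1} \leq C_N 2^{-kN}$ for any $N$ via a direct Schwartz estimate, and symmetrically for $\phi_0$; these bounds are absorbed into the same geometric-series estimate. Convergence of the series defining $\Ec_\om f$ in $\Ss'(\R^d)$ then follows a posteriori from the Besov-norm bound via the continuous embedding $\Bs^s_{p,q}(\R^d) \hookrightarrow \Ss'(\R^d)$.
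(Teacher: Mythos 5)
The paper states this result as a citation to Rychkov \cite{Ry99} and gives no internal proof, so there is no argument of the paper's own to compare against. Your reconstruction is correct and follows what is essentially Rychkov's original argument: the cone-support geometry of the $\Kb$-Littlewood-Paley pair together with the Lipschitz bound $\|\nabla\rho\|_{L^\infty}<1$ yields the reproducing identity on $\om$, and boundedness follows by testing $\Ec_\om f$ against an arbitrary regular Littlewood-Paley family, invoking the almost-orthogonality estimate of \rp{Prop::LP_exp_decay}, and concluding with discrete Young/Schur on $\ell^q$ after choosing $M>|s|$. You also correctly flag the two points that need extra care: the index-$0$ pieces, which need not lie in $\Ss_0$ and so fall outside the literal hypotheses of \rp{Prop::LP_exp_decay} but still have rapidly decaying cross-terms because the other factor retains infinite moment vanishing; and the $\Ss'(\R^d)$-convergence of the defining series, which is obtained a posteriori from the uniform Besov-norm bound. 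This is, in substance, the proof in \cite{Ry99}.
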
 
More specifically, Rychkov showed that
\[
 \| \Ec_\om f \|_{\Bs^s_{p,q}(\R^d)} \leq C_{p,q,s} \left( \sum_{j=0}^\infty 2^{jqs} |\phi_j \ast f|_{L^p(\om)}^q \right)^{\frac{1}{q}} 
 = \| (2^{js} \phi_j \ast f)_{j=0}^\infty \|_{l^q(L^p(\om))}. 
\]
By definition, $\| f \|_{\Bs^s_{p,q}(\om)} \leq \| \Ec_\om f \|_{\Bs^s_{p,q}(\R^d)}$. Thus $\| f \|_{\Bs^s_{p,q}(\om)} \leq C_{p,q,s} \| (2^{js} \phi_j \ast f)_{j=0}^\infty \|_{l^q(L^p(\om))}$. Together with \re{equiv_norm_leq}, this implies that 
\[
  \| f \|_{\Bs^s_{p,q}(\om)} \approx_{p,q,s}  \| (2^{js} \phi_j \ast f)_{j=0}^\infty \|_{l^q(L^p(\om))}.
\]
Thus we have an intrinsic characterization for the $\| \cdot \|_{\Bs^s_{p,q}}(\om)$ norm. We will use this fact to construct smoothing operators with estimates in the H\"older-Zygmund space, which is a special Besov space.  

\begin{prop} \label{Prop::space_property}  
 Let $\Om \subset \R^d$ be either a bounded Lipschitz domain or $\R^d$. Then $\La^s(\Om) = \Bs^s_{\infty,\infty} (\Om)$, for $s >0 $. 
\end{prop}
\begin{proof}
It is well known that $\La^s(\R^d) = \Bs^s_{\infty,\infty} (\R^d)$ for $s>0$ (see for example \cite[p.~90]{Tr10}). Since $\La^s(\Om)$ and $\Bs^s_{\infty,\infty} (\Om)$ are defined as the restriction of functions in $\La^s(\R^d)$ and $\Bs^s_{\infty,\infty} (\R^d)$, we immediately get $\La^s(\Om) = \Bs^s_{\infty,\infty} (\Om)$ for $s>0$.   
\end{proof}

By using partition of unity and \rp{Prop::Ext_opt_spec_Lip}, one can define the universal extension operator on any bounded Lipschitz domain. We use the identification $\La^s(\Om) = \Bs^s_{\infty, \infty} (\Om)$. 
\begin{prop}{\cite{Ry99}} \label{Prop::Ext_opt_Lip}     
Let $\Om$ be a bounded Lipschitz domain in $\R^d$. There exists an operator $\Ec_\Om$ such that
\begin{itemize}
    \item $\Ec_\Om$ defines a bounded map $\La^s(\Om) \to \La^s(\R^d)$, for all $s >0$.
    \item $\Ec_\Om f|_{\Om} = f$, for $f \in \Ss'(\Om)$. 
\end{itemize}
\end{prop}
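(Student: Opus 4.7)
The plan is to reduce the bounded Lipschitz case to the special Lipschitz case of \rp{Prop::Ext_opt_spec_Lip} via a partition of unity argument, localizing on the boundary cover provided by \rd{Def::Lip_dom}. First I would augment the boundary cover $\{U_\nu\}_{\nu=1}^M$ with an open set $U_0 \Subset \Omega$ so that $\{U_\nu\}_{\nu=0}^M$ covers $\overline\Omega$, and choose a $C^\infty$ partition of unity $\{\chi_\nu\}_{\nu=0}^M$ subordinate to this cover with $\sum_{\nu=0}^M \chi_\nu \equiv 1$ on a neighborhood of $\overline\Omega$. Then I would write any $f\in\La^s(\Om)$ as $f = \sum_{\nu=0}^M \chi_\nu f$ and treat each piece separately.

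For the interior piece $\chi_0 f$, since $\supp\chi_0 \Subset \Om$, extending by zero outside $\Om$ produces a compactly supported distribution in $\R^d$; multiplication by a fixed $C_c^\infty$ function is a bounded map $\La^s(\Om) \to \La^s(\Om)$ for every $s\in\R$ (this is a standard multiplier statement which follows from the Besov characterization in \rp{Prop::space_property}(i) and the almost-orthogonality in \rp{Prop::LP_exp_decay}), so this summand lies in $\La^s(\R^d)$ with the desired norm control. For each boundary piece $\nu \geq 1$, I would transport $\chi_\nu f$ by the linear change of variables $\Phi_\nu^{-1}$ to a distribution supported inside $\Phi_\nu^{-1}(U_\nu) \cap \om_\nu$, apply the Rychkov universal extension $\Ec_{\om_\nu}$ from \rp{Prop::Ext_opt_spec_Lip}, and then pull the result back to $\R^d$ via $\Phi_\nu$; finally I would multiply by a slightly larger cutoff $\ti\chi_\nu \in C_c^\infty(U_\nu)$ with $\ti\chi_\nu \equiv 1$ on $\supp \chi_\nu$ to ensure the support is kept away from the boundary of $U_\nu$ after extension, so that summing in $\nu$ gives a well-defined element of $\La^s(\R^d)$. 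Defining
\[
  \Ec_\Om f \ := \ E_0(\chi_0 f) + \sum_{\nu=1}^M \ti\chi_\nu \cdot \bigl(\Ec_{\om_\nu}[(\chi_\nu f)\circ \Phi_\nu]\bigr)\circ \Phi_\nu^{-1},
\]
where $E_0$ denotes extension by zero, gives the candidate operator. The restriction identity $\Ec_\Om f|_\Om = f$ follows because on $\Om$ each local extension restricts to $\chi_\nu f$ (using $\Ec_{\om_\nu} g|_{\om_\nu} = g$) and the $\ti\chi_\nu$ equal $1$ on $\supp\chi_\nu$, so the terms sum to $\sum_\nu \chi_\nu f = f$.

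The boundedness $\La^s(\Om) \to \La^s(\R^d)$ then reduces to three ingredients, each of which I would invoke without reproving: boundedness of multiplication by $C_c^\infty$ functions on $\La^s(\R^d) = \Bs^s_{\infty,\infty}(\R^d)$ for all $s\in\R$; invariance of $\La^s(\R^d)$ under invertible linear changes of coordinates; and the estimate
\[
  \| \ti\chi_\nu \cdot (\Ec_{\om_\nu} g)\circ \Phi_\nu^{-1}\|_{\La^s(\R^d)} \ls \| \Ec_{\om_\nu} g\|_{\La^s(\R^d)} \ls \| g\|_{\La^s(\om_\nu)}
\]
from \rp{Prop::Ext_opt_spec_Lip}. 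Since $\|(\chi_\nu f)\circ\Phi_\nu\|_{\La^s(\om_\nu)} \ls \|\chi_\nu f\|_{\La^s(\Om\cap U_\nu)} \ls \|f\|_{\La^s(\Om)}$, summing over $\nu$ gives the claimed bound with constants depending only on the cover, the partition of unity, the $\Phi_\nu$, and $s$, hence ultimately on the Lipschitz norm of $\Om$ and on $s$.

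The only genuinely delicate point — and the step I expect to require most care — is justifying the smooth cutoff multiplication in $\La^s$ for \emph{all} $s \in \R$, including negative $s$, where the Hölder-norm intuition breaks down and one must argue through the Besov characterization, using \rp{Prop::LP_exp_decay} to control the commutation of Littlewood-Paley pieces with multiplication by a Schwartz function. Once this multiplier lemma is in hand, the rest of the argument is bookkeeping.
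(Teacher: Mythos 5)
Your proposal is correct and follows exactly the route the paper itself indicates: the paper does not prove \rp{Prop::Ext_opt_Lip} but cites Rychkov and remarks that one obtains the bounded Lipschitz case from \rp{Prop::Ext_opt_spec_Lip} by a partition-of-unity argument, which is precisely your construction. One small clarification: the cutoff $\ti\chi_\nu$ is needed not so much for well-definedness of the sum but to guarantee the restriction identity, since $(\Ec_{\om_\nu}[(\chi_\nu f)\circ\Phi_\nu])\circ\Phi_\nu^{-1}$ need not vanish on $\Om\setminus\Phi_\nu(\om_\nu)$; as you already include the cutoff, the argument goes through, and you correctly flag the $C^\infty_c$-multiplier boundedness on $\Bs^s_{\infty,\infty}(\R^d)$ for negative $s$ as the one ingredient that cannot be read off the Hölder-type product rule in \rl{Lem::H-Z} and must come from a paraproduct or almost-orthogonality argument.
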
 
For more detailed properties of the Rychkov extension operator we refer the reader to \cite{S-Y24}  

\begin{lemma}{{\cite[Lemma 2.1]{We89}, \cite[Lemma 3.3]{G-G24} }}  \label{Lem::Inv} 
  Let $F = I +f$ be a $C^1$ map from $B_r=\{x \in \R^d: \| x \| \leq r\} \subset \R^d$ into $\R^d$ with 
  \[
    f(0) = 0, \quad \| D f \|_{B_r, 0} \leq \thh < \yh. 
  \] 
  Let $r' = (1- \thh)r$. Then the range of $F$ contains $B_{r'}$ and there exists a $C^1$ inverse map $G= I+g$ which maps $B_{r'}$ injectively into $B_r$, with 
  \[
  g(0) = 0, \quad \| D g \|_{B_{r'},0} \leq 2 \| Df \|_{B_r,0}. 
  \]
  Assume further that $f \in \La^{a+1}(B_r)$. Then $g \in \La^{a+1}(B_{r'})$ and 
  \begin{gather*}
    \| Dg \|_{B_{r'}, a} \leq C_a \| Df \|_{B_r,a}, \quad a \geq 0;  
  \\ 
   |Dg|_{B_{r'},a} \leq C_a |Df|_{B_r,a} (1+C_{1/\ve} \| f \|^{\frac{1+2\ve}{1+\ve}}_{1+\ve}) \quad a > 1. 
\end{gather*} 
\end{lemma}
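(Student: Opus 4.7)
The plan is to first establish the existence of the inverse via Banach's fixed point theorem, then derive the derivative formula $Dg(y) = -Df(G(y))(I+Df(G(y)))^{-1}$ and bootstrap estimates in the H\"older and H\"older-Zygmund norms from this identity.

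For existence, given $y \in B_{r'}$ with $r' = (1-\thh)r$, I consider the map $T_y(x) = y - f(x)$ on $B_r$. Since $\| Df \|_0 \leq \thh$ and $f(0) = 0$, we have $\| f(x) \| \leq \thh r$ for $x \in B_r$, so $\| T_y(x) \| \leq (1-\thh) r + \thh r = r$, and $T_y$ is a contraction with constant $\thh < \yh$. Banach's fixed point theorem then furnishes a unique $x = G(y) \in B_r$ with $F(x) = y$, and $g := G - I$ satisfies $g(y) = -f(G(y))$ with $g(0) = 0$. For the $C^0$ bound on $Dg$, differentiating $F\circ G = I$ gives $DG(y) = (I + Df(G(y)))^{-1}$, which exists by the Neumann series since $\| Df \|_0 < 1$, and $Dg = -Df \circ G \cdot (I+Df\circ G)^{-1}$. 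The Neumann series estimate $\| (I + Df)^{-1} \|_0 \leq (1-\thh)^{-1} \leq 2$ yields $\| Dg \|_{B_{r'},0} \leq 2 \| Df \|_{B_r,0}$.

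For the H\"older estimate with $a \geq 0$, I start from $Dg = -(Df \circ G)(I + Df\circ G)^{-1}$ and expand the inverse as a Neumann series. Each term is a product of compositions $(Df \circ G)^k$, and iterated application of the standard H\"older product and chain rule (after a basic estimate $\| G \|_{B_{r'},1} \leq 1 + 2\| Df \|_0 \leq 2$) gives $\| Dg \|_{B_{r'},a} \leq C_a \| Df \|_{B_r,a}$, with the small factor $\| Df \|_0 \leq \thh < \yh$ guaranteeing geometric convergence of the series.

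For the Zygmund estimate with $a > 1$, I apply the chain rule \eqref{chain_rule_a>1} from \rl{Lem::H-Z} with $u = Df$ and $\var = G = I + g$, combined with the product rule \eqref{H-Z_product} applied to $Dg = -(Df\circ G)(I + Df\circ G)^{-1}$. This produces an inequality of the schematic form
\[
  |Dg|_{B_{r'},a} \leq C_a \bigl( C_{1/\ve} |Df|_{B_r,a} \| G \|_{B_{r'},1+\ve}^{\frac{1+2\ve}{1+\ve}} + C_{1/\ve} \| Df \|_{B_r,1+\ve} |g|_{B_{r'},a} + \| Df \|_{B_r,0} \bigr),
\]
together with the fact that $|g|_{B_{r'},a} \leq C |Dg|_{B_{r'},a-1}$, which by the previous H\"older step is controlled by $\| Df \|_{B_r,a-1}$. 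The main obstacle will be tracking constants in the self-referential inequality arising from the Neumann series and the appearance of $|g|_a = |G - I|_a$ on the right-hand side, but since $\| Df \|_{1+\ve}$ is as small as we wish when $\thh$ is small (by interpolation between $\| f \|_0$ and $\| f \|_{1+\ve}$ when needed), the $|g|_a$ term can be absorbed into the left-hand side. Collecting everything yields the desired bound $|Dg|_{B_{r'},a} \leq C_a |Df|_{B_r,a}(1 + C_{1/\ve} \| f \|_{1+\ve}^{\frac{1+2\ve}{1+\ve}})$.
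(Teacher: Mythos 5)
A preliminary note: the paper does not prove this lemma at all; it is quoted from \cite{We89} and \cite{G-G21}, so your attempt can only be compared with the standard arguments there, which it follows in outline (contraction mapping for existence, then the identity $Dg=-(Df\circ G)\,(I+Df\circ G)^{-1}$ and product/chain-rule bootstrapping). The existence part, the identity $g=-f\circ G$ with $g(0)=0$, and the $C^0$ bound $\|Dg\|_{B_{r'},0}\le 2\|Df\|_{B_r,0}$ are correct as you argue them.

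Two steps, however, are genuinely gapped. (1) The ``geometric convergence'' of the Neumann series in the $\|\cdot\|_{a}$ norm does not follow from $\|Df\|_0\le\thh<\yh$ alone: every application of the product rule costs a constant $C_a$, so the $k$-th term of the series is only bounded by roughly $k\,C_a^{\,k}\,\thh^{\,k-1}\,\|Df\circ G\|_a$, and nothing guarantees $C_a\thh<1$. To make this step rigorous you must either use the Leibniz rule for the H\"older \emph{seminorm}, whose top-order term carries constant $1$ (and then absorb using $\thh<\yh$, inducting on unit steps in $a$), or avoid the infinite series altogether by writing $(I+Df\circ G)^{-1}$ through the adjugate and the determinant, exactly as the paper does for $(I+\Kc)^{-1}$. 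Relatedly, for $a>1$ the chain-rule term $\|Df\|_{1}\|G\|_{a}$ reintroduces $\|g\|_a$; this is one derivative below the quantity $\|Dg\|_a$ being estimated and is handled by induction together with the interpolation $\|Df\|_{1}\|Df\|_{a-1}\lesssim\|Df\|_{0}\|Df\|_{a}$ --- a point your H\"older step does not address (the bound $\|G\|_1\le 2$ only controls the other chain-rule term). (2) In the Zygmund step, your treatment of the term $C_{1/\ve}\|Df\|_{1+\ve}|g|_{a}$ rests on the claim that $\|Df\|_{1+\ve}$ is ``as small as we wish when $\thh$ is small'': but $\thh$ is not assumed small (only $\thh<\yh$), and interpolating between $\|f\|_0$ and $\|f\|_{1+\ve}$ cannot make the $(1+\ve)$-norm itself small --- that reasoning is circular, so absorption into the left-hand side is not justified. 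Fortunately no absorption is needed: $|g|_a$ involves one derivative fewer than $|Dg|_a$, hence is controlled by the already-proven H\"older estimate (via $|g|_a\lesssim \|g\|_0+|Dg|_{a-1}$ and standard comparisons of H\"older and Zygmund norms), and the resulting product $\|Df\|_{1+\ve}\cdot\|Df\|_{a-1}$, after interpolation, is precisely of the form permitted by the stated conclusion, which allows the multiplicative factor $1+C_{1/\ve}\|f\|_{1+\ve}^{\frac{1+2\ve}{1+\ve}}$ rather than requiring any smallness. With these two repairs your argument becomes the standard proof.
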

In practice our $f$ will have compact support in $B_r$ and we can take $r=r'$. 

The following result shows how an almost complex structure changes under transformation of the form $F = I +f$, where $I$ is the identity map. 
\begin{lemma} \label{Lem::A_choc}    
Let $\{ X_{\ov \all} \}_{\all=1}^n$ be a $C^1$ almost complex structure defined near the origin of $\R^{2n}$. 
\begin{enumerate}[(i)]
    \item 
    By an $\R$-linear change of coordinates of $\C^n$, the almost complex structure $\{ X_{\ov \all} \}_{\all=1}^n$ can be transformed into $\{ X_{\ov \all} = \pa_{\ov \all} + A_{\ov \all}^\beta \pa_\beta\}_{\all=1}^n$ with $A(0) = 0$. 
    \item 
 Let $F = I+f$ be a $C^1$ map with $f(0) = 0$ and $Df$ is small. The associated complex structure $\{ dF(X_{\ov \all} \}$ has a basis $\{ X'_{\ov \all} \}$ such that $ X'_{\ov \all} = \pa_{\ov \all} + A'^{\beta}_{\ov \all} \pa_{\beta}$, where $A'$ is given by 
 \[
  A(z)+ \pa_{\ov z} f + A(z) \pa_z f 
  = (I + \pa_{\ov z} \ov{f(z)} + A(z) \pa_z \ov{f(z)}) A' \circ F(z). 
 \]
\end{enumerate}
\end{lemma}
This is proved in \cite{We89}. For a more detailed proof the reader may also refer to \cite[Lemma 2.1]{G-G24}. 

We note that the formal integrability condition is invariant under diffeomorphism. This follows from the fact that if $[X_{\ov \all}, X_{\ov \beta}] = c^{\ov \gm}_{\ov \all \ov \beta} X_{\ov \gm}$, then $[F_\ast(X_{\ov \all}), F_\ast(X_{\ov \beta})] = (c^{\ov \gm}_{\ov \all \ov \beta} \circ F^{-1}) F_\ast(X_{\ov \gm})$.     
\begin{lemma} \label{Lem::int_cond}    
Let $ X = \{ X_{\ov \all} = \pa_{\ov \all} + A_{\ov \all}^{\beta} \pa_\beta \}_{\all =1}^n$ be a $C^1$ almost complex structure. Then $X$ is formally integrable if and only if  
\[
  \dbar A = [A, \pa A], \quad [A, \pa A] = (\pa A) A - A(\pa A)^T. 
\] 
\end{lemma}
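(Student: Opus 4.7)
The plan is to compute the Lie bracket $[X_{\ov\alpha}, X_{\ov\beta}]$ directly in coordinates, observe that the resulting vector field has a very restricted form, and then recognize the identity as the compact equation $\dbar A=[A,\pa A]$ in the statement.

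First I would expand $X_{\ov\alpha}X_{\ov\beta}=(\pa_{\ov\alpha}+A^\gamma_{\ov\alpha}\pa_\gamma)(\pa_{\ov\beta}+A^\delta_{\ov\beta}\pa_\delta)$ by the Leibniz rule and antisymmetrize in $(\ov\alpha,\ov\beta)$. Every second-order piece cancels: the $\pa_{\ov\alpha}\pa_{\ov\beta}$ and mixed $A^\delta_{\ov\beta}\pa_{\ov\alpha}\pa_\delta$, $A^\gamma_{\ov\alpha}\pa_\gamma\pa_{\ov\beta}$ terms because coordinate partials commute, and the purely holomorphic product $A^\gamma_{\ov\alpha}A^\delta_{\ov\beta}\pa_\gamma\pa_\delta$ after relabeling $\gamma\leftrightarrow\delta$. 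What survives is a first-order vector field with only a holomorphic principal part,
\[
[X_{\ov\alpha},X_{\ov\beta}]=\Phi^\delta_{\ov\alpha\ov\beta}\,\pa_\delta,\qquad \Phi^\delta_{\ov\alpha\ov\beta}:=\bigl(\pa_{\ov\alpha}A^\delta_{\ov\beta}-\pa_{\ov\beta}A^\delta_{\ov\alpha}\bigr)+\bigl(A^\gamma_{\ov\alpha}\pa_\gamma A^\delta_{\ov\beta}-A^\gamma_{\ov\beta}\pa_\gamma A^\delta_{\ov\alpha}\bigr).
\]

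The key observation is that this bracket carries no $\pa_{\ov\gamma}$ component at all. Since $X_{\ov\gamma}=\pa_{\ov\gamma}+A^\delta_{\ov\gamma}\pa_\delta$ is the unique frame element whose $\pa_{\ov\gamma}$-coefficient equals $1$, matching the integrability expansion $[X_{\ov\alpha},X_{\ov\beta}]=C^{\ov\gamma}_{\ov\alpha\ov\beta}X_{\ov\gamma}$ against the computed expression forces every $C^{\ov\gamma}_{\ov\alpha\ov\beta}$ to vanish. Hence formal integrability of $X$ is equivalent to the scalar identity $\Phi^\delta_{\ov\alpha\ov\beta}\equiv 0$ for all indices, with the converse trivial (zero structure constants).

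It then remains to identify $\Phi=0$ with $\dbar A=[A,\pa A]$. The linear piece $\pa_{\ov\alpha}A^\delta_{\ov\beta}-\pa_{\ov\beta}A^\delta_{\ov\alpha}$ is exactly the $(\ov\alpha,\ov\beta,\delta)$-component of $\dbar A$ viewed as a $T^{1,0}$-valued $(0,2)$-form. The quadratic piece is the unique contraction of $A$ with $\pa A$ that produces a $T^{1,0}$-valued $(0,2)$-form, and unpacking the stated matrix convention $(\pa A)A-A(\pa A)^T$, with $A=[A^\delta_{\ov\alpha}]$ and the holomorphic differentiation index of $\pa A$ contracted against the holomorphic index of $A$ (the transpose swapping which factor is differentiated), reproduces $A^\gamma_{\ov\beta}\pa_\gamma A^\delta_{\ov\alpha}-A^\gamma_{\ov\alpha}\pa_\gamma A^\delta_{\ov\beta}$. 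Moving this to the right-hand side of $\Phi=0$ then yields the desired formula.

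The only real subtlety I anticipate is the final index-tracking, verifying that the abbreviated matrix expression $(\pa A)A-A(\pa A)^T$ produces precisely the correct signs and contractions; the Lie bracket expansion itself and the observation that $[X_{\ov\alpha},X_{\ov\beta}]$ carries no $\pa_{\ov\cdot}$ component are both elementary.
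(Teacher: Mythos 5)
Your proof is correct and takes essentially the same route as the paper: both compute the Lie bracket $[X_{\ov\alpha},X_{\ov\beta}]$ explicitly in coordinates, observe that the result carries no $\pa_{\ov\cdot}$ component so that integrability forces every structure constant $C^{\ov\gamma}_{\ov\alpha\ov\beta}$ to vanish, and then repackage the resulting scalar identity as the matrix equation $\dbar A=(\pa A)A-A(\pa A)^T$. The paper spells out the $(\beta,\gamma)$- and $(\beta,\eta)$-entries of $\dbar A^\alpha$ and $\pa A^\alpha$ a bit more explicitly before asserting the matrix form, but this is exactly the index bookkeeping you flag as the only subtlety, and your treatment of it is consistent.
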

\begin{proof} 
 Let $X_{\ov \beta} = \pa_{\ov \beta} + A^{\all}_{\ov \beta} \pa_{\all}$, $X_{\ov \gm} = \pa_{\ov \gm} + A^\eta_{\ov \gm} \pa_\eta $. The integrability condition says that $[X_{\ov \beta}, X_{\ov \gm}] \in$ $\Span X_{\ov \beta} $. By an easy computation we obtain
 \begin{align*}
 [X_{\ov \beta}, X_{\ov \gm}] &= X_{\ov \beta} X_{\ov \gm}
 - X_{\ov \gm} X_{\ov \beta}
 \\ &= \left( \pp{A^\all_{\ov \gm}}{\ov z_\beta} - \pp{A^{\all}_{\ov \beta}}{\ov z_\gm} 
 + A^\eta_{\ov \beta} \pp{A^\all_{\ov \gm}}{z_\eta} 
 - A^{\eta}_{\ov \gm} \pp{A^\all_{\ov \beta}}{z_\eta}  
 \right)\pp{}{z_\all}. 
 \end{align*}
 If $ [X_{\ov \beta}, X_{\ov \gm}] = \sum_{\nu} c_{\beta \gm}^\nu X_{\ov \nu} $, then $c^{\nu}_{\beta \gm} = 0$ for all $\nu$. Hence for each $\all, \beta, \gm$, we have
 \begin{equation} \label{int_cond_coord}   
  \pp{A^\all_{\ov \gm}}{\ov z_\beta} - \pp{A^{\all}_{\ov \beta}}{\ov z_\gm}  = 
A^\eta_{\ov \gm} \pp{A^\all_{\ov \beta}}{z_\eta} -  A^\eta_{\ov \beta} \pp{A^\all_{\ov \gm}}{z_\eta}.    
 \end{equation} 
 Now for each $\all$, we can identify $A^\all$ as a $(0,1)$-form: $A^\all = \sum_\beta A^\all_{\ov \beta} d \ov z_\beta$, then   
 \begin{gather*}
 \dbar A^\all = \sum_{\beta < \gm} (\pa_{\ov \beta} A^\all_{\ov \gm} - \pa_{\ov \gm} A^\all_{\ov \beta}) d \ov z_\beta \we d \ov z_\gm 
  \\
 \pa A^\all = \sum_\eta (\pa_\eta A^\all_{\ov \beta}) d z_\eta \we d \ov z_\beta. 
 \end{gather*}  
If we view $\dbar A^\all$ as a matrix whose $(\beta, \gm)$ -entry is $\pa_{\ov \beta} A^\all_{\ov \gm} - \pa_{\ov \gm} A^\all_{\ov \beta}$, and $\pa A^\all$ as a matrix whose $(\beta,\eta)$-entry is $\pa_\eta A^\all_{\ov \beta}$, then \re{int_cond_coord} implies that $\dbar A = (\pa A) A - A (\pa A)^T$.   
\end{proof} 
As a special case of \rl{Lem::int_cond}, if $A = \{ A_{\ov \all}^\beta \}_{\all,\beta=1}^n$ is a constant matrix, then the structure $\{X_{\ov \all}=\pa_{\ov \all} + A_{\ov \all}^{\beta} \pa_\beta\}$ is formally integrable. 
In this case, one can find an invertible linear transformation $T$ such that $T_\ast(X_{\ov \all}) \in \Span \{ \pa_{\ov \all} \} $, without assuming that the norms of $A$ to be small. 

To end the subsection, we recall the homotopy formula constructed in \cite{Gong20} and \cite{S-Y25} for a strictly pseudoconvex domain $D$ with $C^2$ boundary. 
\begin{equation} \label{Hq}   
\begin{aligned}
   \Hc_q \var(z) &= \int_{\Uc} K^0_{0, q-1}(z,\cdot) \wedge \Ec \var
+ \int_{\Uc\sm \ov D} K^{01}_{0, q-1}(z,\cdot) \wedge [\dbar, \Ec] \var, \quad [\dbar, \Ec] \var = \dbar \Ec \var - \Ec \dbar \var. 
\end{aligned} 
\end{equation}
Here $\Uc$ is a neighborhood of the closure of $D$, and $\Ec = \Ec_D$ is Rychkov's universal extension operator for the domain $D$.  

In our iteration, we shall apply the above homotopy operator to a sequence of strictly 
pseudoconvex domains $D_j$, where the neighborhood $\Uc$ is fixed, and $\dist(D_j, \Uc)$ is bounded below by some positive constant for all $j \in \N$. 
\begin{prop}{\cite{Gong20, S-Y25}} \label{Prop::dbar}    
Let $D$ be a strictly pseudoconvex domain in $\C^n$ with $C^2$ boundary, and let $\Hc_q$ be the homotopy operator given by \re{Hq}. Then the following statements are true. 
 \begin{enumerate}[(i)]
  \item $\Hc_q: \La^r_{(0,q)}(\ov D) \to  \La^{r+\yh}_{(0,q-1)}(\ov D)$, for all $r >0$.  
  \item
  $\var = \dbar \Hc_q \var + \Hc_{q+1} \dbar \var$, for any $\var \in \La^r_{(0,q)}(\ov D)$ such that $\db \var \in \La^r_{(0,q+1)}(\ov D)$. 
 \end{enumerate}
 Furthermore, the operator norm of $\Hc_q$ is stable under small $C^2$ perturbation of the domain.  
\end{prop}  

\subsection{Smoothing operator on bounded Lipschitz domains}
In this subsection we construct a Moser-type smoothing operator on bounded Lipschitz domains.  
In \cite{Mo62}, Moser constructed a smoothing operator $L_t: C^0(U_0) \to C^\infty (D_0)$, where $D_0, U_0$ are open sets in $\R^d$ such that $D_0 \subset \subset U_0$. Assume that $\dist(bD_0, b U_0) = t_0>0$. Then $L_t$ is given by 
\[
  L_t u(x) = \int_{\R^d} u(x- y) \chi_t(y) \, dy, \quad x \in D_0, \; 0 < t < \frac{t_0}{C},  
\]
where $\chi_t(z) := t^{-d} \chi (z/t)$, $\supp \chi \subset \{ x \in \R^d: |x| < 1 \}$, $ \int \chi(z) \, dz = 1 $, and 
\[
  \int z^I \chi(z) \, dz = 0 , \quad 0 < |I| \leq M, \quad M < \infty. 
\]
Moser showed that the following estimate hold for $0 < t < t_0 /C$:  
\begin{gather} \nn 
  \| L_t u \|_{D_0,r} \leq C_{r,s} t^{s-r} \| u \|_{U_0,s}, \quad 0 \leq s \leq r < \infty; 
  \\ \label{Mo_sm_op_est}
  \| (I-L_t) u \|_{D_0,s} \leq C_{r,s} t^{r-s} \| u \|_{U_0,r}, \quad r \geq 0, \; 
  s \leq r <  s+M. 
\end{gather} 
For our smoothing operator, we do not require $u$ to be defined on a large domain $U_0$. Furthermore, we note that the smoothing operator $L_t$ depends on a finite parameter $M$, whereas our smoothing operator has no such dependency and satisfies the corresponding 
estimate \re{Mo_sm_op_est} for $M = \infty$. 

\begin{prop} \label{Prop::SmOp}
Let $\Om$ be a bounded Lipschitz domain in $\R^d$. Then there exist operators $S_t: \Ss'(\Om) \to C^\infty(\Om)$ such that for all $0 < s \leq r < \infty$. 
\begin{enumerate}[(i)]
  \item  
 $ | S_t u |_{\Om, r} \leq C t^{s-r} | u |_{\Om, s}$; 
 \item 
 $| (I - S_t) u |_{\Om, s} \lesssim C' t^{r-s} | u 
|_{\Om,r}$.     
\end{enumerate}
Here the constants $C,C'$ depend only on $r,s$ and the Lipschitz norm of $\Om$. 
\end{prop}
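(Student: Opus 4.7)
The plan is to build $S_t$ by composing Rychkov's universal extension operator $\mc E_\Om:\Ss'(\Om)\to\Ss'(\R^d)$ from \rp{Prop::Ext_opt_Lip} with a sharp Littlewood-Paley cutoff at the dyadic scale $2^J\sim 1/t$. Fix a classical Littlewood-Paley family $\la=(\la_j)_{j=0}^\infty\in\Cf(\R^d)$, so that each $\widehat{\la_j}$ is compactly supported in the annulus $\{2^{j-1}\leq|\xi|\leq 2^{j+1}\}$ for $j\geq 1$ and $\sum_{j=0}^\infty\la_j=\del_0$ in $\Ss'(\R^d)$. With $J:=J(t):=\lfloor\log_2(1/t)\rfloor$, I define
\[
  S_t u:=\Bigl(\sum_{j=0}^{J(t)}\la_j\ast\mc E_\Om u\Bigr)\Big|_\Om,
\]
which lies in $C^\infty(\Om)$ since convolving a tempered distribution with a Schwartz function yields a smooth function, and whose operator norms will depend only on $\lip(\Om)$ because both $\mc E_\Om$ and the Besov norm equivalence in \rp{Prop::space_property}(i) have that property.

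The key mechanism is the exact orthogonality $\la_k\ast\la_j=0$ whenever $|j-k|\geq 2$, coming from the compact Fourier supports. Writing $\widetilde{S_tu}:=\sum_{j\leq J}\la_j\ast\mc E_\Om u$ for the canonical $\R^d$-extension of $S_tu$, and noting that $\mc E_\Om u|_\Om=u$ together with $\sum_j\la_j=\del_0$ yields the extension $\widetilde{(I-S_t)u}:=\sum_{j>J}\la_j\ast\mc E_\Om u$, the disjoint-frequency observation immediately gives $\la_k\ast\widetilde{S_tu}=0$ for $k\geq J+2$ and $\la_k\ast\widetilde{(I-S_t)u}=0$ for $k\leq J-1$. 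In the surviving ranges the sums collapse to at most three terms, so Young's inequality together with the uniform bound $\|\la_k\|_{L^1}\ls 1$ yields $\|\la_k\ast\widetilde{S_tu}\|_{L^\infty(\R^d)}\ls\max_{|j-k|\leq 1}\|\la_j\ast\mc E_\Om u\|_{L^\infty(\R^d)}$, and similarly for $\widetilde{(I-S_t)u}$.

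The estimates then follow quickly. By \rp{Prop::space_property}(i), $\|\cdot\|_{\La^r(\R^d)}$ is equivalent to $\sup_k 2^{kr}\|\la_k\ast\cdot\|_{L^\infty(\R^d)}$, and by the definition of the restriction norm, $|f|_{\Om,r}\leq\|F\|_{\La^r(\R^d)}$ for every extension $F$ of $f$. Combined with $\|\mc E_\Om u\|_{\La^\si(\R^d)}\ls|u|_{\Om,\si}$ and the standard Littlewood-Paley bound $\|\la_j\ast\mc E_\Om u\|_{L^\infty(\R^d)}\ls 2^{-j\si}|u|_{\Om,\si}$ for any $\si>0$, the frequency formulas above give
\[
  |S_tu|_{\Om,r}\ls\sup_{k\leq J+1}2^{k(r-s)}|u|_{\Om,s}\ls 2^{J(r-s)}|u|_{\Om,s}\approx t^{s-r}|u|_{\Om,s},
\]
proving (i); using $s\leq r$ so that $2^{k(s-r)}$ is maximized at $k\approx J$, one gets
\[
  |(I-S_t)u|_{\Om,s}\ls\sup_{k\geq J}2^{k(s-r)}|u|_{\Om,r}\ls 2^{J(s-r)}|u|_{\Om,r}\approx t^{r-s}|u|_{\Om,r},
\]
proving (ii). There is no deep obstacle here; the only care required is to verify that the constants in the norm equivalence and in the extension operator depend only on $\lip(\Om)$, which is built into Rychkov's construction of $\mc E_\Om$ via a partition of unity subordinate to a Lipschitz atlas.
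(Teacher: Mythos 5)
Your proof is correct, but it is a genuinely different construction from the paper's. You define $S_t u$ by extending with Rychkov's operator $\Ec_\Om$, applying a sharp Littlewood--Paley truncation $\sum_{j\le J}\la_j\ast(\cdot)$ on $\R^d$ with a classical family $\la\in\Cf$ (exact vanishing $\la_k\ast\la_j=0$ for $|j-k|\ge 2$), and restricting back to $\Om$; the estimates then follow from the $\R^d$ Besov characterization, the boundedness of $\Ec_\Om$, and the definition of the restriction norm, with constants controlled by $\lip(\Om)$ as claimed. The paper instead builds $S_t$ \emph{intrinsically}: on a special Lipschitz domain it uses Rychkov's $\Kb$-Littlewood--Paley pair $(\phi_j,\psi_j)$ with supports in the cone $-\Kb$, sets $S^\Kb_t u=\sum_{k\le N}\psi_k\ast\phi_k\ast u$ (so only values of $u$ in $\om$ are ever used), proves (i)--(ii) via the intrinsic characterization $\|f\|_{\Bs^s_{\infty,\infty}(\om)}\approx\|(2^{js}\phi_j\ast f)\|_{\ell^\infty(L^\infty(\om))}$ together with the almost-orthogonality estimate of \rp{Prop::LP_exp_decay} (needed because $\hht\phi_j$ are not compactly supported), and then patches with a partition of unity for a general bounded Lipschitz domain. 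For \rp{Prop::SmOp} alone your route is shorter and the orthogonality is exact rather than only exponentially decaying; what the paper's construction buys is precisely what is used afterwards: since $S^\Kb_t$ is a pure convolution acting on $u$ itself, $[\na,S^\Kb_t]\equiv 0$ (\rl{Lem:: D_St_nu_comm}), and the commutator estimate \rp{Prop::D_St_comm_est} reduces to cutoff terms of the form $(S^\nu_t-I)((\pa_i\chi_\nu)u)$. With your operator one has instead $[\pa_i,S_t]u=\bigl(\sum_{j\le J}\la_j\ast[\pa_i,\Ec_\Om]u\bigr)\big|_\Om$, which reintroduces the extension commutator $[\na,\Ec_\Om]$ that the paper is explicitly designed to avoid, so the key later estimate would not follow in the same way from your construction, even though the statement of \rp{Prop::SmOp} itself is fully proved by your argument.
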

\begin{proof} 
First we prove the statements when the domain is a special Lipschitz domain of the form $\om = \{ (x', x_d) \in \R^d: x_d > \rho(x') \}$, where $|\na \rho|_{L^\infty(\R^{d-1})}<1$. In particular, we have $\om + \Kb = \om$. $N \in \N$.
Let $(\phi_j, \psi_j)_{j=0}^\infty$ be a $\Kb$-Littlewood-Paley pair (\rd{Defn::K-LP_pair}).  We define the following smoothing operator $S_t^\Kb$ on $\Ss'(\om)$: 
\begin{equation} \label{SmOpt_L-P}   
 S_t^\Kb u:= \sum_{k =0}^{\fl{\log_2 t^{-1}}} \psi_k \ast \phi_k \ast u.   
\end{equation} 
In particular if $t \in (2^{-N-1}, 2^{-N})$, then 
the above sum becomes $\sum_{k=0}^N$. 
Using the equivalence of the H\"older-Zygmund $\La^s$ norm and the Besov $B^s_{\infty, \infty}$-norm, it suffices to prove that 
\[
 \sup_{j \in \N} 2^{jr} |\la_j \ast (S_t^\Kb u) |_{L^\infty(\om)}  
 \lesssim t^{s-r} \sup_{j \in \N} 2^{js} | \phi_j \ast u |_{L^\infty(\om)}. 
\]
where $\{ \la_j \}_{j\in \N}, \{ \phi_j \}_{j\in \N}$ are regular Littlewood-Paley families (\rd{Defn::Reg_LP}).  
We have 
\begin{align*}
 \sup_{j \in \N} 2^{jr} |\la_j \ast (S_t^\Kb u) |_{L^\infty(\om)}
&= \sup_{j \in \N} 2^{jr} \left| \la_j \ast \sum_{k=0}^N \psi_k \ast \phi_k \ast u \right|_{L^\infty(\om)}  
\\ &\leq \sup_{j \in \N} 2^{jr} \sum_{k=0}^N |\la_j \ast \psi_k \ast \phi_k \ast u|_{L^\infty(\om)} 
\\ &\leq \sup_{j \in \N} 2^{j(r-s)} 2^{js} \sum_{k=0}^N | \la_j \ast \psi_k |_{L^1(-\Kb)} | \phi_k \ast u|_{L^\infty(\om)}, 
\end{align*} 
where in the last inequality we used Young's inequality. By \rp{Prop::LP_exp_decay} with $N=0$, the last expression is bounded up to a constant multiple $C=C(M)$ by 
\[
 \sup_{j \in \N} 2^{j(r-s)} 2^{js} \sum_{k=0}^N 
2^{-M|j-k|} | \phi_k \ast u|_{L^\infty(\om)}
 = 2^{N(r-s)} \sup_{j \in \N} 2^{(j-N) (r-s)} 
 2^{js} \sum_{k=0}^N 
2^{-M|j-k|} | \phi_k \ast u|_{L^\infty(\om)}. 
\]
If $j <N$, then $2^{(j-N) (r-s)}2^{-\frac{M}{2} |j-k|} < 1$. If $j \geq N$, then $|j-k| = j-k \geq j-N$ for $k \leq N$, so $2^{-\frac{M}{2}|j-k|} \leq 2^{-\frac{M}{2}(j-N)}$. It follows that 
$2^{(j-N)(r-s)} 2^{-\frac{M}{2} |j-k|} \leq 2^{(j-N)(r-s-\frac{M}{2})} < 1$ if we choose $M > 2(r-s)$. In any case, the above estimate leads to 
\begin{equation} \label{St_est_1}  
\begin{aligned} 
 \sup_{j \in \N} 2^{jr} |\la_j \ast (S_t^\Kb u) |_{L^\infty(\om)}
&\lesssim_{r,s} 2^{N(r-s)} \sup_{j \in \N} 2^{js} \sum_{k=0}^N 2^{-\frac{M}{2} |j-k|} |\phi_k \ast u |_{L^\infty(\om)}
\\ &\leq 2^{N(r-s)} \sup_{j \in \N} \sum_{k=0}^N 2^{(s-\frac{M}{2}) |j-k|} 2^{ks} |\phi_k \ast u |_{L^\infty(\om)}, 
\end{aligned}  
\end{equation}
where we take $M> 2s$. Let 
\[
  u[a]:= 2^{|a| (s-\frac{M}{2})}, \quad a \in \Z,  \qquad v[b] :=  2^{bs} |\phi_b \ast f|_{L^\infty(\om)}, \quad b \in \N.  
\]
We denote $\left| u \right|_{l^1(\Z)}:= \sum_{a \in \Z} \left| u[a] \right|$ and $\left| v \right|_{l^\infty(\N)}:= \sup_{b \in \N} \left| u[b] \right|$. 
Then 
\begin{align*}
\sup_{j \in \N} \sum_{k=0}^N 2^{(s-\frac{M}{2}) |j-k|} 2^{ks} |\phi_k \ast u |_{L^\infty(\om)} 
 \leq |u|_{l^1(\Z)} |v|_{l^\infty(\N)} 
\lesssim_{r,s} |v|_{l^\infty(\N)} = \sup_{j \in \N} 2^{js} |\phi_j \ast f|_{L^\infty(\om)}.  
\end{align*}
Thus we get from \re{St_est_1}
\begin{align*}
  \sup_{j \in \N} 2^{jr} |\la_j \ast (S_t^\Kb u) |_{L^\infty(\om)} 
&\lesssim_{r,s} 2^{N(r-s)} \sup_{j \in \N} 2^{js} |\phi_j \ast f|_{L^\infty(\om)}
\\ &= t^{s-r} \sup_{j \in \N} 2^{js} |\phi_j \ast f|_{L^\infty(\om)}, \quad s \leq r, \quad t = 2^{-N}. 
\end{align*}
In other words, we have shown that $|S_t^\Kb u|_{\La^r(\ov \om)} \lesssim_{r,s} t^{s-r} |u|_{\La^s(\ov \om)}$, $s \leq r$. 
\\[10pt] 
(ii) 
From \re{SmOpt_L-P} we have 
\[
  (I-S_t^\Kb) u= \sum_{k > N} \psi_k \ast \phi_k \ast u. 
\] 
It suffices to show that 
\[
  \sup_{j \in \N} 2^{js} |\la_j \ast [(I-S_t^\Kb)u]|_{L^\infty(\om)} \lesssim t^{r-s} \sup_{j \in \N} 2^{js} |\phi_j \ast u|_{L^\infty(\om)}.     
\] 
We have 
\begin{align*}
  \sup_{j \in \N} 2^{js} |\la_j \ast [(I-S_t^\Kb)u]|_{L^\infty(\om)} 
&= \sup_{j \in \N} 2^{js} \left| \la_j \ast \sum_{k=N+1}^\infty \psi_k \ast \phi_k \ast u \right|_{L^\infty(\om)} 
\\ &\leq 2^{js} \sup_{j \in \N} \sum_{k=N+1}^\infty |\la_j \ast \psi_k |_{L^1(-\Kb)} |\phi_k \ast u|_{L^\infty(\om)}. 
\end{align*}
By \rp{Prop::LP_exp_decay}, the last expression is bounded up to a constant multiple $C=C(M)$ by 
\[
\sup_{j \in \N} 2^{j(s-r)} 2^{jr} \sum_{k = N+1}^\infty 
2^{-M|j-k|} | \phi_k \ast u|_{L^\infty(\om)}
 = 2^{N(s-r)} \sup_{j \in \N} 2^{(j-N) (s-r)} 
 2^{js} \sum_{k=N+1}^\infty 
2^{-M|j-k|} | \phi_k \ast u|_{L^\infty(\om)}. 
\] 
If $j \geq N$, then $2^{(j-N)(s-r)} \leq 1$. If $j < N$, then $ -(j-N)= |j-N| \leq |j-k|$ for all $k \geq N+1$. Hence $2^{(j-N)(s-r)-\frac{M}{2}|j-k|} \leq 2^{-(j-N)(r-s) - \frac{M}{2}|j-k|} \leq 2^{|j-k|(r-s-\frac{M}{2})} < 1$, where we choose $M > 2(r-s)$. In all cases, we get from the above estimates that 
\begin{align*}
 \sup_{j \in \N} 2^{js} |\la_j \ast [(I-S_t^\Kb)u]|_{L^\infty(\om)} &\lesssim_{r,s} 2^{N(s-r)} \sup_{j \in \N} 2^{jr} \sum_{k=N+1}^\infty 2^{-\frac{M}{2} |j-k|} |\phi_k \ast u|_{L^\infty(\om)}, \quad t = 2^{-N}. 
\end{align*}
The rest of the estimates follow identically as in (i), and consequently we prove that $|(I-S_t^\Kb)u|_{\om,s}  \lesssim_{r,s} t^{r-s} |u|_{\om, r}$ for $0 < s \leq r $.   

Finally we prove both (i) and (ii) for general bounded Lipschitz domains. For this we use partition of unity. Take an open covering $\{ U_{\nu} \}_{\nu=0}^M$ of $\Om$ such that 
\[
   U_0  \subset\subset \Om, \quad b \Om \seq \bigcup_{ \nu=1}^M U_{\nu},  \quad U_{\nu} \cap \Om = U_{\nu} \cap \Phi_\nu( \om_{\nu}) ,  \quad \nu=1, \dots, M. 
\]
Here each $\om_{\nu}$ is a special Lipschitz domain of the form $\om_{\nu} = \{ x_d > \rho_\nu (x') \} $, with $|\rho_\nu|_{L^\infty(\R^{d-1})} < 1$, and $\Phi_\nu$, $1\le \nu\le M$ are invertible affine linear transformations. Here we note that $|D\Phi_\nu|_{L^\infty}$ is bounded (up to a constant) by the Lipschitz norm of $\Om$.  

If $f$ has compact support in $\Om$, we define the smoothing operator $S^0_t$ by  
\[
  S^0_t f = \sum_{k=0}^N \eta_k \ast \thh_k \ast f, \quad t= 2^{-N}.  
\] 
Here we can choose any Littlewood-Paley pair $(\thh_j, \eta_j)_{j=0}^\infty$ with $\thh \in \Df$, $\eta \in \Gf$ and $\sum_{j=0}^\infty \thh_j = \sum_{j=0}^\infty \eta_j \ast \thh_j = \del_0$. Then the same proof as above shows that $| S_t^0 f|_{\Om, r} \lesssim t^{s-r} | f |_{\Om, s}$ and $| (I-S_t^0) f |_{\Om,s} \lesssim t^{r-s} | f |_{\Om,r}$ for $0 < s \leq r$.

Fix a partition of unity $\{ \chi_{\nu} \}_{\nu=0}^M$ associated with $\{U_{\nu} \}_{\nu=0}^M$, such that $\chi_{\nu} \in C^{\infty}_c(U_{\nu}) $ and $\chi_0 + \sum_{\nu=1}^M \chi_{\nu}^2 = 1$. For each $1 \leq \nu \leq M$, we have the property $\om_{\nu} + \Kb = \om_{\nu}$, where $\Kb:= \{ x\in \R^d: x_d >|x'| \}$. Let $S_t^\Kb$ be given as above, we define 
\begin{equation} \label{St_formula}
  S_t u := S^0_t (\chi_0 u) + \sum_{\nu=1}^M \chi_\nu S_t^\nu ( \chi_\nu u),   
\end{equation}
where $ S^\nu_t g:= [ S_t^\Kb(g \circ \Phi_\nu)] \circ \Phi_\nu^{-1} $, $1 \leq \nu \leq M$. 

Applying the estimates for $S_t^0$ and $S_t^\Kb$, we get
\begin{align*}
  | S_t u |_{\Om, r}  
  &\lesssim t^{s-r} \left( | \chi_0 u |_{\Om, s} + | 
(\chi_\nu u) \circ \Phi_\nu |_{\om, s} \right)  
 \lesssim t^{s-r} | u |_{\Om,s}, \quad 0 < s \leq r, 
  \end{align*}
where the constant depends only on $r,s$ and $\lip(\Om)$.

On the other hand, since $u = \chi_0 u + \sum_{\nu=1}^M \chi_\nu^2 u$ we have 
\begin{align*}
  | (I - S_t) u |_{\Om, s}
  &\leq | (I - S^0_t) (\chi_0 u) |_{\Om, r} 
  + \sum_{\nu=1}^M |  \chi_\nu (I - S^\nu_t)(\chi_\nu u) |_{\Om, s} 
  \\ &\lesssim t^{r-s} \left( | \chi_0 u |_{\Om, r} 
  + | (\chi_\nu u) \circ \Phi_\nu |_{\om,r} \right)
\lesssim t^{r-s} | u |_{\Om, r}. 
  \quad 0 < s \leq r,  
\end{align*}
where the constant depends only on $r,s$ and $\lip(\Om)$. 
\end{proof}  
\begin{lemma} \label{Lem:: D_St_nu_comm}   
 Let $\Om$ be a bounded Lipschitz domain in $\R^d$ and $\{S_t^\nu\}_{\nu=1}^M$ be the smoothing operator constructed in the proof of \rp{Prop::SmOp}. Denote $\pa_i = 
\pp{}{x_i}$, $i = 1,2,\dots, d$. Then $[\pa_i,S_t^\nu] (\chi_\nu f) \equiv 0$ for all $f \in \La^r(\ov \Om)$, $r > 1$.  
\end{lemma}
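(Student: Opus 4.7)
The plan is to exploit three structural facts: (a) $\chi_\nu \in C^\infty_c(U_\nu)$, so $\chi_\nu f$ extends by zero to a compactly supported $C^1$ function on $\R^d$ (here we use $r>1$); (b) $S_t^\Kb$ is a finite sum of convolutions against fixed Schwartz kernels, hence commutes with partial differentiation on compactly supported $C^1$ functions on $\R^d$; and (c) $\Phi_\nu$ is invertible affine linear, so $D\Phi_\nu$ and $D\Phi_\nu^{-1}$ are \emph{constant} matrices.

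The strategy is a direct chain-rule computation. Set $g := (\chi_\nu f) \circ \Phi_\nu$, which by (a) is compactly supported and $C^1$ on $\R^d$. By (b), one has $\pa_j(S_t^\Kb g) = S_t^\Kb(\pa_j g)$ on all of $\R^d$. Applying the chain rule to $S_t^\nu(\chi_\nu f) = (S_t^\Kb g)\circ\Phi_\nu^{-1}$ yields
\begin{equation*}
\pa_i [S_t^\nu(\chi_\nu f)](x) = \sum_{j} S_t^\Kb(\pa_j g)(\Phi_\nu^{-1}(x))\,(D\Phi_\nu^{-1})_{ji}.
\end{equation*}
A second application of the chain rule writes $\pa_j g = \sum_k [(\pa_k(\chi_\nu f))\circ \Phi_\nu]\,(D\Phi_\nu)_{kj}$, and by (c) the scalar factors $(D\Phi_\nu)_{kj}$ pass through the linear operator $S_t^\Kb$. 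Substituting and using $(D\Phi_\nu)(D\Phi_\nu^{-1}) = I$ to collapse the sum over $j$ to $\delta_{ki}$ gives $\pa_i[S_t^\nu(\chi_\nu f)](x) = S_t^\nu(\pa_i(\chi_\nu f))(x)$, which is the desired identity.

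There is no substantive obstacle here; the argument is essentially bookkeeping. The only point worth stressing is that the constancy of $D\Phi_\nu$ is essential: if $\Phi_\nu$ were a general Lipschitz diffeomorphism rather than affine, the chain-rule factors would be variable functions of $y$ and could not be pulled across the convolution operator $S_t^\Kb$, producing genuinely nonzero commutator terms. The compact support of $\chi_\nu f$ (and hence of $g$) is what justifies treating the convolutions as operators on all of $\R^d$, so that convolution-derivative commutation in (b) applies without boundary issues.
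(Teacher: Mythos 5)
The chain-rule bookkeeping and the observation that the constancy of $D\Phi_\nu$ is what lets the derivative factors pass through $S_t^\Kb$ are both correct and match the paper's computation. However, your justification for step (b) — that $S_t^\Kb$ commutes with $\pa_j$ — rests on claim (a), which is false. The cutoff $\chi_\nu$ is a member of the boundary partition of unity, so $\supp\chi_\nu$ meets $b\Om$, and on $b\Om \cap \supp\chi_\nu$ there is no reason for $f$ to vanish. Hence $\chi_\nu f$ (and $g=(\chi_\nu f)\circ\Phi_\nu$) does \emph{not} extend by zero to a continuous, let alone $C^1$, function on $\R^d$: the zero extension has a jump across the boundary. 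Consequently ``treating the convolutions as operators on all of $\R^d$'' is not justified this way.

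The correct reason that $[\na, S_t^\Kb]=0$ on the special Lipschitz domain $\om_\nu$ has nothing to do with extension by zero; it is the cone structure built into the $\Kb$-Littlewood--Paley pair. Because $\supp\phi_k,\ \supp\psi_k\subset-\Kb$ and $\om_\nu+\Kb=\om_\nu$, for $x\in\om_\nu$ the integral defining $(\psi_k\ast\phi_k\ast g)(x)$ only samples $g$ at points $x-y$ with $y\in-\Kb$, i.e.\ at points of $\om_\nu$. So $S_t^\Kb$ is intrinsically an operator on functions defined on $\om_\nu$, and differentiating under the integral sign for $g\in C^1(\ov{\om_\nu})$ gives $\pa_j(S_t^\Kb g)=S_t^\Kb(\pa_j g)$ on $\om_\nu$ directly. (Alternatively one may take an arbitrary $C^1$ extension $Eg$ of $g$, commute $\pa_j$ with the $\R^d$-convolution, and note that both sides restricted to $\om_\nu$ are independent of the extension by the same cone argument — but extension by zero specifically does not produce a $C^1$ function and cannot be used here.) With this corrected justification for (b), your substitution argument goes through and coincides with the paper's add-and-subtract computation.
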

\begin{proof}
 Recall that $S_t^\nu g:= [S^\Kb_t (g \circ \Phi_\nu)] \circ \Phi_\nu^{-1} $, where $S^{\Kb}_t$ is the smoothing operator defined on $\Ss'(\om_\nu)$ and given by \re{SmOpt_L-P}. Hence we have  
 \begin{align*}
  [\pa_i,S_t^\nu] (\chi_\nu f) 
  &= \pa_i(S_t^\nu (\chi_\nu f)) - S_t^\nu (\pa_i(\chi_\nu f)) 
  \\ &= \pa_i( S^\Kb_t [(\chi_\nu f) \circ \Phi_\nu] \circ \Phi_\nu^{-1}) - S^\Kb_t [\pa_i(\chi_v f) \circ \Phi_\nu] \circ \Phi_\nu^{-1} 
  \\ &= (\na S^\Kb_t[(\chi_\nu f) \circ \Phi_\nu] ) \circ \Phi_\nu^{-1} \cdot \pa_i \Phi_\nu^{-1}  
- S^\Kb_t[\na ( (\chi_\nu f) \circ \Phi_\nu)] \circ \Phi_\nu^{-1} \cdot \pa_i\Phi_\nu^{-1} 
\\ &\quad + S^\Kb_t[\na ( (\chi_\nu f) \circ \Phi_\nu)] \circ \Phi_\nu^{-1} \cdot \pa_i \Phi_\nu^{-1} - S^\Kb_t [\pa_i (\chi_\nu f) \circ \Phi_\nu ] \circ \Phi_\nu^{-1}
\\ &= [\na, S^\Kb_t] ( (\chi_\nu f) \circ \Phi_\nu) \circ \Phi_\nu^{-1} \cdot \pa_i\Phi_\nu^{-1}, 
 \end{align*}
where in the last step we used the fact that $\Phi_\nu$ is a linear transformation so that 
\begin{align*} 
S^\Kb_t[\na ( (\chi_{\nu} f) \circ \Phi_{\nu})] \circ \Phi_{\nu}^{-1} \cdot \pa_i \Phi_{\nu}^{-1} 
&= \begin{pmatrix}
 \sum_{j=1}^n \pp{\Phi_\nu^j}{x_1} S^\Kb_t \left(\pp{(\chi_\nu f)}{y_j} \circ \Phi_\nu \right)  
 \\  
 \vdots
 \\ 
\sum_{j=1}^n \pp{\Phi_\nu^j}{x_n} S^\Kb_t \left(\pp{(\chi_\nu f)}{y_j} \circ \Phi_\nu \right) 
\end{pmatrix} 
\circ \Phi_\nu^{-1} 
\cdot 
\begin{pmatrix}
  \pa_i (\Phi_\nu^{-1})^1
  \\
  \vdots 
  \\ 
  \pa_i (\Phi_\nu^{-1})^n
\end{pmatrix}
\\&= \sum_{j=1}^n S^\Kb_t \left(\pp{(\chi_\nu f)}{y_j} \circ \Phi_\nu \right) \circ \Phi_\nu^{-1} 
\cdot \left( \sum_{\all=1}^n 
\pp{\Phi^j_\nu}{x_\all} \circ \Phi_\nu^{-1} \cdot \pp{(\Phi^{-1}_\nu)^\all}{y_i} \right) 
\\ &= S^\Kb_t \left(\pp{(\chi_\nu f)}{y_i} \circ \Phi_\nu \right) \circ \Phi_\nu^{-1}. 
\end{align*} 
Now since $S^\Kb_t u = \sum_{k=0}^{\fl{\log_2 t^{-1}}} \psi_k \ast \phi_k \ast u$ is a convolution operator, we have $[\na, S^\Kb_t] \equiv 0$ on $\om_\nu$. Thus $[\pa_i,S_t^\nu](\chi_\nu f) \equiv 0$. 
\end{proof}

\begin{prop} \label{Prop::D_St_comm_est} 
 Let $\Om$ be a bounded Lipschitz domain in $\R^d$ and let $S_t$ be the smoothing operator constructed in the proof of \rp{Prop::SmOp}. Denote $\pa_i = 
\pp{}{x_i}$, $i = 1,2,\dots, d$. Then for all $u \in \La^r( \ov \Om)$ with $r > 1$, the following holds  
 \begin{equation} \label{D_St_comm_est} 
    \left| [\pa_i, S_t] u \right|_{\Om,s} \leq C t^{r-s} | u |_{\Om,r}, \quad 0 < s \leq r.  
 \end{equation}
Here the constant $C$ depends only $r,s$ and the Lipschitz norm of the domain.  
\end{prop}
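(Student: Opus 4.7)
The plan is to expand $[\pa_i, S_t]u$ via Leibniz from the formula \re{St_formula}, invoke \rl{Lem:: D_St_nu_comm} to eliminate the ``inner'' commutator in each patch, then exploit the partition-of-unity identity $\chi_0 + \sum_\nu \chi_\nu^2 \equiv 1$ so that the leading-order ($t \to 0$) contribution cancels. What will remain carries a factor $I-S_t^0$ or $I-S_t^\nu$, to which \rp{Prop::SmOp}(ii) can be applied to produce the claimed $t^{r-s}$ gain.

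Concretely, from \re{St_formula} I compute
\begin{align*}
[\pa_i, S_t]u
&= \bigl[\pa_i S_t^0(\chi_0 u) - S_t^0(\chi_0 \pa_i u)\bigr]
+ \sum_{\nu=1}^M \bigl[\pa_i\bigl(\chi_\nu S_t^\nu(\chi_\nu u)\bigr) - \chi_\nu S_t^\nu(\chi_\nu \pa_i u)\bigr].
\end{align*}
Since $S_t^0$ is a classical convolution it commutes with $\pa_i$, so the first bracket simplifies to $S_t^0((\pa_i \chi_0) u)$. For each $\nu$-summand I expand Leibniz on $\pa_i(\chi_\nu S_t^\nu(\chi_\nu u))$ and rewrite $\chi_\nu \pa_i u = \pa_i(\chi_\nu u) - (\pa_i \chi_\nu) u$; the resulting cross term $\chi_\nu [\pa_i, S_t^\nu](\chi_\nu u)$ vanishes by \rl{Lem:: D_St_nu_comm}. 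This yields
\[
[\pa_i, S_t]u = S_t^0\bigl((\pa_i\chi_0) u\bigr) + \sum_\nu (\pa_i\chi_\nu) S_t^\nu(\chi_\nu u) + \sum_\nu \chi_\nu S_t^\nu\bigl((\pa_i\chi_\nu) u\bigr).
\]

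Next I substitute $S_t^0 = I - (I-S_t^0)$ and $S_t^\nu = I - (I - S_t^\nu)$ into each term. The collected identity contributions sum to
\[
\Bigl[\pa_i \chi_0 + 2 \sum_\nu \chi_\nu \pa_i \chi_\nu\Bigr] u
= \pa_i\Bigl(\chi_0 + \sum_\nu \chi_\nu^2\Bigr) u = 0,
\]
leaving
\[
[\pa_i, S_t]u = -(I-S_t^0)\bigl((\pa_i\chi_0) u\bigr) - \sum_\nu (\pa_i\chi_\nu)(I-S_t^\nu)(\chi_\nu u) - \sum_\nu \chi_\nu (I-S_t^\nu)\bigl((\pa_i\chi_\nu) u\bigr).
\]

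It then suffices to apply \rp{Prop::SmOp}(ii) to each term, combined with the product rule \re{H-Z_product} and the smoothness of $\chi_\nu$. For the first term, $|(I-S_t^0)((\pa_i\chi_0) u)|_{\Om, s} \ls t^{r-s} |(\pa_i\chi_0) u|_{\Om, r} \ls t^{r-s} |u|_{\Om, r}$. For the patch terms, $S_t^\nu$ inherits the Moser-type estimate of \rp{Prop::SmOp}(ii) from $S_t^\Kb$ via the fixed affine change of coordinates $\Phi_\nu$, and since $\pa_i \chi_\nu$ and $\chi_\nu$ are compactly supported in $U_\nu \cap \Phi_\nu(\om_\nu) \subset \Om$, the $\La^s(\Om)$ norm of each patch term is controlled by the corresponding $\La^s(\Phi_\nu(\om_\nu))$ norm through extension by zero, again giving the bound $t^{r-s}|u|_{\Om, r}$. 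The main obstacle is essentially bookkeeping — tracking the dependence of constants on $\lip(\Om)$ and verifying the norm equivalences across patches — since the essential analytic input (\rp{Prop::SmOp}(ii)) and the essential algebraic cancellation (from the partition of unity together with \rl{Lem:: D_St_nu_comm}) are both already in hand.
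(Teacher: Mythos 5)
Your proposal is correct and follows essentially the same route as the paper: expand via Leibniz from \re{St_formula}, kill the inner commutators using the convolution property of $S_t^0$ and \rl{Lem:: D_St_nu_comm}, use $\pa_i(\chi_0 + \sum_\nu \chi_\nu^2) = 0$ to subtract off the identity contribution, and then apply the $(I-S_t^\bullet)$ estimate from the proof of \rp{Prop::SmOp}. The only cosmetic difference is that the paper writes $(S_t^\bullet - I)$ where you write $-(I - S_t^\bullet)$; the decomposition, cancellation, and final estimates are the same.
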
  
\begin{proof}
  By the formula for $S_t$ \re{St_formula}, we can write 
  \begin{align*}
    [\pa_i, S_t] u 
    &= \pa_i S_t u - S_t \pa_i u
    \\ 
    &= \pa_i S_t^0(\chi_0 u) + \sum _{\nu=1}^M 
    \pa_i [\chi_\nu S^\nu_t (\chi_\nu u)] 
    - S_t^0 (\chi_0(\pa_i u )) - \sum_{\nu=1}^M \chi_\nu S_t^\nu (\chi_\nu (\pa_i u))
    \\ &= \{ \pa_i S_t^0(\chi_0 u) - S_t^0 \pa_i(\chi_0 u) \} + S_t^0( (\pa_i \chi_0) u) 
    + \sum_{\nu=1}^M (\pa_i \chi_\nu) S_t^\nu (\chi_\nu u) 
    \\ &\quad+ \sum_{\nu=1}^M \chi_\nu \pa_i S^\nu_t(\chi_\nu u) - \chi_\nu S_t^\nu \pa_i(\chi_\nu u) + \sum_{\nu=1}^M \chi_\nu S_t^\nu((\pa_i \chi_\nu)u)
    \\ &= S_t^0 ((\pa_i \chi_0) u) + \sum_{\nu=1}^M (\pa_i \chi_\nu) S_t^\nu (\chi_\nu u) + \sum_{\nu=1}^M \chi_\nu S_t^\nu((\pa_i \chi_\nu)u). 
  \end{align*}
Here to get the last line we use $[\pa_i, S_t^0](\chi_0 u) \equiv 0$ and \rl{Lem:: D_St_nu_comm}. 
Since $0 = \pa_i(\chi_0 + \sum_{\nu=1}^M \chi_\nu^2) = \pa_i \chi_0 + 2 \sum_{\nu=1}^M \chi_\nu \pa_i(\chi_{\nu})$, we can write 
\begin{align*}
  [\pa_i, S_t] u &= (S_t^0 - I)((\pa_i \chi_0) u) + \sum_{\nu=1}^M (\pa_i \chi_\nu) (S_t^\nu - I) (\chi_\nu u) + \sum_{\nu=1}^M \chi_\nu (S_t^\nu - I) ((\pa_i \chi_\nu) u). 
\end{align*}
Applying the proof of \rp{Prop::SmOp} to the right-hand side above we get \re{D_St_comm_est}.  
\end{proof}

\subsection{Stability of constants} 
For our application, we need to construct a sequence of domains $\{ D_{j+1} = F_{j} (D_{j}) \}_{j=0}^\infty$, where $\{F_j \}_{j=0}^\infty$ is a sequence of diffeomorphisms constructed using the above defined smoothing and homotopy operators.  
For the iteration to work, we need to make sure that each $D_j$ is strictly pseudoconvex with $C^2$ boundary, and also that the maps $F_j \circ F_{j-1} \circ \cdots \circ F_1$ converge to a limiting map $F_\infty$ in the desired norms. This requires the stability of constants in all the estimates under small $C^2$ perturbation of domains. We now make precise this notion of stability, following \cite{G-G24}.  

Let $D_0 = \{ x \in \Uc: \rho_0 <0 \} \subset \Uc \subset \R^d$ be a domain with $C^2$ boundary, where $\Uc$ is a neighborhood of $\ov D_0$ and $\rho_0$ is a $C^2$ defining function of $D_0$. Let
\[
  G_{\ve_0} = \{ \rho \in C^2(\Uc): \| \rho - \rho_0 \|_{\Uc,2} \leq \ve_0 \}. 
\]
Here $\ve_0$ is a small positive number such that for all $\rho \in G_{\ve_0}$, we have $d \rho(x) \neq 0$ on $\{ x\in \Uc: \rho (x) = 0 \}$.  

\begin{defn} \label{Def::upp_low_stable}
We say that a function $\Ac: G_{\ve_0} \to (0,\infty)$ is \emph{upper stable} (resp. \emph{lower stable}) under (small) $C^2$ perturbation of the domain, if there exists $\ve(D_0) >0$ and a constant $C_0 (D_0) > 1$, such that 
\[
  \Ac(\rho) \leq C_0 (D_0) \Ac(\rho_0) \quad (\text{resp.} \:  \Ac(\rho_0) \leq C_0 (D_0) \Ac(\rho) ). 
\]
for all $\rho$ satisfying $\| \rho - \rho_0 \|_{\Uc,2} \leq \ve(D_0)$. 
\end{defn}

We make note of the following examples of upper stable mappings which are relevant to our proof. 

\begin{enumerate}
  \item 
    The constants appearing in \rl{Lem::H-Z},   \rl{Lem::comp_long} are upper stable under small $C^2$ perturbation of the domain.  
    \item 
    The operator norms of Rychkov's extension operator (\rp{Prop::Ext_opt_Lip}) depend only on the Lipschitz norm of the domain, which is upper stable under small $C^1$ perturbation of the domain.  
    \item 
  The operator norms for the smoothing operator are upper stable under small $C^1$ perturbation of the domain $D_0$ (see \rp{Prop::SmOp}).  
  \item The operator norms for the homotopy operator \re{Hq} are upper stable under small $C^2$ perturbation of the domain. 
\end{enumerate}

\medskip 

We now show how \rt{Thm::intro_main_thm} implies  \rt{Thm::Loc_N-N}. The proof is almost identical to the one for \cite[Theorem 1.2]{G-G24}, and we include it here for the reader's convenience. The lower stability of $\del_0$ plays a key role in the proof. 
Let $M \subset bU$ be a $C^2$ strictly pseudoconvex real hypersurface, and suppose that $0 \in M$ and $A(z) = o(|z|)$. By a local polynomial change of coordinates (see \cite[Lemma 2.3.]{G-G24}) that preserves the condition $A(z) = o(|z|)$, there exists a defining function $\rho$ for $M$, defined near the origin, such that $\rho<0$ on $U$, $\rho = 0$ on $M$, and 
\begin{equation} \label{rho_poly_chofco}   
  \rho(z) = - y_n + |z'|^2 + h(z', x_n), \quad z_n = x_n + i y_n,    
\end{equation} 
where $h = o(2)$ is a $C^2$ function. 

We shall need the following result of Gan and Gong.  
\begin{prop}{\cite[Proposition 2.4]{G-G24}} \label{Prop::dilation}   
  Let $M \subset b U$ be a $C^2$ strictly pseudoconvex real hypersurface containing the origin, which has a local defining function of the form \re{rho_poly_chofco}. Let $\{ X_{\ov \all} =  
\pa_{\ov \all} + A^\beta_{\ov \all} \pa_\beta \}_{\all=1}^n$ define an integrable almost complex structure on the one-sided domain $U \cup M$ with $A(z) = o(|z|)$. Suppose that $A \in 
\La^r(U \cup M)$, $1<r<\infty$. Then after a non-isotropic dilation $\phi_\la(z', z_n) = (\la^{-1} z', \la^{-2} z_n)$, where $\ve >0$ is sufficiently small, the following hold: 
  \begin{enumerate}[(i)] 
      \item 
      There exist some open set $B \subset \C^n$ and a $C^2$ function $\rho_\la: B \to \R$ such that $D_\la = \{ z \in B: \rho_\la < 0 \} \subset \phi_\la(U \cup M)$ is a connected $C^2$ strictly pseudoconvex domain that shares part of the boundary with $\phi_\la (M)$ near the origin. Moreover, there exists a $C^2$ function $\rho_0: B \to \R$ such that $\lim_{\la \to 0} \| \rho_\la - \rho_0 \|_{B,2} = 0$ and $D_0:= \{ z \in B: \rho_0 < 0\}$ is also a connected $C^2$ strictly pseudoconvex domain.    
      \item 
      On each $\ov D_\la$, $d\phi_\la (X_{\ov \all})$ is spanned by $ \{ \pa_{\ov \all} + \left( A^{(\la)}\right)_{\ov \all}^\beta \pa_\beta \}_{\all=1}^n$, where $|A^{(\la)}|_{D_\la,r}$ tends to $0$ with $\la$.
    \end{enumerate}
\end{prop}
\medskip 
\nid 
\textit{Proof of \rt{Thm::Loc_N-N}.} Let $X_{\ov \all} \in \La^m(U \cup M)$, for $m > 3/2$. Apply \rp{Prop::dilation} to $\{ U \cup M, \{ X_{\ov \all} \}_{\all=1}^n \}$, with $\ve>0$ is  to be determined. Then we obtain a $C^2$ strictly pseudoconvex domain $D_\la \subset \phi_\la (U \cup M)$, which shares part of the boundary with $\phi_\la(M)$, and $0 \in b D_\la$. The vector fields $\{ X^{(\la)}_\all = \pa_{\ov \all} + \left( A^{(\la)}\right)_{\ov \all}^\beta \pa_\beta \}_{\all=1}^n$ define a formally integrable almost complex structure on $D_\la$ and $|A^{(\la)}|_{D_\la,m} $ tends to $0$ as $\la \to 0$. 

By \rt{Thm::intro_main_thm}, there exists $\del_0= \del_0(D_0, |A|_{\frac{3}{2} + \wti \ve_0}, \wti \ve_0 ) $ which is lower stable under a small $C^2$ perturbation of $D_0$ (Note that $\del_0$ blows up as $m \to \frac32^+$.)  Therefore, we can find $\ve>0$ sufficiently small such that 
\[
  |A^{(\la)} |_{D_\la, \frac32+ \wti \ve_0} \leq \del_0/ C_0(D_0) \leq \del_\la, 
  \quad 
   m > \frac{3}{2} + \wti \ve_0.   
\]
where $\del_\la$ denotes the constant in the hypothesis of \rt{Thm::intro_main_thm} for the domain $D_\la$. Consequently, by applying \rt{Thm::intro_main_thm}, we obtain a diffeomorphism $F_\la: D_\la \to \C^n$ that sends the almost complex structure to the standard one, such that $F_\la \in \La^{m+\yh^-}(D_\la)$ if $m<\infty$, and $F_\la \in \La^\infty(D_\la)$ if $m= \infty$. Since $D_\la$ shares part of the boundary with $\phi_\la(M)$, $F_\la$ induces a diffeomorphism near $0 \in M$ that sends the almost complex structure to the standard one on one side of the domain. We can then take the embedding to be $F = F_\la \circ \phi_\la$. 
\bigskip 
\section{Transformation of the structure under diffeomorphism}

Let $D_0$ be a strictly pseudoconvex domain in $\C^n$. Given the initial integrable almost complex structure on $D_0$, which is given by the vector fields 
$ \{ X_{\ov \all} \}_{\all=1}^n = \{ \pa_{\ov \all} + A_{\ov \all}^\beta \pa_{\beta} \}_{\all=1}^n$, we want to find a transformation $F$ defined on $\ov D_0$ that transforms the structure to a new structure closer to the standard complex structure while $\ov D_0$ is transformed to a new domain that is still strictly pseudoconvex. We shall assume the following initial condition for $A = [A_{\ov \all}^\beta]_{1\leq \all, \beta \leq n}$: 
\begin{equation} \label{A_ic}  
  t^{-\yh} |A|_{D_0,s} < 1, \quad s = 1 + \e_0, 
\end{equation}
where $t$ is the parameter of the smoothing operator which we will choose to be sufficiently small, and $\e_0$ can be taken to be any sufficiently small positive number to be specified later.   
We take the map in the form $F = I + f$. Applying the
extension operator $\Ec_{D_0}$ to $f$ (\rp{Prop::Ext_opt_Lip}), we can assume that $f$ is defined with compact support on some open set $\Uc_0$ containing $\ov D_0$. Let $B_R = \{ z \in \C^n: |z| < R \}$, where $R$ is very large such that 
\[
 D_0 \subset \subset \Uc_0 \subset \subset B_{R/2}. 
\] 

We will use $C_m$ (resp. $C_s,C_r$ etc.) to denote a constant depending on $m$, and which is upper stable under small $C^2$ perturbation of the domain $D_0$. We will use the same $C_m$ to denote different constants depending on $m$. 

As in the proof of \rl{Lem::int_cond}, we regard $A^\all_\beta$ as the coefficients of the $(0,1)$ form 
$A^\all:= A^\all_\beta d \ov z_\beta$. 
We then apply the homotopy formula component-wise to $A = 
(A^1, \dots, A^n)$ on $\ov D_0$ so that 
\[
 A = \dbar P A + Q \dbar A,  
\]
where $P=\Hc_1$ and $Q = \Hc_2$ are given by formula \re{Hq}. By \rp{Prop::dbar}, we have 
\begin{equation} \label{PQ_est} 
   |PA|_{D_0,r + \yh}, \; |Q A|_{D_0,r + \yh} < |A|_{D_0,r}, \quad r >0.     
\end{equation}

We set $f= - \Ec_{D_0} S_t PA$, where $S_t$ is the smoothing operator constructed in \rp{Prop::SmOp}. 
By \rp{Prop::SmOp} (i) and \re{PQ_est}, we have the following estimates for $f$: 
\begin{gather} \label{f_m_norm_est} 
  |f|_{B_R,m} \leq C_m |S_t PA|_{D_0,m} \leq C_m' |PA|_{D_0,m} \leq C_m''|A|_{D_0, m-\yh}, \quad m > \yh;
  \\ \label{f_m_norm_est_2}  
 |f|_{B_R,m} \leq C_m |S_t PA|_{D_0,m}
\leq C_m' t^{-\yh} |PA|_{D_0,m-\yh} \leq C_m'' |A|_{D_0, m-1}, \quad m > 1. 
\end{gather}
In view of \re{f_m_norm_est} and the initial condition \re{A_ic}, we have 
\[
   \| f \|_{B_R,1} 
  \leq |f|_{B_R,1+\ve} \leq C_1  
|A|_{D_0,\yh+\ve} \leq C_1 t^{\yh} < \yh, 
\]
where we choose $t < \frac{1}{(2C_1)^2}$.  
By \rl{Lem::Inv}, $F$ is a diffeomorphism from $B_R$ onto itself, where $R$ is a sufficiently large number and $\Uc \subset \subset B_{R/2}$.  Furthermore, 
\begin{gather*} 
\| g \|_{B_R, a} \leq C_a \| f \|_{B_R,a}, \quad a \geq 1; 
\\  
| g |_{B_R, a} \leq C_a | f |_{B_R,a}, \quad a > 2, 
\end{gather*} 
which together with \re{f_m_norm_est} implies 
\begin{equation} \label{g_m_norm_est} 
  |g|_{B_R,m} \leq C_m |f|_{B_R,m} \leq C'_m |A|_{D_0,m-\yh}, \quad m > 1. 
\end{equation}

By \rl{Lem::A_choc}, 
the new structure takes the form 
\begin{equation} \label{A'_circ_F}
  A' \circ F = (I + \dbar \ov f + A \dbar f)^{-1} (A + \dbar f + A \pa f).  
\end{equation} 
Substituting $f = - S_t PA$, we have 
\begin{align*} 
  A + \dbar f + A \pa f
  &= A - \dbar(S_t PA) + A \pa f \\
  &= A - S_t \dbar PA + [S_t, \dbar] PA   + A \pa f \\ 
  &= A - S_t(A - Q \dbar A) + [S_t, \dbar] PA  + A \pa f \\
  &= (I-S_t)A + S_t Q \dbar A + [S_t, \dbar] PA + A \pa f. 
\end{align*}
We shall use the following notation: 
\[ 
\Kc= \ov{\pa f} + A \dbar f, \quad I_1 = (I - S_t)A, 
\quad I_2 = S_t Q \dbar A, \quad I_3 = [S_t, \dbar] PA, \quad I_4 = A \pa f, 
\] 
and consequently we can rewrite \re{A'_circ_F} as 
\[ 
  \wti A = (I + \Kc)^{-1} (I_1 + I_2 + I_3 + I_4), \quad \wti A = A' \circ F. 
\]
We first estimate the $I_j$-s. By \rp{Prop::SmOp}, we get 
\begin{equation} \label{I1_est} 
 |I_1|_{D_0,m} = |(I - S_t)A|_{D_0,m}  
  \leq C_{m,r} t^{r-m} |A|_{D_0,r}, \quad 0 < m \leq r.  
\end{equation} 
Substituting $s$ for $m$ in \re{I1_m_norm_est} we have  
\begin{equation} \label{I1_s_norm_est} 
  |I_1|_{D_0,s} \leq C_{r,s} t^{r-s} | A|_{D_0,r}, \quad 0 < s \leq r. 
\end{equation}
Substituting $m$ for $r$ in \re{I1_m_norm_est} we have 
\begin{equation} \label{I1_m_norm_est}  
  |I_1|_{D_0,m} 
  \leq C_m |A|_{D_0, m}, \quad m > 0 
\end{equation} 
For $I_2$, 
we need to use the integrability condition $\dbar A = [\pa A, A]$. Together with \rp{Prop::SmOp}, \re{H-Z_product}, and \re{PQ_est} to get  
\begin{equation} \label{I2_est}  
  \begin{aligned} 
   |I_2|_{D_0,m} &= |S_t Q \dbar A|_{D_0,m}
  \leq C_m t^{-\yh} |Q\dbar A|_{D_0,m-\yh}
  \\ &\leq C_m t^{-\yh} |\dbar A|_{D_0, m-1}
  = C_m' t^{-\yh} \left|[\pa A, A]\right|_{D_0, m-1}
  \\ &\leq C_m'' t^{-\yh} \left(|A|_{D_0,m-1} |\pa A|_{D_0,\ve} + |\pa A|_{D_0, m-1} |A|_{D_0,\ve} \right) 
  \\ &\leq 2C_m'' t^{-\yh} |A |_{D_0,s} |A |_{D_0,m}, \quad s, m > 1,  
\end{aligned} 
\end{equation} 
where we choose any $\ve \in (0, \e_0)$.  
Substituting $s$ for $m$ in the above estimate we get 
\begin{equation} \label{I2_s_norm_est} 
  | I_2|_{D_0,s} \leq C_s t^{-\yh} | A|^2_{D_0,s}, \quad s=1+\e_0.  
\end{equation}   
Using the initial condition \re{A_ic} in \re{I2_est} we get 
\begin{equation} \label{I2_m_norm_est}
| I_2|_{D_0,m} \leq C_m |A|_{D_0,m}, \quad m > 1. 
\end{equation}   

For $I_3$, we apply \rp{Prop::D_St_comm_est} and \re{PQ_est} to get 
\begin{equation} \label{I3_est} 
  \begin{gathered} 
  |I_3|_{D_0,m} = |[S_t, \dbar] PA|_{D_0,m} 
  \leq C_{m,r} t^{r+\yh-m} |PA|_{D_0,r+\yh} \leq C'_{m,r} t^{r+\yh-m} |A|_{D_0, r}, 
  \\ r \geq \yh, \quad  0 < m \leq r+\yh . 
\end{gathered}    
\end{equation}
Substituting $s$ for $m$ in the above estimate we have
\begin{equation} \label{I3_s_norm_est} 
  |I_3|_{D_0,s} \leq C_{r,s} t^{r+\yh-s} |A|_{D_0,r},  \quad 
  0 < s \leq r+\yh, \quad r \geq \yh.  
\end{equation} 
Substituting $m$ for $r$ in \re{I3_est} we have 
\begin{equation} \label{I3_m_norm_est}
   |I_3|_{D_0,m} \leq C_m |A|_{D_0,m}, \quad m \geq \yh. 
\end{equation} 

To estimate $I_4$, we recall that $f = - S_t P A$. Hence for $s,m >0$, we have
\begin{equation} \label{I4_est}
\begin{aligned} 
 |I_4|_{D_0,m} = |A \pa f|_{D_0,m} 
 &\leq C_m \left( |A|_{D_0,m} |\pa f|_{D_0,\ve} + |A|_{D_0,\ve} | \pa f |_{D_0, m} \right) 
 \\ &\leq C_m \left( |A|_{D_0,m} |f|_{D_0,1+\ve} + |A|_{D_0,\ve} | f |_{D_0, m+1} \right)
 \\ &\leq C_m' \left( |A|_{D_0,m} t^{-\yh} |A|_{D_0,\ve} + |A|_{D_0, \ve} t^{-\yh} |A|_{D_0,m} \right)
 \\ &\leq 2 C_m' t^{-\yh} |A|_{D_0,s} |A|_{D_0, m},
\end{aligned} 
\end{equation} 
where we used that $|f|_{D_0, 1+\ve} = |S_t PA|_{D_0, 1+\ve} 
\leq C_1 t^{-\yh} |PA|_{D_0,\yh+\ve} \leq C_1' t^{-\yh} |A|_{D_0,\ve}$, and similarly $|f|_{D_0,m+1} \leq C_m t^{-\yh} |A|_{D_0,m}$ for any $m > 0$.  
Applying estimate \re{I4_est} with $s$ in place of $m$ we get 
\begin{equation} \label{I4_s_norm_est}  
  |I_4|_{D_0,s} \leq C_s t^{-\yh} |A|^2_{D_0,s}, \quad s = 1 + \e_0. 
\end{equation} 
Alternatively, by using the initial condition \re{A_ic} in \re{I4_est}, we have 
\begin{equation} \label{I4_m_norm_est} 
  |I_4|_{D_0,m} \leq C_m |A|_{D_0,m}, \quad m > 0. 
\end{equation} 
Next, we estimate the low and high-order norms of $ (I + \Kc)^{-1}$, where $\Kc:= \ov {\pa f} + A \dbar f$. By using $f = -S_t PA$, the product rule \re{H-Z_product}, and \rp{Prop::SmOp} (i) we have
\begin{equation} \label{Kc_est}   
\begin{aligned} 
  |\Kc|_{D_0,m} &\leq | \dbar \ov{S_t PA}|_{D_0,m} + |A \dbar S_t PA|_{D_0,m} 
  \\ &\leq |S_t PA|_{D_0,m+1} + |A|_{D_0,m} |\dbar S_t PA|_{D_0,\e_0} 
  + |A|_{D_0,\e_0} |\dbar S_t PA|_{D_0,m}  
  \\ &\leq C_m t^{-\yh} |A|_{D_0,m}, \quad m >0 
 \end{aligned} 
\end{equation}
 where in the last inequality we used 
\begin{gather*} 
  |\dbar S_t PA|_{D_0,\e_0} \leq |S_t P A|_{D_0,1+\e_0}  
\leq C_1 |A|_{D_0,1+\ve_0} \leq C_1 t^{\yh}.  
 \end{gather*} 
Applying estimate \re{Kc_est} with $m=s$ and using the initial condition \re{A_ic} we get
\begin{equation} \label{K_s_norm_est}
  |\Kc|_{D_0,s} \leq C_s, \quad s = 1 + \e_0.  
\end{equation} 
We now consider $(I+\Kc)^{-1}$. Using the formula $(I+ \Kc)^{-1} = [\det (I+\Kc)]^{-1} B$, where $B$ is the adjugate matrix of $I + \Kc$, we see that every entry in $(I + \Kc)^{-1}$ is a polynomial in $[\det (I+\Kc)]^{-1}$ and entries of $\Kc$. By using \re{H-Z_product} and \re{Kc_est}, we get  
\begin{equation} \label{I+K_inv_m_norm}
  |(I+\Kc)^{-1} |_{D_0,m} \leq C_m (1+ |\Kc|_{D_0,m})
\leq C_m' (1+ t^{-\yh} |A|_{D_0,m}), \quad m > 0. 
\end{equation} 
In particular by the initial condition \re{A_ic}, we have 
\begin{equation} \label{I+K_inv_s_norm} 
  |(I+ \Kc)^{-1} |_{D_0,s} \leq C_s, \quad s = 1 + \e_0.  
\end{equation}
We now estimate the $s$-norm of $\wti A = (I+\Kc)^{-1}(\sum_{j=1}^4 I_j)$. Applying the product estimate \re{H-Z_product} and \re{I1_s_norm_est}, \re{I1_m_norm_est}, \re{I+K_inv_m_norm}, \re{I+K_inv_s_norm}, we get
\begin{equation} \label{I+K_inv_I1_m_norm}  
\begin{aligned}
 |(I+\Kc)^{-1} I_1|_{D_0,m}  
 &\leq C_m \left( |(I+\Kc)^{-1}|_{D_0,m} |I_1|_{D_0,\ve}   
 + |(I+\Kc)^{-1}|_{D_0,\ve} |I_1|_{D_0,m} \right) 
 \\& \leq C_m' (1+t^{-\yh}|A|_{D_0,m}) (C_s t^{s-\ve} |A|_{D_0,s}) + C_m' |A|_{D_0,m}
 \\&\leq C_{m,s} t^{s-\ve} |A|_{D_0,s} + C_m' |A|_{D_0, s} |A|_{D_0,m} + C'_m |A|_{D_0,m} 
\\ &\leq C''_m |A|_{D_0,m}, \quad m >0 
\end{aligned}
\end{equation}   
where we used the initial condition \re{A_ic}. For the $s$ norm,  
we apply \re{I1_s_norm_est} and \re{I+K_inv_s_norm} to get 
\begin{equation} \label{I+K_inv_I1_s_norm} 
\begin{aligned} 
 |(I+\Kc)^{-1} I_1|_{D_0,s} 
&\leq C_s \left( |(I+\Kc)^{-1}|_{D_0, s} |I_1|_{D_0,\ve} + |(I+\Kc)^{-1}|_{D_0,\ve} |I_1|_{D_0,s} \right) 
\\ &\leq 2C_s |(I+\Kc)^{-1}|_{D_0,s} |I_1|_{D_0,s} 
\\ &\leq C_{r,s} t^{r-s} |A|_{D_0,r}, \quad 0 < s \leq r.  
\end{aligned} 
\end{equation} 
Using estimates \re{I2_s_norm_est}, \re{I2_m_norm_est}, \re{I+K_inv_m_norm} and \re{I+K_inv_s_norm} we get, 
\begin{equation} \label{I+K_inv_I2_m_norm}  
\begin{aligned}
 |(I+\Kc)^{-1} I_2|_{D_0,m}
 &\leq C_m (|(I+\Kc)^{-1}|_{D_0,m} |I_2|_{D_0,\ve} + |(I+\Kc)^{-1}|_{D_0,\ve} |I_2|_{D_0,m} ) 
 \\ &\leq C_m' (1+ t^{-\yh} |A|_{D_0,m} )  
 (t^{-\yh} |A|^2_{D_0,s}) + C_m' |A|_{D_0,m}
 \\ &\leq C_m' \left(  t^{-\yh} |A|^2_{D_0,s} 
+ t^{-1} |A|^2_{D_0,s} |A|_{D_0,m} 
 + |A|_{D_0,m} \right) 
 \\ &\leq 3C_m' |A|_{D_0,m}, \quad m >1, 
\end{aligned}
\end{equation} 
where we used the initial condition \re{A_ic}. 
For the $s$-norm, we apply \re{I2_s_norm_est} and \re{I+K_inv_s_norm} to get 
\begin{equation} \label{I+K_inv_I2_s_norm} 
   |(I+\Kc)^{-1} I_2|_{D_0,s} 
\leq C_s  |(I+\Kc)^{-1}|_{D_0,s} |I_2|_{D_0,s}
 \leq C_s' t^{-\yh} |A|_{D_0,s}^2, \quad s = 1 + \e_0.   
\end{equation} 

In a similar way, by using estimates \re{I3_s_norm_est}, \re{I3_m_norm_est}, \re{I4_s_norm_est} and \re{I4_m_norm_est}, we can show that 
\begin{equation} \label{I+K_inv_I3I4_m_norm}  
|(I+\Kc)^{-1} I_3|_{D_0,m}, \; |(I+\Kc)^{-1} I_4|_{D_0,m}
  \leq C_m |A|_{D_0,m}, \quad m > 1, 
\end{equation} 
and
\begin{equation} \label{I+K_inv_I3I4_s_norm} 
 | (I+ \Kc)^{-1} I_3|_{D_0,s} \leq C_{r,s} t^{r-s} |A|_r, \quad s \leq r; \qquad 
| (I+ \Kc)^{-1} I_4|_{D_0,s} \leq C_s t^{-\yh} |A|^2_s. 
\end{equation}
Combining estimates \re{I+K_inv_I1_m_norm}, \re{I+K_inv_I2_m_norm} and \re{I+K_inv_I3I4_m_norm}, 
we obtain for $\wti A = (I+\Kc)^{-1}(\sum_{j=1}^4 I_j)$ the following estimate for the $m$-norm:  
\begin{equation} \label{ti_A_m_est}   
 | \wti A|_{D_0,m} \leq C_m |A|_{D_0,m}, \quad m > 1. 
\end{equation} 
By using \re{I+K_inv_I1_s_norm}, \re{I+K_inv_I2_s_norm} and \re{I+K_inv_I3I4_s_norm}, we obtain the following estimate for the $s$-norm:  
\begin{align} \label{ti_A_s_est}   
  |\wti A|_{D_0,s} \leq C_{r,s} t^{r-s} |A|_{D_0,r} + C_s
  t^{-\yh} |A|^2_{D_0,s}, \quad s = 1 + \e_0, \quad s \leq r. 
\end{align} 
Finally, we estimate the norms of $A' = \wti A \circ G$, where $G=I+g =F^{-1}$. 
By \re{g_m_norm_est} and the initial condition \re{A_ic}, we have
\[
  |G|_{D_1,1+\ve} \leq C_1 |A|_{D_0, \yh} 
  \leq C_1 t^{\yh} < 1,   
\]
if we take $t< 1/C_1^2$.

Let $D_1 = F(D_0)$. Since $|f|_{D_0,1+\e_0}$ is small, we can assume that $D_1 \subset \subset \Uc_1 \subset \subset B_R$.
Applying the chain rule \re{chain_rule_a>1} and estimate \re{g_m_norm_est} for $G$, we obtain 
\begin{align*}
|A'|_{D_1,m} &= |\wti A \circ G|_{D_1,m}
\leq C_m (|\wti A|_{D_0,m} (1+|G|_{D_1, 1+ \frac{\e_0}{2}}^{\frac{1+\ve_0}{1+\ve_0/2 }} ) + | \wti A \|_{D_0,1+ \frac{\e_0}{2}} |G|_{D_1,m} + \| \wti A \|_{D_0,0} ) 
\\ &\leq C_m' ( |\wti  A|_{D_0,m} + |A|_{D_0, m-\yh} ), \quad m>1. 
\end{align*}
Here in the above line we applied \re{ti_A_s_est} with $r= 1+\e_0$ and $s = 1+ \e_0 /2$:  
\[
  |\wti A|_{D_0, 1+ \frac{\e_0}{2}} \leq C_s t^{\frac{\e_0}{2}} |A|_{D_0,1+\e_0} + C_s
  t^{-\yh} |A|^2_{D_0,1+ \frac{\e_0}{2}} \leq 2C_s.  
\]
Using estimates \re{ti_A_m_est} and \re{ti_A_s_est} for $\wti A$, we obtain 
\begin{equation} 
 \begin{gathered} 
|A'|_{D_1,s} \leq C_{r,s} t^{r-s} |A|_{D_0,r} + C_s t^{-\yh} |A|^2_{D_0,s}, \quad s = 1 + \e_0, \quad s \leq r; 
\\  
|A'|_{D_1,m} \leq C_m |A|_{D_0,m}, \quad m > 1. 
\end{gathered}  
\end{equation}

Notice that all the constants appearing in the above estimates are upper stable, in view of the remark after Definition \ref{Def::upp_low_stable}.   
We now summarize the estimates from this section in the following proposition. 
\begin{prop} \label{Prop::diffeo_est}  
  Let $D_0$ be a strictly pseudoconvex domain with $C^2$ boundary. Let $J$ be an almost complex structure defined on $\ov D_0$, given by the set of vector fields $\{ X_{\ov \all} \}_{\all=1}^n= \{ \pa_{\ov \all} + A^\beta_{\ov \all} \pa_\beta \}_{\all=1}^n$ (i.e. $S_J^+ = \Span 
 \{ X_{\ov \all} \}$ ). Let $F= I - \Ec_{D_0} S_t PA$ be given as above and set $F(D_0) = D_1$. Denote by $J'$ the push-forward of $J$ under $F$, such that $J'$ is given by the vector fields $ \{ X'_{\ov \all} \}_{\all=1}^n= \{ \pa_{\ov \all} + (A')^\beta_{\ov \all} \pa_\beta \}_{\all=1}^n$ on $D_1$. Let $s=1+\e_0$ and assume that
\[
  t^{-\yh} |A|_{D_0,s} < 1, \quad t < \frac{1}{(2C_1)^2}.  
\]
Then the following hold: 
\begin{enumerate}[(i)] 
\item $F$ is a diffeomorphism of $B(0,R)$ onto itself. The inverse is given by $G = I + g$, where $g$ satisfies the estimate:  
\[
  |g|_{B_R,m} \leq C_m |f|_{B_R,m} \leq C_m |A|_{D_0,m-\yh}, \quad m > 1. 
\]

\item \begin{equation} \label{A'_est}
 \begin{gathered} 
|A'|_{D_1,s} \leq C_{r,s} t^{r-s} |A|_{D_0,r} + C_s t^{-\yh} |A|^2_{D_0,s}, \quad s \leq r;  
\\  
|A'|_{D_1,m} \leq C_m |A|_{D_0,m}, \quad m > 1. 
\end{gathered}  
\end{equation} 
\end{enumerate} 
The constants $C_1, C_{r,s}, C_s, C_m$ are upper stable under small $C^2$ perturbation of the domain. 
\end{prop}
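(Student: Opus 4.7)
The plan is to derive the transformation formula for $A'$ under $F = I - S_t P A$, decompose the right-hand side into four terms via the $\bar\partial$ homotopy formula, estimate each term using the smoothing and commutator estimates of Section 2, and then combine everything through the product and chain rules.

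First, I would invoke \rl{Lem::A_choc}(ii) to write
\begin{equation*}
A' \circ F = (I + \overline{\partial f} + A\partial f)^{-1}(A + \bar\partial f + A \partial f),
\end{equation*}
set $\mathcal{K} := \overline{\partial f} + A\partial f$, and substitute $f = -S_t PA$. Using the homotopy formula $A = \bar\partial PA + Q \bar\partial A$ from \rt{Thm::dbar}, I would rewrite
\begin{equation*}
\bar\partial f = -\bar\partial S_t PA = -S_t \bar\partial PA + [S_t,\bar\partial]PA = -S_t A + S_t Q \bar\partial A + [S_t,\bar\partial]PA,
\end{equation*}
which yields the four-term decomposition
\begin{equation*}
A + \bar\partial f + A \partial f = \underbrace{(I-S_t)A}_{I_1} + \underbrace{S_t Q \bar\partial A}_{I_2} + \underbrace{[S_t,\bar\partial]PA}_{I_3} + \underbrace{A \partial f}_{I_4}.
\end{equation*}
This splitting is tailored to Webster's KAM-type scheme: $I_1$ measures what the smoother discards, $I_2$ exploits integrability to gain a factor of $A$, $I_3$ is a commutator defect, and $I_4$ is the quadratic conjugation error.

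Next, I would estimate each $I_j$ at both the low level $s = 1 + \e_0$ and the high level $m > 1$. For $I_1$, \rp{Prop::SmOp}(ii) gives $|I_1|_{D_0,s} \lesssim t^{r-s}|A|_{D_0,r}$ with no $t^{-1/2}$ loss. For $I_2$, the integrability identity $\bar\partial A = (\partial A)A - A(\partial A)^T$ of \rl{Lem::int_cond} makes $\bar\partial A$ quadratic in $A$; combining this with the product rule, the half-derivative gain of $Q$ in \rt{Thm::dbar}, and the $t^{-1/2}$ cost of $S_t$ yields $|I_2|_{D_0,s} \lesssim t^{-1/2}|A|^2_{D_0,s}$. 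For $I_3$, the commutator estimate of \rp{Prop::D_St_comm_est} combined with the half-gain of $P$ gives $|I_3|_{D_0,s} \lesssim t^{r+1/2-s}|A|_{D_0,r}$, even better than $I_1$. For $I_4$, the product rule together with $|f|_{D_0,1+\ve} \lesssim t^{-1/2}|A|_{D_0,\ve}$ gives $|I_4|_{D_0,s} \lesssim t^{-1/2}|A|^2_{D_0,s}$. For the high norm, the smallness hypothesis $t^{-1/2}|A|_{D_0,s} < 1/C^\ast$ is used repeatedly to absorb every $t^{-1/2}$ factor, producing $|I_j|_{D_0,m} \leq C_m|A|_{D_0,m}$ for $j = 1,\dots,4$. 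Then I would bound $(I+\mathcal{K})^{-1}$ via the adjugate formula $(I+\mathcal{K})^{-1} = [\det(I+\mathcal{K})]^{-1} B$, obtaining $|(I+\mathcal{K})^{-1}|_{D_0,s} \leq C_s$ and $|(I+\mathcal{K})^{-1}|_{D_0,m} \leq C_m(1 + t^{-1/2}|A|_{D_0,m})$. Multiplying by the $I_j$ estimates via the product rule in \rl{Lem::H-Z} yields the desired bounds on $\tilde A := A' \circ F$. Finally, I would pass from $\tilde A$ to $A' = \tilde A \circ G$ with $G = F^{-1} = I + g$: \rl{Lem::Inv} controls $g$ in terms of $f$ and hence of $A$, and the chain rule in \rl{Lem::H-Z} transfers the estimate from $D_0$ to $D_1$, with the extra term $|A|_{D_0,m-1/2}$ arising through $|G|_{D_1,m}$ absorbed by $|A|_{D_0,m}$.

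The main obstacle is the careful bookkeeping of $t^{-1/2}$ factors. The second estimate $|A'|_{D_1,m} \leq C_m|A|_{D_0,m}$ allows \emph{no} $t^{-1/2}$ loss at all, whereas $I_2$, $I_3$ (when $r = m - \tfrac12$) and $I_4$ each naturally carry such a factor; the smallness $t^{-1/2}|A|_{D_0,s} < 1/C^\ast$ must therefore be inserted at precisely the right places. This is why formal integrability is indispensable for $I_2$: it converts $\bar\partial A$ into a quadratic expression in $A$, supplying the extra factor of $|A|_{D_0,s}$ needed to neutralize the $t^{-1/2}$ coming from $S_t$. Without integrability, the high-norm estimate would fail and the iteration of Section 4 could not close.
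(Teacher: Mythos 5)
Your proposal is correct and matches the paper's argument essentially step for step: the same invocation of Lemma~\ref{Lem::A_choc}, the same insertion of the homotopy formula into $\bar\partial f$ to produce the four-term split $I_1,\dots,I_4$, the same use of the integrability identity for $I_2$ and the commutator estimate of Proposition~\ref{Prop::D_St_comm_est} for $I_3$, the same adjugate argument for $(I+\mathcal K)^{-1}$, and the same passage from $\tilde A$ to $A'=\tilde A\circ G$ via Lemma~\ref{Lem::Inv} and the chain rule.
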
  
\section{Iteration scheme and convergence of maps}  
In this section we set up the iteration scheme. We apply an infinite sequence of coordinate transformation $F_j$ as constructed in the previous section. The goal is to show that the composition of maps $\wti F_j := F_j \circ F_{j-1} \circ \cdots F_0$ converge to a limiting diffeomorphism $F$, while the perturbation $A_j$ converges to $0$. 

For this scheme to work we need to ensure that for each $i=0,1,2,\dots$, the map $F_i$ takes $D_i$ to a new domain $D_{i+1} = F(D_i)$ which is still strictly pseudoconvex with $C^2$ boundary. Hence we need to control the $C^2$-norm of the map $F_j$. 

In what follows we follow the same set-up as the last section and assume that 
\[
  D_0 = \{ z \in \Uc: \rho_0(z) < 0 \} \subset \subset \Uc \subset \subset B_0. 
\]
where $B_0$ is some large ball and $ \dist(D_j, \Uc)$ is bounded below by some positive constant. 
By applying extension, we assume that for each $j$, the map $F_j$ is a diffeomorphism from $B_0$ onto itself, $F_j$ is an identity map outside $\Uc$, and the defining function $\rho_j$ of the domain $D_j$ is defined in $\C^n$. 

We first recall two useful results from \cite{G-G24}. 
\begin{lemma}{\cite[Lemma 7.1]{G-G24}} \label{Lem::map_iter}   
Fix a positive integer $m$.  Let $D_0\subset \mathcal U\subset B_0 \subset \R^d$ with $\ov{D_0}\subset \mathcal U$.
Suppose that $D_0$ admits a $C^m$ defining function $\rho_0$ satisfying
$$
D_0=\{x\in \mathcal U\colon \rho_0(x)<0\}
$$
where $\rho_0>0$ on $\ov \Uc \setminus D_0$ and $\nabla \rho_0\neq0$ on $\pa D_0$.
Let
$F_j = I + f_j$ be a $C^m$ diffeomorphism which maps ${B_0}$ onto $B_0$ and maps $D_j$ onto $D_{j+1}$.
Let 
$\rho_1=(\widetilde E\rho_0)\circ F_0^{-1}$ and $\rho_{j+1}=\rho_j\circ F_j^{-1}$ for $j>0$, which are defined on $B_0$.
For any $\ve >0$, there exists
$$\si = \si(\rho_0, \ve, m)>0$$
such that if
\eq{}
\| f_j \|_{{B_0},m}\leq \frac{\si}{(j+1)^2},\quad 0\leq j< L,
\eeq
then the following hold
\bppp
\item
 $\tilde F_j=F_j\circ\cdots\circ F_0$ and $\rho_{j+1}$ satisfy
\ga\label{C2conv-g}
\| \tilde F_{j+1}-\tilde F_j \|_{{B_0},m}\leq C_m\frac{\si}{(j+1)^2},\quad 0\leq j< L\\
\| \tilde F_{j+1}^{-1}-\tilde F_j^{-1} \|_{{B_0},m}\leq C'_m\frac{\si}{(j+1)^2},\quad 0\leq j< L, \\
\| \rho_{j+1} - \rho_0 \|_{{\U},m} \leq \ve, \quad 0\leq j< L.
\label{Ucont}
\end{gather}
\item
 All $D_j$ are contained in $\mathcal U$ and
\eq{distDj}
\dist (\pa D_j,\pa D)\leq C \ve, \quad \dist(D_j,\pa {\mathcal U})\geq\dist(D_0,\pa {\mathcal U})-C \ve >0.
\eeq
\eppp
In particular, when $L=\infty$, $\widetilde F_j$ converges in $C^m$ to a $C^m$ diffeomorphism from ${B_0}$ onto itself, while $\rho_j$ converges in $C^m$  of ${B_0}$ as $\widetilde F^{-1}_j$ converges in $C^m$ norm on the set.
\end{lemma}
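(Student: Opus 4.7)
The plan is to establish all three estimates simultaneously by an inductive bootstrap on $j$, carefully controlling the constants uniformly in $j$. The key algebraic identities are the telescoping
\[
\wti F_{j+1} - \wti F_j = f_{j+1} \circ \wti F_j, \qquad \rho_{j+1} = (\widetilde E \rho_0) \circ \wti F_j^{-1},
\]
the first of which reduces $\|\wti F_{j+1}-\wti F_j\|_{B_0,m}$ to a \emph{single-map} composition estimate via \re{chain_rule_a>1} in \rl{Lem::H-Z}, rather than the compound version \re{comp_est_H-Z_norm} whose constant $C_r^m$ would blow up exponentially in $j$. The inductive hypothesis I would carry along is $\|\wti F_j - I\|_{B_0,m} \leq M_0$ together with $\|\wti F_j^{-1} - I\|_{B_0,m} \leq M_0$, for a constant $M_0 = M_0(\rho_0, m)$ chosen so that the induced $C^2$ perturbation of $\rho_j$ stays inside the upper-stable regime of Section~2.3, ensuring that the constants $C_m$ appearing below are genuinely independent of~$j$.

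First I would derive \re{C2conv-g}. Applying \re{chain_rule_a>1} to $f_{j+1} \circ \wti F_j$ under the inductive hypothesis and absorbing $M_0$-dependent factors into $C_m$ gives
\[
\|\wti F_{j+1} - \wti F_j\|_{B_0,m} = \|f_{j+1} \circ \wti F_j\|_{B_0,m} \leq C_m \|f_{j+1}\|_{B_0,m} \leq \frac{C_m \,\delta}{(j+2)^2}.
\]
Telescoping and using $\sum_{k\ge 0} (k+1)^{-2} < \infty$, the cumulative error is bounded by $C_m \delta$, which closes the induction on $\|\wti F_{j+1} - I\|_{B_0,m}$ as soon as $\delta$ is small enough. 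For the inverse, \rl{Lem::Inv} applied to each $F_j$ produces $G_j = I + g_j$ with $\|g_j\|_{B_0,m} \leq C_m \|f_j\|_{B_0,m}$; writing $\wti F_j^{-1} = G_0 \circ G_1 \circ \cdots \circ G_j$ and running the identical telescoping argument (composing on the right this time) yields the second line of \re{C2conv-g}.

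For \re{Ucont}, I would expand $\rho_{j+1} - \rho_0 = (\widetilde E \rho_0)\circ \wti F_j^{-1} - \widetilde E \rho_0$ and apply \re{chain_rule_a>1} once more, together with the already-proven bound $\|\wti F_j^{-1}-I\|_{B_0,m} \leq C_m\delta$, to obtain $\|\rho_{j+1} - \rho_0\|_{\mathcal U, m} \leq C(\rho_0,m)\,\delta \leq \epsilon$ after shrinking $\delta$. The inclusion $D_j \subset \mathcal U$ and the distance bound in \re{distDj} then follow from $\|\rho_{j+1} - \rho_0\|_{C^1(\mathcal U)} \leq \epsilon$ combined with $\nabla \rho_0 \neq 0$ on $\partial D_0$, via a standard implicit-function-style continuity argument. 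Finally, the summable bound \re{C2conv-g} makes $\{\wti F_j\}$ Cauchy in $C^m(B_0)$, yielding the limiting $C^m$ diffeomorphism when $L = \infty$; the same applies to $\{\wti F_j^{-1}\}$ and hence to $\{\rho_j\}$. The main obstacle is precisely the threat of an exponential $C_m^j$ blow-up in the chain rule iterated across $j$ maps: the quadratic-in-$j$ smallness hypothesis $\|f_j\|_m \leq \delta/(j+1)^2$ and the upper stability of $C_m$ under small $C^2$ perturbations of $D_0$ are exactly what allow the induction to run with a single uniform $C_m$, and choosing $\delta$ small enough at the outset is what lets one close the loop.
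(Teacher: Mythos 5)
The paper does not prove this lemma; it cites it verbatim from \cite{G-G21}, so there is no ``paper's own proof'' to compare against. Evaluating your sketch on its merits: the forward-direction argument is sound. The telescoping identity $\wti F_{j+1}-\wti F_j=f_{j+1}\circ\wti F_j$, the inductive control $\|\wti F_j-I\|_{B_0,m}\le M_0$, the single-map chain rule estimate giving $\|f_{j+1}\circ\wti F_j\|_m\le C_m\|f_{j+1}\|_m$, and the summability of $(j+1)^{-2}$ together give the first line of \eqref{C2conv-g} with a uniform $C_m$; your observation that this avoids the $C_m^j$ blow-up of \rl{Lem::comp_long} is exactly the right point.

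The claim that the inverse ``follows by the identical telescoping argument, composing on the right,'' however, contains a genuine gap. For $\wti F_{j+1}=F_{j+1}\circ\wti F_j$ the new map enters on the \emph{outside}, which is what makes the difference a composition of a small perturbation with a bounded map: $(F_{j+1}-I)\circ\wti F_j$. For the inverse the new map enters on the \emph{inside}:
\[
\wti F_{j+1}^{-1}-\wti F_j^{-1}=\wti F_j^{-1}\circ G_{j+1}-\wti F_j^{-1}
= g_{j+1}+\bigl(h_j\circ G_{j+1}-h_j\bigr),\qquad h_j:=\wti F_j^{-1}-I.
\]
The troublesome term is $h_j\circ G_{j+1}-h_j$, the \emph{same} function $h_j$ evaluated at two nearby points $x$ and $x+g_{j+1}(x)$. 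Bounding this difference in $C^m$ linearly in $\|g_{j+1}\|_m$ requires one more derivative of $h_j$: the top-order term is $(D^m h_j)\circ G_{j+1}-D^m h_j$, which is controlled by the modulus of continuity of $D^m h_j$, not by $\|h_j\|_m\|g_{j+1}\|_m$. With only $h_j\in C^m$, the inductive hypothesis $\|h_j\|_m\le M_0$ gives no uniform modulus of continuity for $D^m h_j$, so the asserted linear bound $\le C_m'\delta/(j+1)^2$ does not follow from the argument as written. Some additional structure (for instance $C^{m+1}$ regularity of the $f_j$, which the paper does have in its application since $f_j=-\Ec_{D_j}S_{t_j}P_jA_j$ is smooth, or a sharper iteration bookkeeping as in \cite{G-G21}) is needed to close this step.

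Your treatment of $\rho_{j+1}-\rho_0$ survives because the target estimate there is only $\le\epsilon$, not a linear rate in $\delta$: one can afford a modulus-of-continuity bound $\omega_{D^m\widetilde E\rho_0}(\|\wti F_j^{-1}-I\|_0)+C\delta\le\epsilon$ after shrinking $\delta$. But you should not claim the stronger bound $\le C(\rho_0,m)\delta$ there, for the same reason as the inverse case. The distance estimates in (ii) and the final convergence of $\wti F_j$ in $C^m$ then follow as you describe, modulo repairing the inverse estimate.
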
 

\begin{lemma}{\cite[Lemma 7.2]{G-G24}} \label{Lem::Levi} 
   Let $D$ be a  relatively compact $C^2$ domain in $\mathcal U$ defined by a $C^2$ function $\rho$.
There are 
$\ve=\ve(\rho)>0$ and a neighborhood $\mathcal{N}=\mathcal{N}(\rho)$ of $\pa D$ such that if
 $ \| \ti \rho-\rho \|_{\Uc,2}<\ve$, then we have
 $$
\inf_{\tilde z, \tilde t, |\tilde t|=1}\{ L\ti \rho(\tilde z,\tilde t)\colon
 \ti t\in T_{F(z)}^{1,0}\tilde\rho, \tilde z\in \mathcal{N}\}
\geq \inf_{z, t,|t|=1} \{L\rho(z,t)\colon  t\in T_z^{1,0}\rho,  z\in \mathcal{N}\}-C
\ve.
 $$
  Furthermore, $\tilde D=\{z\in \mathcal U\colon\tilde\rho<0\}$ is a $C^2$ domain with
  $\pa \widetilde D\subset\mathcal N(\rho)$.  
\end{lemma}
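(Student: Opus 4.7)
The statement has two parts: that $\tilde D$ is a $C^2$ domain whose boundary lies in the prescribed neighborhood $\mathcal N$, and a quantitative perturbation estimate for the infimum of the Levi form. Both are routine $C^2$ continuity arguments. The approach is to first select $\mathcal N$ from the nondegeneracy of $d\rho$, then show $\partial\tilde D\subset\mathcal N$ from $C^0$-closeness, and finally compare $L\tilde\rho$ with $L\rho$ at a common base point by projecting a unit vector in $T^{1,0}\tilde\rho$ onto $T^{1,0}\rho$.

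\textbf{Setup and the $C^2$ statement.} Because $\rho\in C^2$ with $\nabla\rho\ne 0$ on the compact set $\partial D$, there exist $c_0>0$ and a neighborhood $\mathcal N\subset\subset\mathcal U$ of $\partial D$ on which $|\nabla\rho|\geq c_0$. For $\varepsilon<c_0/2$ and any $\tilde\rho$ with $\|\tilde\rho-\rho\|_{\mathcal U,2}<\varepsilon$, the same inequality holds for $\tilde\rho$ with $c_0/2$ in place of $c_0$ throughout $\mathcal N$. Since $\rho$ is bounded away from zero on $\overline{\mathcal U\setminus\mathcal N}$, further shrinking $\varepsilon$ forces the zero set $\{\tilde\rho=0\}$ into $\mathcal N$, and by the implicit function theorem this zero set is a $C^2$ hypersurface; hence $\tilde D=\{\tilde\rho<0\}$ is a $C^2$ domain with $\partial\tilde D\subset\mathcal N$.

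\textbf{Comparing Levi forms.} Fix $\tilde z\in\mathcal N$ and a unit $\tilde t\in T^{1,0}_{\tilde z}\tilde\rho$, i.e.\ $\sum_j\partial_j\tilde\rho(\tilde z)\,\tilde t_j=0$. Project onto $T^{1,0}_{\tilde z}\rho$ via
\begin{equation*}
  t'=\tilde t-\frac{\sum_j\partial_j\rho(\tilde z)\,\tilde t_j}{|\partial\rho(\tilde z)|^2}\,\overline{\partial\rho(\tilde z)},\qquad t:=t'/|t'|.
\end{equation*}
The numerator equals $\sum_j(\partial_j\rho-\partial_j\tilde\rho)(\tilde z)\,\tilde t_j$ and is therefore $O(\varepsilon)$; combined with $|\partial\rho(\tilde z)|\geq c_0$ this yields $|t-\tilde t|\leq C\varepsilon$ with $C=C(\rho)$. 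Since the matrix of coefficients of $L\rho$ differs from that of $L\tilde\rho$ entrywise by at most $C\varepsilon$ and the Levi form is bilinear in $(t,\bar t)$, one obtains
\begin{equation*}
  L\tilde\rho(\tilde z,\tilde t)=L\rho(\tilde z,\tilde t)+O(\varepsilon)=L\rho(\tilde z,t)+O(\varepsilon)\geq\inf_{z\in\mathcal N,\,t\in T^{1,0}_z\rho,\,|t|=1}L\rho(z,t)-C\varepsilon.
\end{equation*}
Taking the infimum over $(\tilde z,\tilde t)$ gives the stated inequality.

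\textbf{Main obstacle.} No step is genuinely difficult; the only point requiring care is the \emph{uniformity} of the constant $C$, which must depend on $\rho$ alone (through $c_0$ and $\|\rho\|_{\mathcal U,2}$) and not on $\tilde\rho$. This forces the choices of $\mathcal N$, $\varepsilon$, and the projection formula all to be fixed from $\rho$ at the outset. In the iterative use of this lemma, one must also verify that $\|\rho_j-\rho_0\|_{\mathcal U,2}$ stays below the chosen $\varepsilon$ along the sequence of domains, which is exactly the content of \rl{Lem::map_iter} via \eqref{Ucont}.
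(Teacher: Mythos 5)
The paper does not prove this lemma itself; it imports it verbatim from \cite[Lemma 7.2]{G-G21}. Your argument is the standard $C^2$-continuity proof one would expect for that cited result, and it is correct: fixing $\mathcal N$ and $c_0$ from $\rho$, the IFT gives the $C^2$ graph claim, and the Gram--Schmidt-style projection of $\tilde t$ onto $T^{1,0}_{\tilde z}\rho$ (which you verify does land in the kernel of $\partial\rho(\tilde z)$) combined with the entrywise $O(\varepsilon)$ closeness of the complex Hessians and the $O(\varepsilon)$ bound on $|t-\tilde t|$ yields the Levi form inequality; your remark about uniformity of $C$ in $\rho$ alone is the essential point for the iterative use.
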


Notice that for a bounded strictly pseudoconvex domain $D_0$ with $C^2$ defining function $\rho_0$, there is an $\ve(D_0)>0$ such that if $\| \rho - \rho_0 \|_2 < \ve(D_0)$, then all the constants in \rp{Prop::diffeo_est} can be chosen independent of $\rho$. Furthermore, by \rl{Lem::Levi}, the domain defined by $ \rho < 0$ is strictly pseudoconvex if $\ve(D_0)$ is sufficiently small.

Finally, we let 
\begin{equation} \label{del_rho_0}
  \si(\rho_0) = \si(\rho_0, \ve(D_0), 2))   
\end{equation}
be the constant from \rl{Lem::map_iter}. In particular if $\rho = \rho_0 \circ F^{-1}$, where $F = I + f$ and $\| f \|_{B_0,2} \leq \si$, then $\| \rho - \rho_0 \|_{\Uc,2} \leq \ve(D_0)$. We note that both $\ve(D_0)$ and $\si(\rho_0)$ are lower stable under small $C^2$ perturbation of the domain.  

\begin{prop} \label{Prop::iter}   
Let $r> 3/2$ and $s = 1+ \e_0 $ for some sufficiently small $\e_0>0$ (so that condition below are satisfied). Let $C_s, C_r$, $C_{r,s}, \ve(D), \si(\rho_0)$ be the constants stated above, and let $\all,\beta,d,\la, \gm$ be positive numbers satisfying  
\begin{gather} \label{iter_parameter}
  r-s - \la -\gm > \all d + \beta, \quad 
\all (2-d) > \yh + \la, \quad 
\beta (d-1) > \la, 
\quad 1 < d < 2. 
\end{gather} 
Note that the second and fourth conditions imply that $\all > 1/2$.  
 Let $D$ be a strictly pseudoconvex domain with a $C^2$ defining function $\rho_0$ on $\Uc$ and $ \{ X_{\all} \}_{\all=1}^n = \{ \pa_{\ov \all} + A_{\ov \all}^\beta \pa_{\beta} \}_{\all=1}^n \in \La^r(\ov D)$ be a formally integrable almost complex structure. There exists a constant 
\[
  \hat t_0 = \hat t_0 (s, r, \all, \beta, d, \la, C_{r,s}, C_s, C_r, \si(\rho_0), \ve(D), |A|_{D,r})   
\] 
such that if 
\[
  |A|_{D,s} \leq t_0^{\all}, \quad 0 < t_0 \leq \hat t_0, 
\]
then the following statements are true for $i=0,1,2,\dots$
\begin{enumerate}[(i)] 
\item 
There exists a diffeomorphism $F_i = I + f_i$ from $B_0$ onto itself with $F_i^{-1} = I + g_i$ such that 
 $f_i, g_i$ satisfy 
\[
|g_i|_{B_0, m} \leq |f_i|_{B_0,m}, \quad m >1.     
\]
\item Set $\rho_{i+1} = \rho_i \circ F_i^{-1}$, and denote $D= D_0$. Then 
$D_{i+1}:= F_i(D_i) = \{ z \in \Uc: \rho_{i+1} <0\} $ and
\begin{gather*} 
  \| \rho_{i+1} - \rho_0 \|_{\Uc,2} \leq \ve (D_0), 
 \\ 
 \dist(D_{i+1}, \pa \U) \geq \dist (D_0, \pa \U) - C\ve > 0. 
 \end{gather*} 
\item 
For $i \in \N$, we have $\Span 
\{\pa_{\ov \all} + A_{i+1} \pa_{\all} \} = dF_i|_{\ov D_i} \left[ \Span 
\{ \pa_{\ov \all} + A_i \pa_{\all} \} \right] $. Moreover 
\[
  |A_i|_{D_i,s} \leq t_i^\all, \quad 
  |A_i|_{D_i,r} \leq |A|_{D,r}  t_i^{-\beta}.  
\] 
\end{enumerate}
The constant $\hat t_0$ needs to converge to $0$ as $r \to \frac{3}{2}^+$, and $\hat t_0$ is lower stable under small $C^2$ perturbation of the domain. 
\end{prop}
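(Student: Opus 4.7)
The proof proceeds by induction on $i$ with smoothing parameters $t_i := t_0^{d^i}$. The base case $i=0$ reduces to the hypotheses: $|A_0|_{D_0,s} = |A|_{D,s} \leq t_0^{\alpha}$ is assumed, and $|A_0|_{D_0,r} = |A|_{D,r} \leq |A|_{D,r} t_0^{-\beta}$ since $t_0 \leq 1$. For the inductive step, assuming (i)--(iii) hold through index $i$, I construct $F_i = I + f_i$ with $f_i = -S_{t_i} P_i A_i$ on $D_i$, following Section 3. The smallness hypothesis $t_i^{-1/2}|A_i|_{D_i,s} < 1/C^*$ required by \rp{Prop::diffeo_est} follows from $|A_i|_{D_i,s} \leq t_i^{\alpha}$ together with $\alpha > 1/2$ (a consequence of $\alpha(2-d) > \tfrac12 + \lambda$ and $1 < d < 2$), once $\hat t_0$ is small.

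The main inductive estimate is obtained by substituting the inductive bounds into \re{A'_est}:
$$|A_{i+1}|_{D_{i+1},s} \leq C_{r,s}|A|_{D,r}\, t_i^{\,r-s-\beta} + C_s\, t_i^{\,2\alpha-1/2}, \qquad |A_{i+1}|_{D_{i+1},r} \leq C_r |A|_{D,r}\, t_i^{-\beta}.$$
With $t_{i+1} = t_i^d$, the target bounds are $|A_{i+1}|_s \leq t_i^{\alpha d}$ and $|A_{i+1}|_r \leq |A|_{D,r}\, t_i^{-\beta d}$. The four inequalities \re{iter_parameter} are designed exactly for this: the margin $r - s - \beta - \alpha d \geq \lambda + \gamma$ absorbs $C_{r,s}|A|_{D,r}$ in the first $s$-norm term, $2\alpha - \tfrac12 - \alpha d \geq \lambda$ absorbs $C_s$ in the second, and $\beta(d-1) \geq \lambda$ absorbs $C_r$ in the $r$-norm bound. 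Each absorbtion costs a positive power of $t_i$, and since $t_i \leq t_0 \leq \hat t_0$, a single small choice of $\hat t_0$ handles every $i$ uniformly.

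To obtain (ii)---that $D_{i+1}$ is a $C^2$ strictly pseudoconvex domain---I invoke \rl{Lem::map_iter} and \rl{Lem::Levi}, which require $\|f_i\|_{B_0, 2} \leq \delta(\rho_0)/(i+1)^2$. Estimate \re{f_m_norm_est} gives $\|f_i\|_{B_0,2} \lesssim |A_i|_{D_i, 3/2}$, and interpolating between the $s$- and $r$-norm bounds yields
$$|A_i|_{D_i, 3/2} \lesssim |A|_{D,r}^{1-\theta}\, t_i^{\,\alpha\theta - \beta(1-\theta)}, \qquad \theta = \frac{r - 3/2}{r - s}.$$
A short calculation shows $\alpha\theta > \beta(1-\theta)$: it reduces to $\alpha(r - 3/2) > \beta(\tfrac12 - \ve_0)$, which follows from $r - s > \alpha + \beta$ (implied by \re{iter_parameter}) together with $\alpha > 1/2 - \ve_0$. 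Hence $\|f_i\|_{B_0,2}$ decays super-geometrically in $i$ (as $t_i = t_0^{d^i}$), easily dominating $(i+1)^{-2}$ once $\hat t_0$ is small. \rl{Lem::Levi} then promotes the $C^2$-closeness of $\rho_{i+1}$ to $\rho_0$ to preservation of strict pseudoconvexity, and the estimates on $g_i = F_i^{-1} - I$ in (i) follow from \rl{Lem::Inv} together with \re{f_m_norm_est}.

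The main obstacle is the bookkeeping: the single constant $\hat t_0$ must absorb $C_r, C_{r,s}, C_s, C^*, |A|_{D,r}$, and $\delta(\rho_0)^{-1}$ uniformly in $i$, while simultaneously the five parameters $(\alpha, \beta, d, \lambda, \gamma)$ must admit a joint solution of \re{iter_parameter} for the given $r$. As $r \to \tfrac{3}{2}^+$, adding conditions 2 and 3 forces $\alpha + \beta d > \tfrac12 + 2\lambda + \beta$, so together with condition 1 one needs $r - s > \alpha d + \beta > \text{something bounded away from 0}$, and the feasible region collapses; in addition the upper-stable constants $C_{r,s}$ may blow up, which accounts for the stated behavior $\hat t_0 \to 0$.
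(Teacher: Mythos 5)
Your proposal follows the paper's proof almost step for step: the same induction with $t_i = t_0^{d^i}$, the same initial smallness verification via $\alpha > \tfrac12$, the same substitution of inductive hypotheses into the estimates of Proposition~\ref{Prop::diffeo_est}, the same mechanism of absorbing $C_{r,s},C_s,C_r$ by $t_i^{-\lambda}$ and $|A|_{D,r}$ by $t_i^{-\gamma}$, and the same appeal to Lemma~\ref{Lem::map_iter} and Lemma~\ref{Lem::Levi} to preserve strict pseudoconvexity. The one place where you genuinely deviate is in bounding $\|f_i\|_{B_0,2}$: the paper applies the estimate \eqref{f_m_norm_est_2}, $\|f_i\|_{B_0,2}\leq |f_i|_{B_0,2+\e_0}\leq C_2\, t_i^{-1/2}|A_i|_{D_i,1+\e_0}\leq C_2\,t_i^{\alpha-\frac12}$, which uses only the $s$-norm inductive bound, whereas you instead apply \eqref{f_m_norm_est} to reduce to $|A_i|_{D_i,\cdot}$ at an intermediate order and then interpolate between the $s$- and $r$-norms. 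Both routes give a positive power of $t_i$, but the paper's is cleaner: it avoids invoking the blowing-up $r$-norm bound entirely and gives the explicit exponent $\alpha-\tfrac12$ that is then fed into the $(i+1)^{-2}$ recursion \eqref{C2_norm_induc}. There is also a small imprecision in your version: controlling the $C^2$ norm of $f_i$ requires the $\La^{2+\e_0}$ Zygmund norm (not the $\La^2$ norm, which is weaker at integer order), hence \eqref{f_m_norm_est} really reduces to $|A_i|_{D_i,3/2+\e_0}$, and the interpolation weight is $\theta = (r-3/2-\e_0)/(r-s)$ rather than $(r-3/2)/(r-s)$. Your positivity check still goes through with this correction, so the gap is cosmetic; but given that the direct $t_i^{-1/2}$ route is already available, the interpolation adds unnecessary moving parts.
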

\begin{proof} 
We prove by induction on $i$. First we prove (i)-(iii) for $i=0$. We will write $A_0 = A$ and $D_0 = D$. 
Fix $s=1+\e_0$, $r > 3/2 $, and set $a_i := |A_i|_{D_i,s}$, $L_i := |A_i|_{D_i,r}$. 
Choose  
\begin{equation} \label{t0hat_Cs_bd}  
  \hat t_0 \leq \frac{1}{(2C_1)^2} < 1, 
\end{equation}
where $C_1$ is given by \rp{Prop::diffeo_est}. In particular we also have 
\[ 
 t_0^{-\yh} |A|_{D_0,s} \leq t_0^{\all -\yh} < 1, \quad 0 < t_0 < \hat t_0.
\]  
Thus the hypothesis of \rp{Prop::diffeo_est} are satisfied for $0 < t_0 < \hat t_0$. 
On $D_0$ we have the homotopy formula $A_0 
= \db P_0 A_0 + Q_0 \db A_0 $.  
Set $F_0 = I + f_0$, where $f_0 = - \Ec_{D_0} S_{t_0} P_0 A_0$ and $\Ec_{D_0}$ is the Rychkov extension operator on $D_0$.  
By \rp{Prop::diffeo_est}, $F_0$ is a diffeomorphism of $B(0,R)$ onto itself, with inverse $F_0^{-1} = I + g_0$, and $g_0$ satisfies the estimate:  
\[
  |g_0|_{B_R,m} \leq C_m |f_0|_{B_R,m}, \quad m > 1. 
\]
This proves part (i) for the case $i=0$. 

Next we verify part (ii) when $i=0$. 
By \re{f_m_norm_est_2} we have 
\begin{equation} \label{f0_C2_norm}      
  \| f_0 \|_{B_0,2 } \leq |f_0 |_{B_0, 2+\e_0} \leq C_2  t_0^{-\yh} |A_0|_{D_0, 1+\e_0}  
  \leq C_2 t_0^{\all -\yh}. 
\end{equation} 
Let $\si = \si(\rho_0)$ be the constant in \re{del_rho_0}, and assume that $\hat t_0$ further satisfies   
\begin{equation} \label{t0hat_C2_cond_i=0} 
  \hht t_0 \leq \left( \frac{\si}{C_2} \right)^{\frac{1}{\all-\yh}}, \quad \all > \yh. 
\end{equation}
Then \re{f0_C2_norm} and \re{t0hat_C2_cond_i=0} together imply that
$\| f_0 \|_{B_0,2} \leq \si$ for $0 < t_0 < \hat t_0$. Set $\rho_1 = \rho_0 \circ F_0^{-1}$ and $D_1 = F_0(D_0) = \{ z \in \Uc: \rho_1 < 0 \}$. 
By \rl{Lem::map_iter}, we get   
\begin{gather*} 
  \| \rho_1 - \rho_0 \|_{\Uc,2} \leq \ve(D_0), \qquad 
\dist(D_1, \pa \Uc) \geq \dist (D_0, \pa \Uc) - C \ve. 
\end{gather*}
This proves (ii) for $i=0$. We note that both $\ve(D_0)$ and $\si(D_0)$ are lower stable.   

We now verify (iii) when $i=0$. On $D_1$, let $A_1$ be the coefficient of the new structure obtained by the push-forward of $F_1$, i.e. $\Span 
\{\pa_{\ov \all} + A_1 \pa_{\all} \} = dF_i|_{\ov D_i} \left[ \Span 
\{ \pa_{\ov \all} + A_0 \pa_{\all} \} \right] $. 
By \rp{Prop::diffeo_est} we have 
\begin{equation} \label{a1_L1_est} 
\begin{gathered}
  a_1 \leq C_{r,s} L_0 t_0^{r-s}  + C_s t_0^{-\yh} a_0^2 \leq C_{r,s} L_0 t_0^{r-s} + C_s 
t_0^{2\all-\yh}; 
  \\ 
  L_1 \leq C_r L_0.   
\end{gathered} 
\end{equation} 
For some fixed $\la >0$, we require the additional assumption on $\hat t_0$: 
\begin{equation} \label{t0hat_min}   
  \hat t_0 \leq \min \left\{ \left( \frac{1}{2C_{r,s}}  \right)^{\frac{1}{\la}}, \left( \frac{1}{2C_r}  \right)^{\frac{1}{\la}}, \left( \frac{1}{2C_s}  \right)^{\frac{1}{\la}} \right\}.  
\end{equation} 
Then for all $0 < t_0 < \hat t_0$, we have $C_{r,s}, C_s, C_r \leq 
\yh t_0^{-\la}$. For $\gm >0$, we further require  
\begin{equation} \label{t0hat_L0_bd} 
  \hat t_0 \leq \left( \frac{1}{L_0} \right)^{\frac{1}{\gm}}, \quad L_0:= |A_0|_{D_0,r},
\end{equation} 
so that $L_0 \leq t_0^{-\gm}$ for $0 < t_0 < \hat t_0$. Hence we get from \re{a1_L1_est} that 
\begin{gather*}
  a_1 \leq \yh ( t_0^{r-s-\gm-\la} + t_0^{2\all-\yh-\la} ) \leq t_0^{d \all} = t_1^\all ; 
  \\ 
  L_1 \leq t_0^{-\la} L_0 \leq t_0^{-\beta d} L_0 = t_1^{-\beta} L_0, 
\end{gather*}
where we have assumed the following constraints:
\begin{equation} \label{range_initial}  
\begin{gathered} 
\all d  < r-s - \gm -\la
\\ 
\all (2-d) > \yh  + \la, 
\qquad 
 \beta d > \la.  
\end{gathered}
\end{equation} 
Thus we have verified (iii) for $i=0$ assuming the intersection of the above constraints is nonempty. We will see in the induction step that this is true provided $r > s + \yh$. 

Now assume that (i) - (iii) hold for some $i-1 \in \N$. We shall verify the induction step. Let 
$t_i = t_{i-1}^d$, where $d \in (1,2)$ is to be specified. Suppose we have found $D_i = F_{i-1} 
(D_{i-1})$ which is still strictly pseudoconvex with $C^2$ boundary. Apply the homotopy formula to get $A_i = \db P_i A_i + Q_i \db A_i$ on $D_i$.
Let $F_i = I + f_i$, where $f_i = - \Ec_{D_i} S_{t_i} P_i A_i$, and $\Ec_{D_i}$ is the Rychkov extension operator on $D_i$. 
Note that we still have $t_i < \frac{1}{(2C_1)^2} <1$ and 
\begin{equation} \label{A_s_norm_ind_ic}   
   t_i^{-\yh} |A_i|_{D_i,s} \leq t_i^{\all-\yh} 
   <1 . 
\end{equation}
Hence we can apply \rp{Prop::diffeo_est} (i) to show that and $F_i$ is a diffeomorphism on $B_0$ and the inverse $I + g_i$ satisfies the estimate 
\[
  |g_i|_{B_0, m} \leq |f_i|_{B_0,m}, \quad m >1. 
\]
This verifies the induction step for part (i).  

Define 
\[
  D_{i+1} = \{ z \in \Uc: \rho_{i+1} (z) <0 \}, 
\]
where $\rho_{i+1} (z) = \rho_i \circ F_i^{-1}$. By \re{f_m_norm_est} and the induction hypothesis for (iii), we have 
\begin{equation} \label{fi_C2_induc} 
  \| f_i \|_{B_0,2} \leq |f_i |_{B_0, 2+\ve'} 
  \leq C_2 t_i^{-\yh} |A_i|_{D_i, 1+\ve'} 
  \leq C_2 t_i^{\all-\yh} = C_2 t_0^{(\all-\yh)d^i},
\end{equation}
where we choose $0<\ve'<\e_0$.  
Now we require that  
\begin{equation} \label{t0hat_C2_norm_induc}  
  C_2 \hat t_0^{(\all-\yh) d^i} \leq \frac{\si}{(i+1)^2}.    
\end{equation}
This has been achieved for $i=0$ by \re{t0hat_C2_cond_i=0}. Suppose \re{t0hat_C2_norm_induc} holds for $i-1$. Then 
\[
  C_2 \hat t_0^{(\all-\yh) d^i} 
  = C_2 \hat t_0^{(\all-\yh) d^{i-1}} \hat t_0^{(\all-\yh) d^{i-1}(d-1)} \leq \frac{\si}{i^2} \hat t_0^{(\all-\yh) d^{i-1}(d-1)} \leq \frac{\si}{(i+1)^2},   
\]
where the last inequality holds for all $i \geq 1$. 
Therefore by \re{fi_C2_induc} and \re{t0hat_C2_norm_induc}, we have $\| f_i \|_{B_0,2} \leq \frac{\si}{(i+1)^2}$. 
It then follows from \rp{Lem::map_iter} (i) that 
\begin{equation} \label{rho_i_induc_est}  
    \| \rho_{i+1} - \rho_0 \|_{\Uc, 2} \leq \ve(D_0), \qquad \dist(D_{i+1}, \pa \U) \geq \dist (D_0, \pa \U) - C\ve.  
\end{equation}
This shows that $D_{i+1}$ is still a strictly pseudoconvex domain with $C^2$ boundary, and we have verified the induction step for part (ii). In addition, \re{rho_i_induc_est} with our choice of 
$\ve(D_0)$ allows us to apply \rp{Prop::diffeo_est} with all the constants independent of $i \in \N$. 

Next, we verify the induction step for (iii). On $D_{i+1}$, let $A_{i+1}$ be the coefficient matrix  such that 
$\{\pa_{\ov \all} + A_{i+1} \pa_{\all} \} = dF_i|_{\ov D_i} \left[ \Span 
\{ \pa_{\ov \all} + A_i \pa_{\all} \} \right] $.
Apply \rp{Prop::diffeo_est} to get: 
\begin{gather*}
  a_{i+1} \leq C_{r,s} t_i^{r-s} L_i + C_s t_i^{-\yh} a_i^2 
  \leq C_{r,s} L_0 t_i^{r-s -\beta} + C_s t_i^{2 \all-\yh} ;  
  \\ 
  L_{i+1} \leq C_r L_i \leq C_r L_0 t_i^{-\beta},   
\end{gather*}
where we used the induction hypothesis $a_i \leq t_i^\all$ and $L_i \leq L_0 t_i^{-\beta}$. 
Notice that by the condition \re{t0hat_min}, we still have $C_{r,s}, C_s, C_r \leq \yh t_i^{-\la}$ since $t_i < t_0$. Similarly condition \re{t0hat_L0_bd} implies that $L_0 \leq t_i^{-\gm}$. Hence
\begin{gather*}
  a_{i+1}
\leq \yh ( t_i^{r-s} t_i^{-\la - \gm -\beta} 
  + t_i^{-\la} t_i^{-\yh} t_i^{2\all} ) 
  \leq t_i^{d \all} = t_{i+1}^\all, 
  \\ 
  L_{i+1} \leq t_i^{-\la} L_i \leq t_i^{-\la - \beta} L_0 
  \leq t_i^{-d\beta}L_0 = t_{i+1}^{-\beta} L_0, 
\end{gather*} 
 where we have assumed 
\begin{equation} \label{range_induc}    
 \begin{gathered}
 \all d + \beta < r -s - \la - \gm, 
              \\ 
 \all (2-d) > \yh + \la, 
\qquad 
 \beta > \frac{\la}{d-1}. 
\end{gathered} 
\end{equation}
Notice that the above constraint is more strict than \re{range_initial}. 
Let $\Dc(r,s, d)$ be the set of $(\all,\beta, \gm, \la)$ such that \re{range_induc} is satisfied. We now determine the values of $r,s, d$ such that $\Dc(r,s,d)$ is non-empty. Consider the limiting domain of $\all$ for fixed $r,s, d$ and $\beta, \la, \gm =0$: 
\[
  \Dc_\ast(r,s, d) = \left\{ \all \in (0,\infty): \all d < r-s, \quad \all(2-d) > \yh, \quad \all > \yh \right\}.  
\]  
Hence $\Dc_\ast(r,s,d)$ is non-empty if and only if 
\[
  r-s > p(d), \quad p(d) := \frac{d}{2(2-d)}. 
\]
On the interval $(1,2)$, $p$ is a strictly increasing function with infimum value $p(1)= \yh$. This implies that
\begin{equation} \label{rs_range}
  r-s > p(1) = \yh, \quad r > s + \yh > \frac{3}{2}  \quad (\text{since $s>1$}).  
\end{equation} 
Notice that under the above condition for $r,s$, $\Dc(r,s, d)$ is still non-empty for sufficiently small $\beta, \la, \gm$. In summary, given $r= \frac{3}{2} + \ti \e_0$, we first choose $s=1+\e_0$ for $0 < \e_0< \ti \e_0$, such that \re{rs_range} is satisfied. This is possible by choosing $d \in (1,2)$ sufficiently close to $1$. We then choose $(\all,\beta,\gm,\la) \in \Dc(r,s, d)$ with $\all $ close to $1/2$, and $\beta, \la,\gm$ sufficiently small positive number, such that \re{range_induc} holds. In view of \re{t0hat_C2_cond_i=0}, \re{t0hat_min}, \re{t0hat_L0_bd} and \re{t0hat_C2_norm_induc}, $\hat t_0$ needs to be chosen sufficiently small. In other words, $\hat t_0 \to 0$ as $r \to \frac{3}{2}^{+}$. On the other hand, we observe that the constants $C_1, C_2, C_s, C_r, C_{r,s}$ showing up in \re{t0hat_Cs_bd}, \re{t0hat_C2_cond_i=0}, \re{t0hat_min} and \re{t0hat_C2_norm_induc} are upper stable, and the constants $\si$ in \re{t0hat_C2_norm_induc} is lower stable, we conclude that $\hat t_0$ is lower stable under small $C^2$ perturbation of the domain, once we fix $r>\frac32$ and $|A|_{D_0,r}$. 
\end{proof} 

The following consequence of the above result is what is actually used in the proof of \rt{Thm::intro_main_thm}. 
\begin{cor} \label{Cor::A_m_norm_est}  
   Let $s=1+\e_0$ and $r_0 = \frac32 + \ti \e_0$, where $\e_0,\ti \e_0$ are small positive constants satisfying $0 < \e_0 < 
\ti \e_0$. Let $D$ be a strictly pseudoconvex domain with a $C^2$ defining function $\rho_0$ on $\Uc$ and $ \{ X_{\ov \all} \}_{\all=1}^n = \{ \pa_{\ov \all} + A_{\ov \all}^\beta \pa_{\beta} \}_{\all=1}^n \in \La^{r_0}(\ov D)$ be a formally integrable almost complex structure. For each $i \in \N$, let $F_i= I +f_i, D_i, A_i$ be given as in the proof of \rp{Prop::iter}.  
There exist $\del_0= \del_0(\ti \e_0, |A|_{D, r_0}, \si(\rho_0), \ve(D))$, $1<d<2$, $\all > \yh$, $\eta >0$, and a constant $N = N(m,d)$ such that if $|A|_{D,s} < \del_0$, then 
\begin{gather*}
  |A_i|_{D_i,s} \leq t_i^\all, 
 \quad \text{for all $i \in \N$}; 
\\ 
|A_i|_{D_i,m}\leq |A_N|_{D_N,m} t_i^{-\eta}, \quad \text{for all $i > N $}.    
\end{gather*} 
Here $t_{i+1} =t_i^d$ and $\eta$ is independent of $m$. The constant $\del_0$ needs to converge to $0$ as $\ti \e_0 \to 0$, and $\del_0$ is lower stable under small $C^2$ perturbation of the domain. 
\end{cor}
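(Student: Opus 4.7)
The plan is to apply Proposition~\ref{Prop::iter} with $r=r_0$ for the first estimate, and to iterate Proposition~\ref{Prop::diffeo_est}~(ii) against the super-exponential decay of $t_i$ for the second. The hypothesis $\e_0<\ti\e_0$ gives $r_0-s=\frac12+(\ti\e_0-\e_0)>\frac12$, which is precisely the condition identified in the proof of Proposition~\ref{Prop::iter} for the constraint set $\Dc(r_0,s,d,\la,\gm)$ to be nonempty. I would choose admissible parameters $(\all,\beta,d,\la,\gm)$ (so in particular $\all>\frac12$ and $d\in(1,2)$), let $\hat t_0=\hat t_0(\ti\e_0,|A|_{D,r_0},D)$ be the corresponding threshold supplied by that proposition (which by its proof vanishes as $\ti\e_0\to 0$), and set $\hat\ve:=\hat t_0^{\all}$. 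Given $|A|_{D,s}<\hat\ve$, define $t_0:=|A|_{D,s}^{1/\all}\le\hat t_0$ and $t_{i+1}:=t_i^d$; then $|A|_{D,s}\le t_0^{\all}$, so Proposition~\ref{Prop::iter} produces $F_i,D_i,A_i$ with $|A_i|_{D_i,s}\leq t_i^{\all}$, which is the first claimed estimate. The required vanishing of $\hat\ve$ as $\ti\e_0\to 0$ is inherited directly from the corresponding property of $\hat t_0$.

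For the $m$-norm with arbitrary $m>1$, I first verify that the smallness condition of Proposition~\ref{Prop::diffeo_est} is preserved along the iteration: since $\all>\frac12$ and $t_i\le\hat t_0$,
\[
 t_i^{-1/2}|A_i|_{D_i,s}\leq t_i^{\all-1/2}\leq\hat t_0^{\all-1/2}\leq 1/C^\ast
\]
after possibly shrinking $\hat t_0$ one more time. Proposition~\ref{Prop::diffeo_est}~(ii) then gives $|A_{i+1}|_{D_{i+1},m}\leq C_m|A_i|_{D_i,m}$ for every $i$, and iterating from an index $N$ yields $|A_i|_{D_i,m}\leq C_m^{\,i-N}|A_N|_{D_N,m}$ for all $i>N$. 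Fix any $\eta>0$ (for instance $\eta=1$); since $t_i=t_0^{d^i}$, the desired bound $C_m^{i-N}\leq t_i^{-\eta}$ is equivalent to $(i-N)\log C_m\leq \eta\, d^{i}|\log t_0|$, which by the elementary inequality $\sup_{j\geq 1}(j/d^j)\leq 1/(e\log d)$ holds uniformly in $i>N$ provided $d^N$ exceeds a constant depending on $m,d,t_0,\eta$. This defines $N=N(m,d)$, and the key observation is that $\eta$ is independent of $m$ because the entire $m$-dependence is absorbed into $N$.

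The only delicate point is bookkeeping: $\hat\ve$ must be taken small enough to simultaneously (a) meet Proposition~\ref{Prop::iter}'s threshold $\hat t_0$, and (b) preserve, via the bound $\all>\frac12$, the smallness condition required by Proposition~\ref{Prop::diffeo_est} at every step. There is no deeper obstacle, because the super-exponential decay $t_i=t_0^{d^i}$ with $d>1$ beats any geometric growth $C_m^i$ once $i$ is large. That asymptotic dominance is the entire mechanism that allows the arbitrary-$m$ statement to follow from a single application of Proposition~\ref{Prop::iter} tuned to the low regularity $r=r_0$.
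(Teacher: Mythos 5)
Your proposal is correct and follows essentially the same strategy as the paper: invoke Proposition~\ref{Prop::iter} with $r=r_0$ to obtain the $s$-norm estimate and the setup, then iterate the bound $|A_{i+1}|_{D_{i+1},m}\leq C_m|A_i|_{D_i,m}$ from Proposition~\ref{Prop::diffeo_est} and absorb the $m$-dependence into the starting index $N$, exploiting the super-exponential decay $t_i=t_0^{d^i}$. The only cosmetic difference is that the paper proves $M_i\leq M_N t_i^{-\eta}$ by induction (via an auxiliary $\la$ with $C_m\leq t_i^{-\la}$ for $i\geq N$ and the constraint $\eta(d-1)>\la$), whereas you verify $C_m^{i-N}\leq t_i^{-\eta}$ directly through the elementary bound $\sup_{j\geq1} j/d^j\leq 1/(e\log d)$; both close the same gap.
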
  
\begin{proof}
Denote $A = A_0$. We apply \rp{Prop::iter} with $r =r_0 = \frac32 + \ti \e_0$ and choose the parameters $\all,\beta, d, \la$ satisfying the conditions in \re{iter_parameter}. Then there exists a constant $\del_0= \del_0(\ti \e_0, |A|_{D,r_0}), \si(\rho_0), \ve(D)) $, and a sequence $\{ t_i \}_{i \in \N}$ such that if $|A|_{1+\e_0} < \del_0 = t_0^\all$, then for all $i \in \N$, we have $|A_i|_{D_i,s} 
\leq t_i^\all$ with $t_{i+1} = 
t_i^d$, $\all > \yh$, $1<d<2$. The constant 
$\del_0$ tends to 0 as $\ti \e_0$ and it is lower stable under small $C^2$ perturbation of the domain. Denote $M_i:= |A_i|_{D_i,m}$, for $m > 1$. 
By \rp{Prop::diffeo_est}, we have the estimate \re{A'_est} 
\[
   M_{i+1} \leq C_m M_i, \quad i \in \N. 
\]
Fix $\la >0$. One can find a large $N = N(m,d) \in \N$ such that
\begin{equation} \label{Cm_bd_i>N}
  C_m \leq t_i^{-\la}, \quad \text{for all $i \geq N$.} 
\end{equation}
We would like to show that there exists $\eta>0$, independent of $m$, such that 
for all $i \geq N$, the following holds 
\[
  M_i \leq M_N t_i^{-\eta}, \quad i \geq N. 
\] 
For $i =N$, the above inequality is obvious. Assume it holds for some $i \geq N$.  
Then 
\[
  M_{i+1} \leq C_m M_i \leq t_i^{-\la -\eta} M_N
  \leq t_i^{-d \eta} M_N = t_{i+1}^{-\eta} M_N, \quad i \geq N, 
\] 
where the third inequality in the line above holds if we choose 
\[
  \eta(d-1) > \la.  \qedhere
\] 
\end{proof} 
We are now ready to prove \rt{Thm::intro_main_thm}. 
\begin{prop} \label{Prop::main}  
  Let $D$ be a strictly pseudoconvex domain with $C^2$ boundary in $\C^n$ and let $\{ X_{\ov \all} = \pa_{\ov \all} + A^\beta_{\ov \all} \pa_\beta \}_{\all=1}^n$ be a formally integrable almost complex structure on $D$ of the class $ \La^{\frac32+\ti \e_0}(\ov D)$, for any small $\ti \e_0>0$. Fix $\e_0$ with $0< \e_0 < \ti \e_0$. There exists an 
\[
\del_0= \del_0\left( \ti \e_0,|A|_{D,\frac32+ \ti \e_0}, \si(\rho_0), \ve(D) \right) 
\]
such that if $|A|_{D,1+\e_0} \leq \del_0$,  
then the following statements are true. 
  \begin{enumerate}[(i)]
  \item
  Let $A \in \La^m(\ov D)$, with $m > \frac{3}{2} + \ti \e_0$. There exists a $C^1$ diffeomorphism $F: \ov D \to \C^n $ such that if $A \in \La^m(\ov D)$, $m > \frac{3}{2} + \ti \e_0$, then $F \in \La^{m+\yh^{-}}(\ov {D})$.     
  \item 
    If $A \in C^\infty(\ov D)$, then $F \in C^\infty(\ov D)$. 
  \item 
  $dF (X_{\ov \all})$ are in the span of $
  \{\pa_{\ov 1}, \dots, \pa_{\ov n} \}$ and $F(\ov{D})$ is strictly pseudoconvex. 
  \end{enumerate}
The constant $\del_0$ needs to converge to $0$ as $\ti \e_0 \to 0$, but is independent of $m$ away from $3/2$. Furthermore, $\del_0$ is lower stable under small $C^2$ perturbations of the domain. 
\end{prop}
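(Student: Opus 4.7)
The plan is to invoke Proposition~\ref{Prop::iter} and Corollary~\ref{Cor::A_m_norm_est} with $r_0 = 3/2 + \tilde\e_0$ to produce the sequence of domains $D_i$, diffeomorphisms $F_i = I + f_i$ on $B_0$, and perturbations $A_i$ satisfying $|A_i|_{D_i,s} \leq t_i^\all$ together with $|A_i|_{D_i,m} \leq M_N t_i^{-\eta}$ for $i \geq N(m,d)$, where $t_{i+1} = t_i^d$ with $1<d<2$ and $\eta$ is independent of $m$. Define $\wti F_j := F_j \circ F_{j-1} \circ \cdots \circ F_0$ and $\wti G_j = \wti F_j^{-1}$. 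The main task is to prove convergence of $\wti F_j$ in the target norm $\La^{m+\yh^-}(\ov D)$ (and in every $\La^s$ in the $C^\infty$ case), while the limiting structure $F_\ast J$ coincides with $J_{st}$.

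First, I would establish convergence of the intermediate norms of $A_i$ by interpolation: for any $s < m' < m$, convexity (\ref{H-Z_convexity}) applied to the bounds $|A_i|_{D_i,s} \leq t_i^\all$ and $|A_i|_{D_i,m} \leq C_m t_i^{-\eta}$ gives
\[
 |A_i|_{D_i,m'} \leq C_m t_i^{\all \thh - \eta(1-\thh)}, \qquad m' = \thh s + (1-\thh) m,
\]
which converges to $0$ as $i \to \infty$ provided $\thh$ is close enough to $1$, i.e.\ provided $m'$ is close enough to $s$. Next, combining the estimates \re{f_m_norm_est} and \re{g_m_norm_est} with \rp{Prop::SmOp}, the maps $f_i$ and $g_i$ satisfy
\[
|f_i|_{B_0, m''+\yh-\ve} + |g_i|_{B_0, m''+\yh-\ve} \leq C_{m'',\ve} |A_i|_{D_i, m''},
\]
so interpolation transfers the decay of $|A_i|_{D_i,m''}$ to a decay of $|f_i|_{B_0,m''+\yh-\ve}$ for any $m'' \in (s, m)$ sufficiently close to $s$. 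In particular, for $m'' = s$ we obtain $\sum_i |f_i|_{B_0,s+\yh-\ve}$ summable with geometric rate, which together with \rl{Lem::map_iter} already gives $C^2$ convergence of $\wti F_j$ to a $C^2$ diffeomorphism $F$ on $B_0$, whose restriction to $\ov D$ gives a map into $\C^n$, and preserves strict pseudoconvexity of the boundary as $\|\rho_j - \rho_0\|_{\Uc,2} \leq \ve(D_0)$.

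To upgrade to $\La^{m+\yh-\ve}$ regularity, I would use the composition estimate \re{comp_est_H-Z_norm}. Writing $\wti F_{j+1} - \wti F_j = f_{j+1} \circ \wti F_j$, the key is to bound $|\wti F_{j+1} - \wti F_j|_{D_0, m+\yh-\ve}$ by $C_m^j$ times a sum involving $|f_{j+1}|_{B_0, m+\yh-\ve}$ and $|f_i|_{B_0,1+\ve}$-type quantities. The low-norm factors $\|f_i\|_{1+\ve}$ already have summable geometric decay; the high-norm factor $|f_{j+1}|_{m+\yh-\ve}$ is controlled by $C_m |A_{j+1}|_{D_{j+1}, m-\ve'}$, and by interpolation between $|A_i|_{D_i,s} \leq t_i^\all$ (decaying) and $|A_i|_{D_i,m+\kappa}$ (blowing up like $t_i^{-\eta}$, with $\kappa > 0$ chosen so that $m-\ve'$ lies strictly below $m+\kappa$), this intermediate norm still decays geometrically in $i$ provided $\ve'$ and $\ve$ are chosen positive but small. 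Summing the telescoping series then gives convergence of $\wti F_j$ in $\La^{m+\yh-\ve}(\ov D)$. The $C^\infty$ case follows by running the same argument for every $m$.

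Finally, I would verify statement (iii). Since $A_i \to 0$ in $C^1$ and each $A_{i+1}$ is obtained from $A_i$ by the push-forward under $F_i$, the push-forward of the initial structure under $\wti F_j$ is $\{X_{\ov\all}^{(j)} = \pa_{\ov\all} + A^{(j)} \pa_\all\}$ with $A^{(j)} = A_{j+1} \to 0$ uniformly, so the limiting push-forward $dF(X_{\ov\all})$ lies in $\Span\{\pa_{\ov 1},\dots,\pa_{\ov n}\}$ as claimed. Strict pseudoconvexity of $F(\ov D)$ follows from \rl{Lem::Levi} applied to the $C^2$-convergence of $\rho_j$ to some limiting defining function. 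The main obstacle is the bookkeeping in the interpolation argument for the high norm: one must simultaneously track the growth factor $C_m^j$ from the iterated chain rule and the decay coming from $|f_i|_{1+\ve}$, and choose the interpolation parameter carefully so that the geometric decay wins. The $\ve$-loss in the final regularity $m+\yh-\ve$ is intrinsic to this interpolation, matching the $\yh^-$ statement of the theorem.
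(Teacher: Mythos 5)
Your overall strategy matches the paper's: invoke Proposition~\ref{Prop::iter} and Corollary~\ref{Cor::A_m_norm_est}, interpolate between the decaying $s$-norm and the slowly blowing-up $m$-norm of $A_i$, transfer this to decay of $|f_i|$ at level $\ell+\yh$ with $\ell$ close to $m$ via the $\yh$-gain of the homotopy operator, and then sum a telescoping series using Lemma~\ref{Lem::comp_long}. Two points in your write-up, however, do not hold as stated.

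First, you claim that summability of $\sum_i |f_i|_{B_0,s+\yh-\ve}$ already gives $C^2$ convergence of $\wti F_j$ via Lemma~\ref{Lem::map_iter}. But $s+\yh-\ve = \frac{3}{2}+\e_0-\ve < 2$, so control in $\La^{s+\yh-\ve}$ does \emph{not} control the $C^2$ norm of $f_i$, which is what Lemma~\ref{Lem::map_iter} requires (the hypothesis is $\|f_j\|_{B_0,2} \leq \delta/(j+1)^2$). The correct source of the $C^2$ bound is the alternative smoothing estimate \eqref{f_m_norm_est_2}, which trades a $t^{-\yh}$ loss for a full derivative gain: $\|f_i\|_{B_0,2} \lesssim t_i^{-\yh} |A_i|_{D_i,1+\e_0} \leq t_i^{\all-\yh}$, and $\all>\yh$ ensures decay. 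This is carried out in the proof of Proposition~\ref{Prop::iter} (see \eqref{fi_C2_induc}--\eqref{C2_norm_induc}); since you invoke that proposition, the conclusion you want is available, but the interpolation mechanism you describe is not what produces it.

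Second, your high-norm interpolation invokes a bound on $|A_i|_{D_i,m+\kappa}$ with $\kappa>0$, which is not available: the hypothesis is only $A\in\La^m(\ov D)$, so Corollary~\ref{Cor::A_m_norm_est} yields a bound at level $m$ and nothing above it. Fortunately $\kappa$ is unnecessary -- interpolating between the $s$-norm and the $m$-norm already places $m-\ve'$ strictly in the interior of the interpolation range for any $\ve'>0$. The paper does exactly this (interpolating $|f_i|_{\ell+\yh}$ between $|f_i|_{s+\yh}$ and $|f_i|_{m+\yh}$ with $\ell=(1-\thh)s+\thh m$ and $\thh$ constrained by $\thh<\frac{\all}{\all+\eta}$, then tuning $\la,\eta$ small so $\thh$ can approach $1$), and this is where the unavoidable $\ve$-loss in $m+\yh^-$ comes from. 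With these two corrections your proposal coincides with the paper's argument.
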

\begin{proof}
We will write $s=1+\e_0$ and denote $A_0 = A$ and $D_0 = D$. For each $i \in \N$, let $F_i=I+f_i, D_i, A_i$ be given as in the proof of \rp{Prop::iter}.  

Write $l = (1-\thh)s + \thh m$, where $s < l < m$ and $\thh \in (0,1)$ is to be chosen. By \rc{Cor::A_m_norm_est}, there exist $ \del_0= \del_0( \ti \e_0,|A|_{D,\frac32+ \ti \e_0}, \si(\rho_0), \ve(D) ), d \in (1,2)$, $\all >\yh$, $\eta >0$, $ \{t_i\}_{i=0}^\infty$ such that if $|A|_{D,1+\e_0} < \del_0= t_0^\all$, then 
\begin{equation} \label{Ai_sm_norm_est}  
\begin{gathered} 
   |A_i|_{D_i,1+\e_0} \leq t_i^{\all}, \quad i \in \N; 
   \\ 
   |A_i|_{D_i,m} \leq  t_i^{-\eta} M_N , \quad t_{i+1} = t_i^d, \quad i \geq N = N(m,d),  
\end{gathered} 
\end{equation}
where we denote $M_N:= |A_N|_{D_N,m} $. 
Here $\eta$ satisfies the condition
\begin{equation} \label{yh_est_eta_lb} 
   \eta > \frac{\la}{d-1}, 
\end{equation}
where $\la>0$ is some fixed constant for which 
\begin{equation} \label{yh_est_Cm_bd} 
  C_m \leq t_i^{-\la}, \quad i \geq N=N(m,d). 
\end{equation} 
Here we point out the crucial fact that the constant $\del_0$ in the smallness assumption of 
$|A|_{D,1+\e_0}$ does not depend on $\eta$ and $\la$, since \re{yh_est_Cm_bd} can always be satisfied for any $\la>0$ by choosing $N$ sufficiently large without making $t_0$ small. 

Using convexity of H\"older-Zygmund norm \re{H-Z_convexity} and estimate \re{f_m_norm_est}, we have for all $i \geq N$, 
\begin{equation} \label{fi_l+yh_norm}     
   |f_i|_{D_i, \ell+\yh} \leq C_{m,s} |f_i|^{1-\thh}_{D_i,s+\yh} |f_i|^\thh_{D_i,m+\yh}
\leq C'_{m,s} |A_i|_{D_i,s}^{1-\thh} |A_i|_{D_i,m}^\thh 
\leq C'_{m,s} M_N t_i^{(1-\thh)\all-\thh \eta}.    
\end{equation}
 
Consider the composition map $\wti F_j = F_j \circ F_{j-1} \circ \cdots \circ F_0$, where $F_j = I + f_j$ for $j \geq 0$. By using \rl{Lem::comp_long} and above estimate for $f_j$, we obtain for all $j \geq N$,   
\begin{equation} \label{Fj_ti_diff} 
\begin{aligned}
  | \wti F_j - \wti F_{j-1} |_{D_0, \ell+\yh}
  &= | f_j \circ F_{j-1} \circ \cdots F_0 |_{D_0, \ell+\yh} 
  \\ &\leq C_\ell^j \left\{ | f_j |_{\ell+\yh} + \sum_{0 \leq i\leq j} \left( | f_j |_{1+\e_0} | f_i |_{\ell+\yh} + C_{1/\e_0} | f_j |_{\ell+\yh} | f_i |_{1+\e_0} \right) \right\}. 
\end{aligned}   
\end{equation}
Set $\mu:=(1-\thh)\all- \thh \eta$. By choosing $0<\thh<\frac{\all}{\all + \eta} < 1$, we have $\mu>0$. 
For the first sum in the last line, we have  
\begin{align*}
  \sum_{0 \leq i \leq j} |f_j|_{1+\e_0} |f_i|_{\ell +\yh} 
&\leq |A_j|_{D_j,\yh+\e_0}  C_{m,s} M_N \sum_{0 \leq i \leq j} t_i^{(1-\thh)\all-\thh \eta}  
\leq C_{m,s} M_N t_j^{\all}. 
\end{align*} 
And the second sum in \re{Fj_ti_diff} is bounded by 
\[
 \sum_{0 \leq i \leq j} |f_j|_{\ell+\yh} |f_i|_{1+ \e_0} \leq C_{m,s} M_N t_j^{(1-\thh)\all - \thh \eta} \sum_{0 \leq i \leq j} t_i^{\all} 
 \leq C'_{m,s} M_N t_j^\mu. 
\] 
It follows from \re{fi_l+yh_norm} and \re{Fj_ti_diff} that for all $j \geq N$,    
\begin{equation} \label{ti_Fi_diff}
\begin{aligned} 
  | \wti F_j - \wti F_{j-1} |_{D_0, \ell+\yh}
&\leq C_\ell^j C_{m,s} M_N t_j^\mu 
=C_{m,s} M_N C_\ell^j  t_0^{d^j \mu},  
\end{aligned} 
\end{equation} 
 As a consequence, $\wti F_j$ is a Cauchy sequence in $\La^{\ell+\yh}(\ov D_0)$ when $j$ is sufficiently large. This can be seen by writing
\begin{align*}
   C_\ell^j t_0^{d^j \mu} 
&= C_\ell^j (t_0^\mu)^{d^{\frac{j}{2}} d^{\frac{j}{2}} }  = C_\ell^j \left[ (t_0^\mu)^{d^{\frac{j}{2}}} \right]^{d^{\frac{j}{2}}} \leq C_\ell^j \left[ (t_0^\mu)^{d\frac{j}{2}} \right]^{d\frac{j}{2}}  
\\ & = ( C_\ell^2)^{\frac{j}{2}}  \left[ \left( t_0^{\mu d}\right)^{\frac{j}{2}} \right]^{d\frac{j}{2}} 
\leq \left[ C_\ell^2 \left(t_0^{\frac{\mu d
}{2}}\right)^j \right]^{d\frac{j}{2}} . 
\end{align*}
For fixed $t_0, d, \mu$, by choosing $j> N(\ell,d)$, we can make the expression inside the bracket less than $1$, and thus $\wti F_j$ is a Cauchy sequence. 

We now make a summary and indicate the way the parameters are chosen. The constants $\all, d$ are first chosen to apply \rc{Cor::A_m_norm_est} and obtain estimate \re{Ai_sm_norm_est}. 
For any fixed $\ve>0$, in view of \re{yh_est_eta_lb}, we can choose $\thh_\ast = \thh_\ast(\ve)$ close to $1$, such that $l = s+\thh_\ast (m-s) > m-\ve$. We then choose $\eta_\ast = \eta_\ast (\ve)$ small such that $\frac{\all}{\all+\eta_\ast} > \thh_\ast$, which makes $\mu = (1-\thh_\ast) \all - \thh_\ast \eta_\ast$ positive. 

By \re{yh_est_eta_lb} this in turns requires that we choose $\la_\ast = \la_\ast(\ve)$ small so that $\eta_\ast > \frac{\la_\ast}{d-1}$, while $d$ has been fixed. Finally we choose $N = N(m,d,\ve)$ to be large such that \re{yh_est_Cm_bd} is satisfied. This shows that $\wti F_j$ is a Cauchy sequence in $\La^{m+\yh-\ve}(\ov{D_0})$ if $j > N(m,d,\ve)$. 
In other words, we have shown that if $A \in \La^m(\ov D)$ and $|A|_{1+\e_0} \leq \del_0$, then $\wti F_j$ converges to some limit $F \in \La^{m+\yh-\ve}(\ov{D})$ for any $\ve>0$, and $\del_0$ is independent of $\ve$.  
\\[10pt]  
(ii) By assumption we have $A \in \La^m(\ov D_0)$ for any $m>1$. 
The same argument as in (i) shows that $\{\wti F_j\}$ is a Cauchy sequence in $\La^{m + \yh-\ve}(\ov{D_0})$ for $j > N(m,d,\ve)$.   
Since this holds for all $m$, we have $F \in C^\infty(\ov{D_0})$. 

We now show that $F = \lim_{j \to \infty} \wti F_j$ is a diffeomorphism on $B_0$. By the inverse function theorem, it suffices to check that the Jacobian of $F(x)$ is invertible at every $x \in B_0$.  
Write $DF= I - (I-DF)$, then $DF$ is invertible with 
inverse $(I-(I-DF))^{-1}$ if $\|DF-I\|_{B_0,0} < 1$. Write 
\begin{align*}
 DF-I &= \lim_{j \to \infty} D \wti F_{j}  - I
\\ &= \lim_{j \to \infty} [D \wti F_{j} - D \wti F_{j-1} ]
+ [D \wti F_{j-1} - D \wti F_{j-2} ] + \cdots [D \wti F_1 - D \wti F_0] 
+ [D \wti F_0 - D \wti F_{-1}] ,   
\end{align*}
where we set $\wti F_{-1}$ to be the identity map and thus $D \wti F_{-1} = I $. 
Then 
\[
  \| DF- I \|_{B_0,0} \leq \sum_{j=0}^\infty 
  \|D \wti F_{j} - D \wti F_{j-1} \|_{B_0,0} \leq \sum_{j=0}^\infty \| \wti F_j - \wti F_{j-1} \|_{B_0,1}.
\]
Using \re{comp_est_Holder}, we get 
\begin{align*}
\| \wti F_j - \wti F_{j-1} \|_{B_0, 1} 
  &= \| f_j \circ F_{j-1} \circ \cdots F_0 
  \|_{B_0, 1}
  \\ &\leq C_1^j \left( \| f_j \|_{B_0,1} +  \| f_j \|_{B_0,1} \sum_{0 \leq i \leq j -1} \| f_i \|_{B_0,1} \right) \\ &\leq C_1^j \| f_j \|_{B_0,1} \left( 1+ \sum_{0 \leq i \leq j-1} t_i^\all \right) 
\leq C_1^j t_j^\all,  
\end{align*}
where we use the estimate $\| f_j \|_{B_0,1} = 
\| \Ec_{D_j} S_{t_j} P_j A_j \|_{B_0,1} \leq C_1 |A_j|_{D_j,1+\e_0} \leq C_1 t_j^\all$. 

Hence we have
\begin{align*}
  \| DF- I \|_{B_0,0} 
&\leq \sum_{j=1}^\infty \| \wti F_j - \wti F_{j-1} \|_{B_0,1}
\leq \sum_{j=1}^\infty C_1^j t_j^\all 
 = \sum_{j=1}^\infty C_1^j t_0^{d^j \all} 
\end{align*} 
The last expression converges to a number less than $1$ if we choose $t_0$ to be smaller than a constant depending on $d, \all, C_1$. 
\\[10pt]  
(iii) 
By \rl{Lem::A_choc}, $d \wti F_j (X_{\ov \all})= dF_j \cdots dF_0 (X_{\ov \all}) \in \Span \{\pa_{\ov \all} + (A_{j+1})_{\ov \all}^\beta \pa_\beta \} $. Since $dF(X_{\ov \all}) = \lim_{j\to \infty} d \wti F_j (X_{\ov \all}) $ and $\lim_{j \to \infty} |A_j|_{D_j, s} \leq \lim_{j\to \infty} t_j^\all = 0$, we have $d \wti F (X_{\ov \all}) = \Span \{\pa_{\ov \all} \}$ on $F(D_0)$. Let $\rho_0$ be the defining function for $D_0$, and for $j \geq 0$, let $\rho_{j+1} = \rho_j \circ F_j^{-1}$ be the defining function for $D_{j+1} = F_j (D_j)$. We have shown in the proof of 
\rp{Prop::iter} that 
\[
  \| f_j \|_{B_0,2} \leq \frac{\si(\rho_0)}{(j+1)^2}, \quad j=0,1,2,\dots. 
\]
Therefore by \rl{Lem::map_iter}, we have $\| \rho_j - \rho_0 \|_{\Uc,2 } \leq \ve(D_0) $ for all $j \in \N$ and $\rho_j$ converges in the $C^2$ norm on $B_0$ to $\rho$, with $F(D_0) = \{ z \in \Uc, \rho(z)<0 \}$. By our choice of $\ve(D_0)$ (see the remark after \rl{Lem::Levi}), $F(D_0)$ is strictly pseudoconvex. 
\end{proof}

\bigskip 
\bibliographystyle{amsalpha}
\bibliography{reference} 

\end{document}